\definecolor{myblue}{rgb}{0.2,0,0.9}
\definecolor{blue-violet}{rgb}{0.54, 0.17, 0.89}
\pgfplotsset{compat=1.12}
\definecolor{myblue}{rgb}{0.2,0,0.9}
\definecolor{blue_violet}{rgb}{0.54, 0.17, 0.89}
\definecolor{darkgreen}{rgb}{0,0.35,0}
\DeclareRobustCommand*\cal{\@fontswitch\relax\mathcal}
\newtheorem{thm}{Theorem}[section]
\newtheorem{pro}[thm]{Proposition}
\newtheorem{cor}[thm]{Corollary}
\newtheorem{lem}[thm]{Lemma}
\numberwithin{equation}{section}
\theoremstyle{definition}
\newtheorem{rem}[thm]{Remark}
\newtheorem{as}[thm]{Assumption}
\RenewDocumentCommand{\title}{om}{%
	\IfNoValueTF{#1}
	{\gdef\shorttitle{}}
	{\gdef\shorttitle{#1}}%
	\gdef\@title{#2}%
}
\title{Sensitivity of robust optimization problems\\ under drift and volatility uncertainty}
\author{Daniel Bartl}
\address{Faculty of Mathematics, University of Vienna}
\email{daniel.bartl@univie.ac.at}
\author{Ariel Neufeld}
\address{Division of Mathematical Sciences, Nanyang Technological University}
\email{ariel.neufeld@ntu.edu.sg}
\author{Kyunghyun Park}
\address{Division of Mathematical Sciences, Nanyang Technological University}
\email{kyunghyun.park@ntu.edu.sg}
\thanks{\textit{Funding:}
D.~Bartl is grateful for financial support through the Austrian Science Fund (grant DOI: 10.55776/ESP31 and 10.55776/P34743). For open access purposes, the author has applied a CC BY public copyright license to any author accepted manuscript version arising from this submission.
A.~Neufeld gratefully acknowledges financial support by the Nanyang Assistant Professorship Grant (NAP Grant) {\it Machine Learning based Algorithms in Finance and Insurance}.  
K.~Park acknowledges support of the Presidential Postdoctoral Fellowship of Nanyang Technological University.}
\date{\today.}
\begin{document}
\begin{abstract}
	We examine optimization problems in which an investor has the opportunity to trade in $d$ stocks with the goal of maximizing her worst-case cost of cumulative gains and losses. 
	Here, worst-case refers to taking into account all possible drift and volatility processes for the stocks that fall within a $\varepsilon$-neighborhood of predefined fixed baseline processes.
	Although solving the worst-case problem for a fixed  $\varepsilon>0$ is known to be very challenging in general, we show that it can be approximated  as  $\varepsilon\to 0$ by the baseline problem (computed using the baseline processes) in the following sense:
	Firstly, the value of the worst-case problem is equal to the value of the baseline problem plus $\varepsilon$ times a correction term.
	This correction term can be computed explicitly and quantifies how sensitive a given optimization problem is to model uncertainty. 
	Moreover, approximately optimal trading strategies for the worst-case problem can be obtained using optimal strategies from the corresponding baseline problem. 
\medskip

\noindent \textit{Key words.} sensitivity analysis, robust stochastic optimization, model uncertainty, It\^o-semimartingale, backward stochastic differential equations

\medskip

\noindent \textit{MSC classifications.} Primary 90C31, 60G65, 60H05; secondary 91G10
\end{abstract}

\maketitle

\section{Introduction}\label{sec:intro}
Consider a semimartingale  $S=(S_t)_{t\in[0,T]}$ representing the evolution of the value of a  $d$-dimensional (discounted) stock price over time. We assume that a decision maker holds a financial position of the form $l( S_T) $ and aims to hedge against possible losses. 
To that end, she starts with an initial capital $x_0\in\mathbb{R}$ and has the opportunity to buy and sell the stock $S$ without  transaction costs. 
If she invests according to the trading strategy $H$ (i.e., a predictable process), her capital at every time $t\in[0,T]$  equals her initial capital $x_0$ plus the cumulated sums of gains and losses from trading, i.e.\ the stochastic integral $(H\cdot S)_t:=\int_0^t H_s^\top dS_s$. 
The central objective the decision maker faces is to solve the following  optimization problem
\begin{align}
	\label{eq:intro.non.robust}
	\inf_{H} \mathbb{E}\bigg[\int_0^Tg\Big(t,  x_0+(H\cdot S)_t,H_t\Big) dt+ f\Big( x_0 + (H\cdot S)_T , l(S_T) \Big)  \bigg],
\end{align}
where the infimum is taken over a suitable class of trading strategies $H$, and $g\colon \Omega\times [0,T]\times \mathbb{R}\times \mathbb{R}^d \to \mathbb{R}$ and $f\colon  \Omega\times \mathbb{R}^2\to\mathbb{R}$ are the individual {intermediate} and terminal \emph{cost functions} of the decision~maker. 
Notable examples that fall within this framework are \emph{mean-variance hedging} (e.g., \cite{cadenillas1995stochastic,DR1991,czichowsky2012convex,Schweizer1992,ZhouLi2000}) in which case $g(\omega,t,x,h)=A_t(\omega)x^2+xB _t^\top(\omega) h+h^\top C_t(\omega) h$ and $f(\omega,x,y)=(x-\xi(\omega)-y)^2$ for some predictable processes $(A,B,C)$ having values in $(\mathbb{R},\mathbb{R}^d,\mathbb{R}^{d\times d})$ and a random (${\cal F}_T$-measurable) target level\;$\xi$, and \emph{utility maximization} (e.g., \cite{karatzas1991martingale,cvitanic1992convex,kramkov1999asymptotic,cvitanic2001utility,hugonnier2004optimal}) in which case $g(\omega,t,x,h)=0$ and $f(\omega,x,y)= - U(-x-\xi(\omega)-y) $ for a concave and increasing utility function~$U\colon\mathbb{R}\to\mathbb{R}$ and a random liability~$\xi$.

Arguably, one of the most common models for the underlying $S$ is that it follows the dynamics 
\begin{align}
	\label{eq:intro.def.S} 
	dS_t:= dS^{b,\sigma}_t= b_t dt + \sigma_t dW_t, \qquad S^{b,\sigma}_0=s_0\in\mathbb{R}^d,
\end{align}
where $W$ is a Brownian motion and the (possibly random and time-dependent) drift and volatility are given by the predictable processes $b$ and $\sigma$, respectively.
We refer to \cite{karatzas1991martingale,hu2005utility,horst2014forward,DR1991,ZhouLi2000} for a handful of the many articles analyzing problems similar to \eqref{eq:intro.non.robust} in the setting \eqref{eq:intro.def.S}. 

\vspace{0.5em}
A significant challenge arises when implementing this framework in practice: the true values of the parameters  $b$ and $\sigma$ used to define $S$ in \eqref{eq:intro.def.S} are  not perfectly known. 
Instead, they are typically estimated using e.g.\ historical market data and expert insights.  
Even though these estimation techniques strive to provide parameter values that closely approximate their actual counterparts, a margin for potential inaccuracies inherently exists. 
While this issue has always been present, it gained particularly prominent attention following the financial crisis in 2008.  Since then, a substantial body of research in mathematical finance has been dedicated to developing methods that can accommodate potential model misspecification.

\vspace{0.5em}
The most prominent approach can be traced back to the seminal papers \cite{dow1992uncertainty,gilboa1989maxmin,chen2002ambiguity} and is widely recognized as the `worst-case' approach, or Knightian approach to model uncertainty. 
In our current context, this approach has been pioneered by \cite{PengS_gexp1997,CHMP2002,DenisHuPeng2011,Peng07,Peng08} and it consists in fixing an entire set $\mathcal{U}$ of parameters $(b,\sigma)$ that one would  consider reasonable candidates for representing the actual drift and volatility.
Subsequently, the objective is to address the worst-case optimization problem given by
\begin{align}\label{eq:intro.worst.case.general}  
	\inf_{H} \sup_{ (b,\sigma)\in \mathcal{U}  } \mathbb{E}\bigg[\int_0^Tg\Big(t , x_0+(H\cdot S^{b,\sigma})_t,H_t\Big) dt+  f\Big(x_0 + (H\cdot S^{b,\sigma})_T , l(S_T^{b,\sigma}) \Big)  \bigg].
\end{align}
It is evident that \eqref{eq:intro.worst.case.general} and \eqref{eq:intro.non.robust} coincide when the set $\mathcal{U}$ is a singleton; however, in general, there exists a significant degree of latitude in selecting $\mathcal{U}$. One seemingly intuitive approach consists in starting with parameters $(b^o,\sigma^o)$ that are derived from some estimation procedure which one would typically employ in the non-robust problem \eqref{eq:intro.non.robust}, and then defining $\mathcal{U}$ by adding  all small perturbations of these parameters, effectively creating a small neighborhood around $(b^o,\sigma^o)$.

Before examining that specific choice of $\mathcal{U}$ in more details, we note that problems of the form in \eqref{eq:intro.worst.case.general} have conceived a considerable amount of attention in the mathematical finance community. 
In fact, most of the fundamental and often technically demanding mathematical questions therein are understood fairly well by now and genuinely have affirmative answers.
We~refer e.g.\ to\cite{neufeld2018robust,yang2019constrained,denis2013optimal,biagini2019robust,tevzadze2013robust} for the existence of an optimal strategy of \eqref{eq:intro.worst.case.general}; to  \cite{lin2021optimal,pham2022portfolio,biagini2017robust,Fouque_et_al16} for an analysis of the dynamic programming principle; to \cite{BKN21,schied2007optimal,hernandez2007control,nutz2015robust,dolinsky2014martingale,PW22} for the relation of the present hedging problem to a dual pricing problem; to \cite{PengS_gexp1997,DenisHuPeng2011,Peng07,NeufeldNutz2014,neufeld2017nonlinear,nutz2013random} for modeling uncertainty in semimartingale characteristics; to \cite{bayraktar2013multidimensional,bayraktar2011optimal_a,bayraktar2011optimal_b,PW23,park2023robust} for extensions of \eqref{eq:intro.worst.case.general} with optimal stopping time; and to \cite{cheridito2007second,soner2012wellposedness,soner2013dual,matoussi2015robust} for relations to second-order backward stochastic differential equations.

That being said, it is crucial to highlight that the mere existence of an optimal strategy, while theoretically intriguing, may not be particularly practical. Indeed, the primary interest of a decision maker often lies rather in finding methods to compute it. However, this is precisely where the worst-case approach encounters a notable \emph{limitation}: The computation of both the value and the optimal strategy in \eqref{eq:intro.worst.case.general} is notoriously difficult and except in a few exceptional cases (see \cite{lin2021optimal,PW22,Fouque_et_al16,neufeld2018robust,biagini2019robust}), explicit solutions are unknown.
The goal of this article is to overcome this limitation.

\vspace{0.5em}
Specifically, our main result states that in the setting where $\mathcal{U}$ is a small neighborhood of fixed parameters $(b^o,\sigma^o)$, the following holds:
\begin{enumerate}[leftmargin=2em]
	\item[] Each optimization problem \eqref{eq:intro.non.robust} has an associated number  that quantifies how \emph{sensitive} it is towards model uncertainty, that is, by how much larger \eqref{eq:intro.worst.case.general} is than \eqref{eq:intro.non.robust}. 
	Moreover, that number can be computed explicitly.
\end{enumerate}
In particular, the value of the robust optimization problem \eqref{eq:intro.worst.case.general} can be approximated accurately by \eqref{eq:intro.non.robust}.
More importantly, we rigorously establish what is typically called an `envelope theorem' in the economics literature: that an almost optimal strategy for the robust optimization problem \eqref{eq:intro.worst.case.general} can be derived using an optimal strategy for the classical one \eqref{eq:intro.non.robust}.

\vspace{0.5em}
We proceed to  describe our results more rigorously. For $p\geq 1$, denote by $\mathbb{L}^p$  and $\mathbb{H}^p$ the set of all predictable processes with values in $\mathbb{R}^d$  and $\mathbb{R}^{d\times d}$, respectively. 
We endow $\mathbb{L}^p$ and $\mathbb{H}^p$ with the norms $\|\cdot \|_{\mathbb{L}^p}$ and $\|\cdot \|_{\mathbb{H}^p}$, respectively, defined for every $b\in \mathbb{L}^p$ and $\sigma \in \mathbb{H}^p$ by
\[
	\|b\|_{\mathbb{L}^p}^p:=  \mathbb{E}\Bigg[ \int_0^T |b_t|^p \,dt \Bigg], \qquad \|\sigma\|_{\mathbb{H}^p}^p:=  \mathbb{E}\Bigg[ \bigg(\int_0^T \|\sigma_t\|_{\operatorname{F}}^2 \,dt\bigg)^{\frac{p}{2}} \Bigg],
\]
where $|\cdot|$ is the Euclidean norm and $\| \cdot \|_{\operatorname{F}}$ is the Frobenius / Hilbert-Schmidt norm.

Let $p>3$ and set $\gamma,\eta\geq 0$. Fix  baseline parameters $b^o\in \mathbb{L}^p$ and $\sigma^o\in\mathbb{H}^p$, e.g.\ the `estimators', and for $\varepsilon\geq 0$ denote  by 
\begin{align}\label{eq:amb_set}
	{\cal B}^\varepsilon:=\big\{ (b,\sigma)  \in \mathbb{L}^p \times\mathbb{H}^p : 
	\|b-b^o\|_{\mathbb{L}^p} \leq \gamma\varepsilon\;
	\text{ and }\;
	\|\sigma-\sigma^o\|_{\mathbb{H}^p} \leq \eta \varepsilon \big\}
\end{align}
the set of all parameters that fall in the $\varepsilon$-neighborhood of the  baseline parameters, weighted by the `aversion parameters' $\gamma,\eta$. 
Arguably, the cases of particular interest are when $\gamma, \eta \in \{0,1\}$:
for $\gamma=1$ and $\eta=0$, the set ${\cal B}^\varepsilon$ corresponds to drift uncertainty (sometimes referred to as the $g$-expectation framework introduced by \cite{PengS_gexp1997,CHMP2002}); 
for $\gamma=0$ and $\eta=1$, it corresponds to volatility uncertainty (also called the $G$-expectation framework proposed by \cite{DenisHuPeng2011,Peng07,Peng08,nutz2013random,SonerTouziZhang2011}); and for $\gamma=\eta=1$ it captures uncertainty in both the drift and volatility (e.g., \cite{NeufeldNutz2014,neufeld2017nonlinear,neufeld2018robust}).

With this notation set in place, consider the robust optimization problem \eqref{eq:intro.worst.case.general} with the choice $\mathcal{U}={\cal B}^\varepsilon$, that is,
\begin{align}\label{eq:intro.worst.case.ball}
	V(\varepsilon) := \inf_{H} \sup_{(b,\sigma)\in {\cal B}^\varepsilon } \mathbb{E}\bigg[\int_0^Tg\Big(t , x_0+(H\cdot S^{b,\sigma})_t,H_t\Big) dt+ f\Big(x_0 + (H\cdot S^{b,\sigma})_T , l(S_T^{b,\sigma}) \Big)  \bigg],
\end{align}
where the infimum is taken over all predictable trading strategies $H$ taking values in a predictably measurable subset of $\mathbb{R}^d$ (i.e., a set-valued map which is predictable; see Assumption~\ref{dfn:control}). 
The latter is a technical condition that is not too restrictive; see also Lemma~\ref{lem:set_value_ex}.
In particular, $V(0)$ is the non-robust optimization problem computed using the baseline parameters $b^o$ and $\sigma^o$.

\vspace{0.5em}
In Theorem \ref{thm:main} we show that if $(x,h)\to g(\omega,t,x,h)$ and $(x,y)\to f(\omega,x,y)$ are convex and twice continuously differentiable for every $(\omega,t)\in\Omega\times [0,T]$, $l$ is twice continuously differentiable, $\sigma^o$ is non-degenerate, and $g,f,l,b^o,\sigma^o$ satisfy modest growth assumptions, then the following hold. As $\varepsilon\downarrow 0$, 
\begin{align}
	\label{eq:intro.sensitivity}
	\begin{split}
	V(\varepsilon) 
	= V(0) + \varepsilon\Big(  \gamma \big\lVert Y^*H^*+ \mathcal{Y}^* \big\rVert_{\mathbb{L}^q} 
		+ \eta \big\lVert Z^* (H^*)^\top + \mathcal{Z}^* \big\rVert_{\mathbb{H}^q}\Big) + O(\varepsilon^2),
	\end{split}
\end{align}
where  $O$ denotes the Landau symbol, $H^\ast$ is the unique optimizer for $V(0)$, and $q=\frac{p}{p-1}$ is the conjugate H\"older exponent to $p$.
Moreover, the processes $Y^\ast,\mathcal{Y}^\ast,Z^\ast,\mathcal{Z}^*$ appearing in \eqref{eq:intro.sensitivity} take the following form:
set $S^o=S^{b^o,\sigma^o}$ to be stock following the baseline parameters and for simplicity here in the introduction let $d=1$.
Then 
\begin{align*}
Y_t^*&=\mathbb{E}\bigg[\partial_xf\Big(x_0+ (H^*\cdot S^{o})_T , l(S_T^{o})\Big)+\int_t^T\partial_xg\Big(s,x_0+ (H^*\cdot S^{o})_s,H_s^*\Big) ds\,\bigg|\,{\cal F}_t \bigg], \\
{\cal Y}^\ast_t
&=\mathbb{E}\bigg[\partial_y f\Big(x_0+ (H^*\cdot S^{o})_T, l(S_T^{o})\Big) l'(S_T^{o}) \,\bigg|\, {\cal F}_t\bigg] ,\\
Z_t^*&= \frac{d}{dt}\langle Y^*, W\rangle_t,
 \quad\text{ and } \quad
 \mathcal{Z}_t^\ast=\frac{d}{dt} \langle{\cal Y}^*, W\rangle_t,
\end{align*}
where $\langle M, N\rangle$ denotes their predictable covariation between two martingales $M$ and $N$. We refer to Section \ref{sec:exmp_mv_hedging} for a more explicit expression of $V'(0)$ in the  special case of robust mean-variance hedging.

Next, imposing the same assumption made before on $g,f,l,b^o,\sigma^o$, recall that $H^\ast$ is the unique optimizer for the non-robust problem $V(0)$, and set 
\[ 
V^\ast(\varepsilon) =\sup_{(b,\sigma)\in {\cal B}^\varepsilon } \mathbb{E}\Bigg[\int_0^Tg\Big(t , x_0+(H^*\cdot S^{b,\sigma})_t,H^*_t\Big) dt+ f\Big( x_0 + (H^*\cdot S^{b,\sigma})_T\,,\,l(S_T^{b,\sigma}) \Big)  \Bigg]. 
\]
In other words, $V^\ast(\varepsilon)$ tracks how well the optimal strategy computed for the baseline parameters $(b^o, \sigma^o)$ performs in the worst case when nature is allowed to maliciously select parameter variations $(b, \sigma) \in {\cal B}^\varepsilon$.
Hence one might actually argue that the quantity $V^\ast(\varepsilon)$ is practically more relevant than $V(\varepsilon)$.
As an application of our main result, Theorem \ref{thm:main2} shows that~as~$\varepsilon\downarrow 0$,
\begin{align*}
	V^\ast(\varepsilon) 
	&=  V(\varepsilon)   +  O(\varepsilon^2).
\end{align*}
In particular $(V^\ast)'(0)=V'(0)$, and up to a second order correction term, the unique optimal strategy $H^\ast$ computed for $(b^o,\sigma^o)$ will perform just as good as the strategy computed for the robust optimization problem $V(\varepsilon)$. 
In other words, there is no genuine need for the challenging computation of a strategy that is optimal for the robust strategy -- a  phenomenon that is commonly referred to as the envelope theorem in economic literature.

\vspace{0.5em}
\emph{Related literature.}
The results that are closest to the present ones were obtained in the context of \emph{Wasserstein} distributionally robust optimization, i.e.\ when ${\cal B}^\varepsilon$ consists in all probability laws (for $S^{b,\sigma}$) which are close in Wasserstein distance to the distribution of $S^{o}$. 
Starting with \cite{blanchet2019quantifying,gao2023distributionally,mohajerin2018data,pflug2007ambiguity}, this branch of research has gained a significant amount of attention, { and in that setting the following is known.
In a one-period framework, a sensitivity analysis similar to ours was obtained in  \cite{bartl2021sensitivity,nendel2022parametric,gao2022wasserstein,obloj2021distributionally}. 
On the other hand, in a framework with more than one period the classical Wasserstein distance is not a suitable distance because it neglects the temporal structure of stochastic processes (see e.g.\ \cite{backhoff2020adapted,pflug2010version,pflug2012distance}) and an adapted variant takes its role.
In a discrete-time multi-period framework,  a sensitivity analysis similar to ours (but w.r.t\ the adapted Wasserstein distance) was established in  \cite{bartl2023sensitivity,yifan}.
It is important to highlight that although certain proof techniques in the present article bear similarities to those in the above settings, there are significant distinctions. 
Most notably, our choice of ${\cal B}^\varepsilon$ is far more rigid than its Wasserstein counterpart. 
Consequently, we cannot directly apply the duality between $L^p$ and $L^q$ spaces to deduce the value of $V'(0)$ (e.g.\ as in \cite{bartl2021sensitivity,bartl2023sensitivity}); instead, a substantial portion of our efforts revolves around establishing a suitable representation involving BSDE's which allow to express $V'(0)$ as the supremum over certain linear functions.
In a continuous-time framework, there are multiple natural adapted variants of the Wasserstein distance. 
A continuous dependence of various optimization problems (w.r.t.\ one of these variants) has been established in \cite{backhoff2020adapted}, and more recently (after the completion of this work) \cite{jiang2024sensitivity,sauldubois2024first} analyze certain sensitivities.
While the findings in \cite{bartl2021limits,bartl2023sensitivity,fuhrmann2023wasserstein,jiang2024sensitivity,sauldubois2024first} indicate a potential  connection between the sensitivity computed for uncertainty w.r.t.\ an adapted Wasserstein distance and the present setting with (only) drift uncertainty, exploring the details of these relations first requires a comprehensive understanding of the sensitivity of adapted Wasserstein distance in continuous time. 
This is beyond the scope of the present paper, and we leave it for future work.


Moving away from the above Wasserstein-setting to a parametric setting, \cite{herrmann2017model,herrmann2017hedging} analyze the sensitivity of utility maximization problems to volatility-uncertainty in a continuous-time framework, but in a  somewhat different setting to ours.
}
Roughly put, instead of taking the supremum only over those processes $\sigma$  that satisfy $\|\sigma-\sigma^o\|_{\mathbb{H}^p}\leq\varepsilon$, they allow for all $\sigma$'s and penalize by $\frac{1}{\varepsilon}$ multiplied by `far' $\sigma$ is from $\sigma^o$.
The notion of being far takes a form similar to a KL-divergence but, crucially, with the specific choice of the utility function (i.e.\ $f$ in the present setting) appearing in its definition. In particular, the sensitivity to model uncertainty (corresponding to $V'(0)$ in the present setting) obtained in \cite{herrmann2017model,herrmann2017hedging} \emph{does not} depend on the choice of the utility function nor on the optimal trading strategy $H^\ast$.


\vspace{0.25em}
Finally, for completeness, let us also mention that analyzing the  continuity and sensitivity of \emph{non-robust}  optimization problems w.r.t.\ directional perturbations in the underlying  model is a classical topic in optimization, we refer e.g.\ to \cite{bonnans2013perturbation,calafiore2007ambiguous,lam2016robust,vogel1988stability} and the references therein for some general background.
In the present framework, this would correspond to \emph{fixing} two processes $\tilde{b}$ and $\tilde{\sigma}$ and analyzing the behavior of 
\[ \varepsilon \mapsto \inf_H \mathbb{E}\left[ f\left(x_0 + (H\cdot S^{b^o+\varepsilon \tilde{b}, \sigma^o + \varepsilon \tilde{\sigma}} )_T ,\, l(S_T^{b^o+\varepsilon \tilde{b}, \sigma^o + \varepsilon \tilde{\sigma}}) \right) \right] \]
as $\varepsilon \downarrow 0$. 
Under different sets of assumptions (though always assuming that $l\equiv 0$) the continuity and sensitivity of this problem is studied in  \cite{bayraktar2013stability,larsen2007stability,mostovyi2019sensitivity,mostovyi2024quadratic}.

\section{Main results}\label{sec:model}

\subsection{Notation and preliminaries}\label{sec:preliminary} 

Fix $d\in \mathbb{N}$.
We endow $\mathbb{R}^d$ and $\mathbb{R}^{d\times d}$ with the Euclidean inner product $\langle \cdot,\cdot\rangle$ and the Frobenius inner product $\langle \cdot,\cdot \rangle_{\operatorname{F}}$, respectively. Let $\mathbb{S}^d$ be the set of all symmetric matrices, let $\mathbb{S}^d_+\subset \mathbb{S}^d$ be the subset of all positive semi-definite $d\times d$ matrices and let $\mathbb{S}^d_{++}\subset \mathbb{S}^d_+$ be the subset of all strictly positive definite matrices.
We denote by $I_d$ the identity matrix on $\mathbb{R}^{d\times d}$ and for two $d\times d$ matrices $A$ and $B$, we write $A\leq B$ if $B-A\in \mathbb{S}_+^d$.

Next, let us introduce the following function spaces:
\begin{itemize}[leftmargin=2em]
	\item [$\cdot$] $C([0,T];\mathbb{R}^d)$ (resp.\;$C([0,T];\mathbb{R}^{d\times d})$) is the set of all $\mathbb{R}^d$-valued (resp. $\mathbb{R}^{d\times d}$-valued), continuous functions on $[0,T]$;
	\item [$\cdot$] $C^k(\mathbb{R}^d)$ is the set of all real-valued, $k$-times continuously differentiable functions on $\mathbb{R}^d$. 
	We denote by $\nabla_s g\colon \mathbb{R}^d\ni (s_1,\dots,s_d)^\top=:s\mapsto \nabla_sg(s):=(\partial_{s_1}g(s),\dots,\partial_{s_d}g(s))^\top\in\mathbb{R}^d$ the gradient of $g$ and by $D_s^2 g:\mathbb{R}^d\ni s \mapsto D^2_sg(s):=[\partial_{s_is_j}g(s)]_{1\leq i,j \leq d}\in \mathbb{S}^d$ its Hessian.
\end{itemize}

\vspace{0.5em}

Let $(\Omega,{\cal F}, \mathbb{F}:=({\cal F}_t)_{t\in [0,T]},\mathbb{P})$ be a filtered probability space that satisfies the usual conditions of right-continuity and completeness, where $T$ is a fixed finite time horizon. 
We assume that ${\cal F}_0$ is trivial. 
Fix a $d$-dimensional Brownian motion $W=(W_t)_{t\in[0,T]}$ on that filtered probability space.
For any probability measure $\mathbb{Q}$ on $(\Omega,{\cal F})$, we write  $\mathbb{E}^{\mathbb{Q}}[\cdot]$ for the expectation under $\mathbb{Q}$ and set $\mathbb{E}[\cdot]:=\mathbb{E}^{\mathbb{P}}[\cdot]$.
For sufficiently  integrable $\mathbb{R}^d$-valued processes ${ Y}$ and $Z$, let 
\[\langle { Y},Z \rangle_{\mathbb{Q}\otimes dt} := \mathbb{E}^{\mathbb{Q}}\Bigg[\int_0^T \langle { Y}_t,Z_t \rangle dt\Bigg].\]
In a similar manner, $\langle {Y}, Z \rangle_{\mathbb{Q}\otimes dt,\operatorname{F}} := \mathbb{E}^{\mathbb{Q}}[\int_0^T \langle { Y}_t,Z_t \rangle_{\operatorname{F}} dt]$ for $\mathbb{R}^{d\times d}$-valued processes.
We denote by $\xrightarrow[]{\mathbb{Q}}$ the convergence in $\mathbb{Q}$-measure and adopt a  similar notation  for convergence in $\mathbb{Q}\otimes dt$-measure. Finally, for $p\geq 1$, denote by $C_{\operatorname{BDG},p}\geq 1$ the constant appearing in the (upper) Burkholder-Davis-Gundy (BDG) inequality with exponent $p$ (see \cite[Theorem 92, Chap.\;VII]{dellacherieprobabilities}).

\vspace{0.5em}
For any real-valued semimartingales $M=(M_t)_{t\in[0,T]}$ and $N=(N)_{t\in[0,T]}$ with locally integrable quadratic variation, we denote by $([M,N]_t)_{t\in[0,T]}$ the quadratic co-variation and by $(\langle M,N\rangle_t)_{t\in[0,T]}$ the $\mathbb{F}$-predictable quadratic co-variation (i.e., the compensator of $([M,N]_t)_{t\in[0,T]}$). 
 Note that by \cite[Theorem 4.52, p.\;55]{JacodShirayev2013}, for any  $t\in[0,T]$, we have that $[M,M]_t:= \langle M^c,M^c\rangle_t +\sum_{0\leq s \leq t} \lvert \Delta M_s\rvert^2$ where $M^c$ is the continuous martingale part of $M$ and $\Delta M_t:=M_{t}-M_{t-}$.
 
Finally, for any $p \geq 1$, consider the following spaces.
\begin{itemize}[leftmargin=2em]
	\item [$\cdot$] $L^p({\cal F}_T;\mathbb{R}^d)$ is the set of all $\mathbb{R}^d$-valued, ${\cal F}_T$-measurable random variables $X$ such that $\lVert X \rVert_{L^p}^p:=\mathbb{E}[\lvert  X\rvert^p ]<\infty$;
	\item [$\cdot$] 
	$\mathscr{S}^{p }(\mathbb{R})$ is the set of all real-valued, $\mathbb{F}$-progressively measurable c\`adl\`ag (i.e., right-continuous with left-limits) processes $S=(S_t)_{t\in[0,T]}$ such that $\lVert S \rVert_{\mathscr{S}^{p }}^p   := \mathbb{E}[\sup_{t\in [0,T]}| S_t |^p  ]<\infty$;
	\item [$\cdot$] $\mathscr{H}^{p }(\mathbb{R}^d)$ is the set of all $\mathbb{R}^d$-valued, $\mathbb{F}$-predictable processes $H=(H_t)_{t\in[0,T]}$ such that $\lVert H \rVert_{\mathscr{H}^{p }}^p   := \mathbb{E}[(\int_0^T \lvert H_t \rvert^2 dt)^{p /2}]<\infty$;
		\item [$\cdot$] $\mathscr{M}^{p }(\mathbb{R})$ is the set of all real-valued, c\`adl\`ag $(\mathbb{F},\mathbb{P})$-martingales $M=(M_t)_{t\in [0,T]}$ such that $\lVert M \rVert_{\mathscr{M}^{p }}^p   := \mathbb{E}[[M,M]_T^{p/2}]<\infty$;
	\item [$\cdot$] $\mathbb{L}^p=\mathbb{L}^p(\mathbb{R}^d)$ and $\mathbb{H}^p=\mathbb{H}^p(\mathbb{R}^{d\times d})$ are defined in the introduction.
\end{itemize}

\subsection{The market model(s)}\label{sec:semimtg}
For every (sufficiently integrable) predictable processes $b$ and $\sigma$ taking values in $\mathbb{R}^d$ and $\mathbb{R}^{d\times d}$ respectively, we define the It\^o $(\mathbb{F},\mathbb{P})$-semimartingale $S^{b,\sigma}$ by
\begin{align}\label{eq:posterior_semi_ito}
	\quad S_t^{b,\sigma} := s_0+ \int_0^tb_u du + \int_0^t \sigma_u dW_u,\;\; t\in[0,T],
\end{align}
where $s_0\in\mathbb{R}^d$ is fixed and does not depend on $b$ and $\sigma$, and we recall that $W=(W_t)_{t\in[0,T]}$ is a fixed $d$-dimensional Brownian motion. Moreover, recall that $b^o $ and $\sigma^o$ are two \emph{fixed} baseline processes, that $\gamma,\eta\geq 0$ are fixed constants, and that
\begin{align}
\label{eq:def.ball}
 \mathcal{B}^\varepsilon=\{ (b,\sigma) \in  \mathbb{L}^p(\mathbb{R}^d)\times \mathbb{H}^p(\mathbb{R}^{d\times  d}) : \| b - b^o \|_{\mathbb{L}^p}  \leq\gamma\varepsilon, \,  \| \sigma - \sigma^o \|_{\mathbb{H}^p}  \leq \eta\varepsilon \}.
 \end{align}
The exact value of $p$ is specified in Assumption \ref{as:posterior_refer}.
For shorthand notation, set $S^o:=S^{b^o,\sigma^o}$.
\begin{as}\label{as:posterior_refer} 
The following conditions hold: 
	\begin{itemize}
		\item [(i)] $b^{o} \in \mathbb{L}^p(\mathbb{R}^d)$ and $\sigma^o \in\mathbb{H}^p(\mathbb{R}^{d\times  d})$ for some $p>3$.
		\item [(ii)] $\sigma^o$ is invertible, i.e., there is $(\sigma^o)^{-1}$ such that $\sigma_t^o(\sigma_t^o)^{-1}=I_d$ $\mathbb{P}\otimes dt$-a.e.. 
		Furthermore, the  process ${\cal D}=({\cal D}_t)_{t\in[0,T]}$ defined by
		\[
		\quad\qquad{\cal D}_t:= {\cal E} \left(-\big((\sigma^o)^{-1}{b}^o\big) \cdot {W}\right)_t=  \exp\left(-\frac{1}{2}\int_0^t \lvert (\sigma^o_u)^{-1}{b}_u^o \rvert^2 du -\int_0^t\big((\sigma^o_u)^{-1}{b}_u^o \big)^\top d{W}_u\right)
		\]
		is a $(\mathbb{F},\mathbb{P})$-martingale satisfying ${\cal D}_T\in L^\beta({\cal F}_T;\mathbb{R})$ for every $\beta \geq 1$.
	\end{itemize}
\end{as}

\begin{rem}
\label{rem:p_char}
	Assumption \ref{as:posterior_refer}\;(i) implies that for every  $\varepsilon\geq 0$, 
	$\sup_{(b,\sigma)\in \mathcal{B}^\varepsilon}\|S^{b,\sigma} \|_{\mathscr{S}^p}<\infty$.
	In addition, Assumption \ref{as:posterior_refer}\;(ii) implies that  the measure $\mathbb{Q}$ defined by
\begin{align}
\label{eq:mtg_measure}
	\Big.\frac{d\mathbb{Q}}{d\mathbb{P}}\Big|_{{\cal F}_T}:={\cal D}_T
\end{align}
	is a probability measure equivalent to $\mathbb{P}$, and that ${W}^{\mathbb{Q}} := {W}+\int_0^\cdot (\sigma_u^o)^{-1}{b}_u^o du$ is a $d$-dimensional Brownian motion under $\mathbb{Q}$ (by Girsanov's theorem).
	These observations will be used later.
\end{rem}

Let us provide sufficient conditions for Assumption \ref{as:posterior_refer} to hold true.
The proofs can be found in Section \ref{sec:remaining.proofs}.

\begin{lem}\label{lem:posterior_refer2} Suppose that $\sigma^o$ is invertible. Then each of the following conditions is sufficient for Assumption \ref{as:posterior_refer} to hold: 
	\begin{itemize}
		\item [(i)] $b^o$ and $\sigma^o$ are uniformly bounded and $(\sigma^o)^\top\sigma^o$ satisfies a uniform ellipticity condition, i.e., there are $C_{{b},{\sigma}}>0$ and  $\underline{c}\in \mathbb{S}^d_{++}$ s.t.\  $\lvert b_t^o \rvert +\lVert \sigma_t^o \rVert_{\operatorname{F}} \leq C_{{b},{\sigma}}$ and  $(\sigma_t^o)^\top\sigma_t^o\geq \underline{c}$ $\mathbb{P}\otimes dt$-a.e.
		\item [(ii)] 
		$b^o$ and $\sigma^o$ are of the following form:
		\begin{itemize}
			\item [$\cdot$] $b_t^o= \widetilde{b}^o(t,S_t^o),$ $\sigma_t^o= \widetilde{\sigma}^o(t,S_t^o)$ $\mathbb{P}\otimes dt$-a.e. \textnormal{(SDE)},
			\item [$\cdot$] $(\sigma^o_t)^{-1} {b}_t^o= \theta(t,{W})$ $\mathbb{P}\otimes dt$-a.e. 
			\textnormal{(Bene\v{s} condition)},
		\end{itemize}
		where
		$\widetilde{b}^o:[0,T]\times \mathbb{R}^d\rightarrow \mathbb{R}^d$ and $\widetilde{\sigma}^o:[0,T] \times \mathbb{R}^d\rightarrow \mathbb{R}^{d\times d}$ are Borel  functions, there is $C_{\widetilde{b},\widetilde{\sigma}}>0$ such that for every $t\in[0,T]$ and $x,y\in \mathbb{R}^d$
		\begin{align*}
			\begin{aligned}
				\lvert \widetilde{b}^o(t,x) - \widetilde{b}^o(t,y) \lvert + \lVert \widetilde{\sigma}^o(t,x)- \widetilde{\sigma}^o(t,y) \rVert_{\operatorname{F}} 
				&\leq C_{\widetilde{b},\widetilde{\sigma}} \lvert x-y \rvert,\\
				\lvert \widetilde{b}^o(t,x) \lvert + \lVert \widetilde{\sigma}^o(t,x) \rVert_{\operatorname{F}} 
				&\leq C_{\widetilde{b},\widetilde{\sigma}} (1+\lvert x\rvert),
			\end{aligned}
		\end{align*}
		$\theta\colon [0,T]\times C([0,T];\mathbb{R}^d) \rightarrow  \mathbb{R}^d$ is progressively measurable\footnote{
		That is, $\theta$ is jointly Borel-measurable and for every $t\in[0,T]$ and $x,y\in C([0,T];\mathbb{R}^d)$ that satisfy $x(s)=y(s)$ for $s\in[0,t]$, it holds that $\theta(t,x)=\theta(t,y)$, see, e.g.,  \cite[5, IV.94-103, pp.~145-152]{dellacherieprobabilitiesA}.
		}, and there is $C_\theta>0$ s.t.\ for all $t\in[0,T]$ and $x=(x_t)_{t\in[0,T]}\in C([0,T];\mathbb{R}^d)$, $\lvert \theta(t,x)\rvert \leq C_{\theta} (1 + \sup_{s\in [0,t]} \lvert x_s \rvert)$.
	\end{itemize}
\end{lem}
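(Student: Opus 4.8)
Throughout, abbreviate $\theta^o:=(\sigma^o)^{-1}b^o$, so that $\mathcal D=\mathcal E(-\theta^o\cdot W)$. Both sets of conditions already include invertibility of $\sigma^o$, so what is left to check is part~(i) of Assumption~\ref{as:posterior_refer} (i.e.\ $b^o\in\mathbb L^p$, $\sigma^o\in\mathbb H^p$ for some $p>3$) and the two statements in part~(ii), namely that $\mathcal D$ is a $(\mathbb F,\mathbb P)$-martingale and that $\mathcal D_T\in L^\beta$ for every $\beta\ge1$. I would treat the two sufficient conditions in turn, and the only place where I expect to have to do real work is the $L^\beta$-bound for $\mathcal D_T$ under the Beneš condition.

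\emph{The bounded, uniformly elliptic case.} If $b^o,\sigma^o$ are uniformly bounded, then $\|b^o\|_{\mathbb L^p}$ and $\|\sigma^o\|_{\mathbb H^p}$ are finite for every $p\ge1$, so Assumption~\ref{as:posterior_refer}(i) is immediate. Uniform ellipticity $(\sigma^o_t)^\top\sigma^o_t\ge\underline c\in\mathbb S^d_{++}$ forces the smallest singular value of $\sigma^o_t$ to be at least $\lambda_{\min}(\underline c)^{1/2}>0$, so $(\sigma^o_t)^{-1}$ is bounded uniformly in $(\omega,t)$; together with $|b^o_t|\le C_{b,\sigma}$ this gives a deterministic $K$ with $|\theta^o_t|\le K$ $\mathbb P\otimes dt$-a.e. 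Then $\mathbb E[\exp(\tfrac12\int_0^T|\theta^o_t|^2\,dt)]\le e^{K^2T/2}<\infty$, so Novikov's criterion shows $\mathcal D$ is a true martingale, and for the moments I would use
\[
\mathcal D_T^\beta=\mathcal E(-\beta\theta^o\cdot W)_T\,\exp\!\Big(\tfrac{\beta^2-\beta}{2}\int_0^T|\theta^o_t|^2\,dt\Big);
\]
the second factor is bounded by $\exp(\tfrac12(\beta^2-\beta)K^2T)$, while $\beta\theta^o$ is again bounded, so $\mathcal E(-\beta\theta^o\cdot W)$ is a martingale of terminal expectation $1$, whence $\mathbb E[\mathcal D_T^\beta]<\infty$.

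\emph{The SDE/Beneš case.} Since $\widetilde b^o,\widetilde\sigma^o$ are Borel in time, Lipschitz in space and of linear growth, the SDE $dX_t=\widetilde b^o(t,X_t)\,dt+\widetilde\sigma^o(t,X_t)\,dW_t$ with $X_0=s_0$ has a unique strong solution, which is exactly $S^o=S^{b^o,\sigma^o}$; the standard a priori estimate (Burkholder--Davis--Gundy followed by Gronwall) then gives $\mathbb E[\sup_{t\le T}|S^o_t|^m]<\infty$ for every $m\ge1$. Substituting the linear-growth bounds yields $\|b^o\|^p_{\mathbb L^p}\le C\int_0^T\mathbb E[(1+|S^o_t|)^p]\,dt<\infty$ and $\|\sigma^o\|^p_{\mathbb H^p}\le C\,T^{p/2}\,\mathbb E[(1+\sup_{t\le T}|S^o_t|)^p]<\infty$ for every $p$, which settles Assumption~\ref{as:posterior_refer}(i). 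The identity $\theta^o_t=\theta(t,W)$ with $|\theta(t,x)|\le C_\theta(1+\sup_{s\le t}|x_s|)$ is precisely the classical Beneš condition, under which $\mathcal D=\mathcal E(-\theta^o\cdot W)$ is a true $(\mathbb F,\mathbb P)$-martingale (Beneš's theorem, cf.\ Karatzas--Shreve \S3.5), so only the $L^\beta$-bound on $\mathcal D_T$ remains.

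\emph{The expected obstacle.} Establishing $\mathcal D_T\in L^\beta$ for every $\beta\ge1$ in the Beneš case is the delicate step, since Novikov's condition is no longer available. My plan would be to push the usual Beneš time-splitting argument up to the level of $\mathcal D^\beta$: fix a deterministic partition $0=t_0<\dots<t_N=T$ whose mesh is chosen small (depending on $\beta$ and $C_\theta$), write $\mathcal D_T^\beta=\prod_{i=1}^N(\mathcal D_{t_i}/\mathcal D_{t_{i-1}})^\beta$, and estimate by conditioning successively on $\mathcal F_{t_{N-1}},\mathcal F_{t_{N-2}},\dots$. On each block one controls $\int_{t_{i-1}}^{t_i}|\theta^o_s|^2\,ds$ by $C_\theta^2(t_i-t_{i-1})\big(1+\sup_{u\le t_{i-1}}|W_u|+\sup_{t_{i-1}\le u\le t_i}|W_u-W_{t_{i-1}}|\big)^2$, exploits that the running-supremum increment over the block is independent of $\mathcal F_{t_{i-1}}$ with Gaussian tails of scale $\sqrt{t_i-t_{i-1}}$, and uses that $\beta\theta^o$ again satisfies the Beneš bound so the stochastic-exponential pieces have conditional expectation one. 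The real difficulty, and the part where the structure of the hypotheses must be used carefully, is controlling how the $\mathcal F_{t_{i-1}}$-measurable correction factors (powers of $\exp(\kappa_i\sup_{u\le t_{i-1}}|W_u|^2)$) accumulate over the $N$ blocks, i.e.\ keeping the coefficients $\kappa_i$ bounded by refining the partition; this is the step that forces the mesh to depend on $\beta$.
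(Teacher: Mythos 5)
Your treatment of case (i) and of the SDE/martingale parts of case (ii) matches the paper's proof almost verbatim: bounded $\theta^o:=(\sigma^o)^{-1}b^o$ plus Novikov for the martingale property, and the identity $\mathcal{D}_T^\beta=\mathcal{E}(-\beta\theta^o\cdot W)_T\exp\bigl(\tfrac{\beta^2-\beta}{2}\int_0^T|\theta^o_t|^2\,dt\bigr)$ for the moments in (i); Lipschitz/linear-growth well-posedness and polynomial moments of the SDE, then Bene\v{s}'s theorem \cite[Corollary 3.5.16]{KS1991} for the martingale property in (ii). (The paper bounds $\mathbb{E}[\mathcal{E}(-\beta\theta^o\cdot W)_T]\le 1$ via the supermartingale property of a nonnegative local martingale rather than invoking Novikov a second time, but that is a cosmetic difference.)

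The genuine gap is exactly where you flagged it yourself: the $L^\beta$-bound on $\mathcal{D}_T$ under the Bene\v{s} condition. The paper makes no attempt at a self-contained argument here --- it simply cites \cite[Corollary 2]{GM2003}, which gives all moments of a stochastic exponential $\mathcal{E}(-\theta\cdot W)_T$ when $\theta$ is progressively measurable and of linear growth in $\sup_{s\le t}|W_s|$. Your proposed route (time-splitting with a $\beta$-dependent mesh, block-by-block conditioning, Gaussian tails for the increment suprema) is indeed the standard strategy behind such results, but as written it is a plan, not a proof. The accumulation of the $\mathcal{F}_{t_{i-1}}$-measurable factors $\exp\bigl(\kappa_i\sup_{u\le t_{i-1}}|W_u|^2\bigr)$ that you identify is precisely the crux: one must ensure that, after compounding backwards through the iterated conditional expectations, the effective exponent stays strictly below $1/(2t_{i-1})$ at every stage or the Gaussian integrals diverge, and the block estimate also needs a H\"older/Cauchy--Schwarz split so that the stochastic-exponential piece reduces to a supermartingale bound while the Bene\v{s} correction is absorbed by the Gaussian tail. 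None of this is carried out. Either complete that calculation or, as the paper does, defer to an existing reference such as \cite[Corollary 2]{GM2003}.
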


\begin{rem}
	In Lemma \ref{lem:posterior_refer2}, the uniform boundedness condition of $(b^o,\sigma^o)$ and the SDEs with Lipschitz and linear growth conditions are commonly adopted in continuous-time robust optimization problems with model uncertainty (see, e.g., \cite{BKN21,PW22,PW23,Fouque_et_al16,biagini2017robust}). Moreover, the uniform ellipticity condition of $\sigma^o$ and the Bene\v{s} condition fit into classical utility maximization problems with a dual/martingale approach (see, e.g., \cite{KLS,LabHeu2007,KW2000}).
\end{rem}

Having completed the description of the underlying processes, we can proceed to describe the decision maker's model.

\begin{as}\label{dfn:control}
	Let ${\cal K}:\Omega \times [0,T]\ni (\omega,t)\twoheadrightarrow {\cal K}(\omega,t)\subseteq \mathbb{R}^d$ be a correspondence (i.e., a set-valued map) satisfying that
		\begin{itemize}
			\item [(i)] ${\cal K}$ is weakly $\mathbb{F}$-{predictably} measurable, i.e., 
			\[
				{\cal K}^{-1}(F):=\{(\omega,t)\in \Omega \times [0,T]\,|\, {\cal K}(\omega,t)\cap F \neq \emptyset\}\in {\cal P}
			\]
			for every open set $F\subseteq \mathbb{R}^d$, where ${\cal P}$ denotes the $\mathbb{F}$-predictable $\sigma$-algebra;
			\item [(ii)] ${\cal K}(\omega,t)$ is a non-empty, closed and convex subset of $\mathbb{R}^d$ for every $(\omega,t)\in \Omega \times [0,T]$;
			\item [(iii)] It holds that $K:=\sup_{(\omega,t)\in\Omega \times [0,T]}\sup_{v\in {\cal K}(\omega,t)}|v|<\infty$. 
		\end{itemize}
\end{as}

Then the set of \emph{admissible controls/strategies} is given by 
	\[
	{\cal A}:=\left\{H\;|\;\mbox{$\mathbb{R}^d$-valued, $\mathbb{F}$-predictable processes $H=(H_t)_{t\in[0,T]}$ s.t.\ $H\in {\cal K}$ $\mathbb{P}\otimes dt$-a.e.}\right\}. 
	\]
Note that ${\cal A}$ is non-empty.
This follows from the Kuratowski-Ryll-Nardzewski selection theorem (see, e.g., \cite[Theorem 18.13]{CharalambosKim2006infinite}) which guarantees the existence of predictable selectors $(\omega,t)\to H_t(\omega)\in {\cal K}(\omega,t)$. 

	Let us provide some examples for correspondences ${\cal K}$ satisfying Assumption \ref{dfn:control}. The proofs can be found in Section~\ref{sec:remaining.proofs}.

\begin{lem}\label{lem:set_value_ex}
	Define the following two correspondences.
	\begin{enumerate}
		\item [(i)] ${\cal K}^{\operatorname{Ball}}: \Omega \times [0,T]\ni (\omega,t)\twoheadrightarrow {\cal K}^{\operatorname{Ball}}(\omega,t)\subseteq \mathbb{R}^d$ is given by
		\[
			\quad\qquad {\cal K}^{\operatorname{Ball}}(\omega,t):=\Big\{v\in\mathbb{R}^d\,\Big|\, \big |c_t^{\operatorname{center}}(\omega)-v \big|\leq r^{\operatorname{radius}}_t(\omega)\Big\},
		\]
		where $(c^{\operatorname{center}}_t)_{t\in[0,T]}$ and $(r^{\operatorname{radius}}_t)_{t\in[0,T]}$ are $\mathbb{F}$-predictable processes having values in a bounded subset of $\mathbb{R}^d$ and $[0,+\infty)$, respectively.
		\item [(ii)] ${\cal K}^{\operatorname{Box}}: \Omega \times [0,T]\ni (\omega,t)\twoheadrightarrow {\cal K}^{\operatorname{Box}}(\omega,t)\subseteq\mathbb{R}^d$ is given by
		\begin{align*}
			\quad\quad\qquad {\cal K}^{\operatorname{Box}}(\omega,t):= \prod_{i=1}^d \,[\underline a^i_t(\omega),\overline a^i_t(\omega)]=\Bigg\{v=(v^1,\dots,v^d)^\top\in\mathbb{R}^d\,\bigg|\, \begin{aligned}
				&\,v^i\in[\underline a^i_t(\omega),\overline a^i_t(\omega)] \subset \mathbb{R}\;\; \\
				&\,\,\forall \,i=1,\dots,d.
			\end{aligned}
			\Bigg\},
		\end{align*}
		where for every $i=1\dots,d$, $(\underline a_t^i)_{t\in[0,T]}$ and $(\overline a_t^i)_{t\in[0,T]}$ are $\mathbb{F}$-predictable processes having values in a bounded subset of $\mathbb{R}$ such that $\underline a^i_t(\omega)\leq \overline a^i_t(\omega)$ for every~$(\omega,t)\in\Omega\times[0,T]$.
	\end{enumerate}
	Then, both correspondences ${\cal K}^{\operatorname{Ball}}$ and ${\cal K}^{\operatorname{Box}}$ satisfy Assumption \ref{dfn:control}.
\end{lem}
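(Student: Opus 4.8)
The plan is to verify conditions (i)--(iii) of Assumption~\ref{dfn:control} separately, noting that for both correspondences the geometric conditions (ii) and (iii) are elementary and only condition (i) requires a standard tool. For (i), I would invoke the classical characterization of weak measurability of closed-valued correspondences: a correspondence $\Psi$ from a measurable space $(E,\Sigma)$ into $\mathbb{R}^d$ with nonempty closed values is weakly $\Sigma$-measurable if and only if for every $x\in\mathbb{R}^d$ the distance function $e\mapsto \operatorname{dist}(x,\Psi(e))$ is $\Sigma$-measurable (see, e.g., \cite[Theorem 18.5]{CharalambosKim2006infinite}). I apply this with $(E,\Sigma)=(\Omega\times[0,T],{\cal P})$. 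Since a finite tuple of $\mathbb{F}$-predictable processes is again ${\cal P}$-measurable as an $\mathbb{R}^m$-valued map, and the composition of a ${\cal P}$-measurable map with a continuous (hence Borel) function is ${\cal P}$-measurable, it then suffices to exhibit the relevant distance function as a jointly continuous function of the predictable processes defining ${\cal K}$.

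\emph{The ball.} For (ii): ${\cal K}^{\operatorname{Ball}}(\omega,t)$ contains its center $c^{\operatorname{center}}_t(\omega)$ (because $r^{\operatorname{radius}}_t(\omega)\geq 0$), hence is nonempty, and it is a closed Euclidean ball, hence closed and convex. For (iii): by the triangle inequality $|v|\leq |c^{\operatorname{center}}_t(\omega)|+r^{\operatorname{radius}}_t(\omega)$ for every $v\in{\cal K}^{\operatorname{Ball}}(\omega,t)$, and the right-hand side is uniformly bounded in $(\omega,t)$ since $c^{\operatorname{center}}$ and $r^{\operatorname{radius}}$ take values in bounded sets. For (i): for a closed ball one has $\operatorname{dist}\big(x,{\cal K}^{\operatorname{Ball}}(\omega,t)\big)=\big(|x-c^{\operatorname{center}}_t(\omega)|-r^{\operatorname{radius}}_t(\omega)\big)^+=\Phi_x\big(c^{\operatorname{center}}_t(\omega),r^{\operatorname{radius}}_t(\omega)\big)$, where $\Phi_x(c,r):=(|x-c|-r)^+$ is continuous; since $(c^{\operatorname{center}},r^{\operatorname{radius}})$ is ${\cal P}$-measurable, this is a predictable process for each $x$, and the characterization above gives weak predictable measurability.

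\emph{The box.} For (ii): ${\cal K}^{\operatorname{Box}}(\omega,t)$ is a product of the nonempty (as $\underline a^i_t(\omega)\leq\overline a^i_t(\omega)$) closed convex intervals $[\underline a^i_t(\omega),\overline a^i_t(\omega)]$, hence nonempty, closed and convex. For (iii): $|v|^2=\sum_{i=1}^d |v^i|^2\leq \sum_{i=1}^d\big(|\underline a^i_t(\omega)|\vee|\overline a^i_t(\omega)|\big)^2$ for $v\in{\cal K}^{\operatorname{Box}}(\omega,t)$, which is uniformly bounded in $(\omega,t)$ by boundedness of the $\underline a^i,\overline a^i$. For (i): using that the Euclidean distance to a product set is the $\ell^2$-aggregate of the coordinate distances, together with $\operatorname{dist}(y,[\alpha,\beta])=(\alpha-y)^++(y-\beta)^+$ for $\alpha\leq\beta$, one gets
\[
\operatorname{dist}\big(x,{\cal K}^{\operatorname{Box}}(\omega,t)\big)=\Big(\textstyle\sum_{i=1}^d\big[(\underline a^i_t(\omega)-x^i)^++(x^i-\overline a^i_t(\omega))^+\big]^2\Big)^{1/2},
\]
which is again a jointly continuous function of the predictable tuple $(\underline a^1,\dots,\underline a^d,\overline a^1,\dots,\overline a^d)$, hence predictable for each $x$, so weak predictable measurability follows as before.

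\emph{On the difficulty.} There is essentially no substantial obstacle: the only ingredient beyond elementary convex geometry is the cited equivalence between weak measurability of a closed-valued correspondence and measurability of its distance functions. Should one prefer a self-contained argument, an equally short alternative is to cover a given open $F$ by countably many open balls (resp.\ open boxes) with rational data and use the elementary facts that a closed ball $\overline B(c,r)$ meets an open ball $B(q,\rho)$ iff $|c-q|<r+\rho$, and that a closed box meets an open box iff it does so coordinatewise; this exhibits $({\cal K}^{\operatorname{Ball}})^{-1}(F)$ (resp.\ $({\cal K}^{\operatorname{Box}})^{-1}(F)$) as a countable union of finite intersections of sets of the form $\{(\omega,t):X_t(\omega)<0\}\in{\cal P}$ for predictable processes $X$. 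The distance-function route is simply the more compact of the two.
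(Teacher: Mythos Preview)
Your argument is correct. The route differs from the paper's in the tool invoked for weak predictable measurability. The paper writes ${\cal K}^{\operatorname{Ball}}$ as the sublevel set of the Carath\'eodory function $\mathfrak{F}(\omega,t,v)=|c^{\operatorname{center}}_t(\omega)-v|-r^{\operatorname{radius}}_t(\omega)$ and appeals to \cite[Proposition~14.33]{rockafellar2009variational}; for ${\cal K}^{\operatorname{Box}}$ it represents the box as a finite intersection of half-spaces ${\cal LB}^i,{\cal UB}^i$ (each itself a Carath\'eodory sublevel set) and then uses the stability of measurability under intersections \cite[Proposition~14.11(i)]{rockafellar2009variational}. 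You instead use the distance-function characterization \cite[Theorem~18.5]{CharalambosKim2006infinite} and compute $\operatorname{dist}(x,\cdot)$ explicitly in both cases. Your approach is more uniform (the same device handles both correspondences without a separate intersection step), and it has the side benefit of making the geometric content explicit; the paper's approach is slightly more modular in that it avoids computing distances but requires two distinct results from Rockafellar--Wets. Both are standard and equally short.
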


Fix $x_0\in \mathbb{R}$, the initial  capital of the market participant.
For $(b,\sigma) \in \mathcal{B}^\varepsilon$ and $H\in {\cal A}$,\;the market participant's controlled process $X^{H;b,\sigma}=(X_t^{H;b,\sigma})_{t\in [0,T]}$ is given by the stochastic\;integral
\begin{align}\label{eq:control_process}
	X^{H;b,\sigma}_t := x_0 + (H\cdot S^{b,\sigma})_t  = x_0 + \int_0^t H_u^\top b_u du + \int_0^t H_u^\top \sigma_u dW_u,\quad t\in[0,T].
\end{align}
For shorthand notation, given $H\in{\cal A}$, denote by $X^{H;o}$ the controlled process under the postulated It\^o semimartingale $S^o$, i.e., $X^{H;o}_t := x_0+(H\cdot S^o)_t$.

\vspace{0.5em}
Assume that the decision maker holds a financial position $l(S^{b,\sigma}_T)$ where $l\colon\mathbb{R}^d\to\mathbb{R}$ is a given function.
Moreover, her intermediate and terimal cost functions (representing her individual preferences) are given by $g:\Omega\times[0,T]\times \mathbb{R}\times\mathbb{R}^d\to \mathbb{R}$ and $f:\Omega\times\mathbb{R}^2\rightarrow \mathbb{R}$, and she aims to solve the robust optimization problem
	\begin{align*}
		V(\varepsilon):= \inf_{H\in {\cal A}} {\cal V}(H,\varepsilon):= \inf_{H\in {\cal A}}\sup_{(b,\sigma)\in \mathcal{B}^\varepsilon}\mathbb{E} \bigg[\int_0^Tg\big(t,X_t^{H;b,\sigma},H_t\big)dt+ f \big(X_T^{H;b,\sigma},l(S_T^{b,\sigma})\big)\bigg].
	\end{align*}

We impose the following conditions on the functions $g,f$ and $l$.
\begin{as}\label{as:objective} The cost functions $g:\Omega\times[0,T]\times \mathbb{R}\times \mathbb{R}^d \ni (\omega,t,x,h) \to g(\omega,t,x,h)\in \mathbb{R}$ and $f:\Omega\times\mathbb{R}\times \mathbb{R}\ni (\omega,x,y)\rightarrow f(\omega,x,y)\in \mathbb{R}$ satisfy the following conditions:
	\begin{itemize}
		\item [(i)] $g(\cdot,\cdot,x,h)$ is $\mathbb{F}$-predictable\,and\,$f(\cdot,x,y)$ is ${\cal F}_T$-measurable for every $(x,y,h)\in \mathbb{R}\times\mathbb{R}\times\mathbb{R}^d$. Furthermore, $g(\omega,t,\cdot,\cdot )\in C^{2}(\mathbb{R}\times \mathbb{R}^d)$, $f(\omega,\cdot,\cdot)\in C^2(\mathbb{R}^2)$ for every $(\omega,t)\in \Omega \times [0,T]$.
		\item [(ii)] there exist $0<r<\min\{\frac{p-2}{2}, p-3\}$ and $\overline C_{2}>0$ such that 
		\begin{align*}
			\hspace{3.em}
			\begin{aligned}
			\|D^2_{x,h}g(\omega,t,x,h) \|_{\operatorname{F}}+\lVert D_{x,y}^2f(\omega,x,y) \rVert_{\operatorname{F}} \leq \overline C_{2}(1+\lvert (x,h)^\top|^{r}+|y|^r)
			\end{aligned}
		\end{align*}
		for every  $(\omega,t,x,y,h) \in \Omega\times[0,T]\times \mathbb{R}\times \mathbb{R}\times\mathbb{R}^d $, with $p> 3$ defined in Assumption~\ref{as:posterior_refer}\;(i);
		\item [(iii)] there exists $\underline C_0\in \mathbb{R}$ such that $g\geq \underline C_0$ and $f\geq \underline C_0$; 
		\item [(iv)] $D_{x,h}^2g\in \mathbb{S}_+^{d+1}$ (i.e., $g$ is convex in $(x,h)$) and there is $\underline C_{2}>0$ such that~$\partial_{xx}f\geq \underline C_{2}$ (i.e., $f$ is strongly convex in~$x$).
	\end{itemize}
	The function $l:\mathbb{R}^d\ni s\to l(s)\in \mathbb{R}$ satisfies the following:
	\begin{itemize}
		\item [(v)] $l\in C^2(\mathbb{R}^d)$;
		\item [(vi)] all first order and second order derivatives of $l$ are Lipschitz continuous and bounded.
	\end{itemize}
\end{as}

\begin{rem}\label{rem:objective} 
	If Assumption \ref{as:objective} is satisfied, a straightforward application of the fundamental theorem of calculus implies that the following growth conditions hold (which we shall use often in what follows).
	\begin{itemize}
		\item [(i)] There is $\widetilde{C}>0$ such that for every $(\omega,t,x,y,h) \in \Omega\times [0,T]\times \mathbb{R}\times \mathbb{R}\times \mathbb{R}^d$,
		\begin{align*}
			\begin{aligned}
			|g(\omega,t,x,h) |+\lvert  f(\omega,x,y)  \rvert&\leq \widetilde{C}(1 +\lvert x \rvert^{r+2}+\lvert h \rvert^{r+2} +\lvert y \rvert^{r+2} ),\\
			|\nabla_{x,h} g(\omega,t,x,h) |+\lvert \nabla_{x,y} f(\omega,x,y) \rvert&\leq \widetilde{C}(1+  \lvert x \rvert^{r+1} + \lvert h \rvert^{r+1}+ \lvert y \rvert^{r+1}).
			\end{aligned}
		\end{align*}
		\item [(ii)] There is $C_l>0$ such that for every $s,\hat{s}\in \mathbb{R}^d$,
		\begin{align*}
		\lvert l (s)\rvert \leq C_l (1+ \lvert s \rvert)
		\quad\text{and}\quad 
		\lvert \nabla_s l (s) \rvert +\lVert {D}^2_s l (s) \rVert_{\operatorname{F}}  &\leq C_{l}, \\
		\lvert l({s}) -  l(\hat{s}) \rvert + \lvert \nabla_s l({s}) -  \nabla_s l(\hat{s}) \rvert +\lVert D_s^2 l(s)- D_s^2 l(\hat{s}) \rVert_{\operatorname{F}}  
		&\leq C_{l}\lvert s-\hat{s} \rvert.
		\end{align*}
	\end{itemize}
\end{rem}

\subsection{Optimization for the baseline parameters and BSDEs} 

In this section we collect some preliminary results, including the existence of an optimal strategy $H^\ast\in\mathcal{A}$.
In particular, it turns out to be convenient to characterize the processes $Y, \mathcal{Y}, Z, \mathcal{Z}$ that appear in the formula for $V'(0)$ (see \eqref{eq:intro.sensitivity}) using an auxiliary BSDE formulation. 

\begin{pro}\label{pro:main0}
	Suppose that Assumption \ref{as:posterior_refer}, \ref{dfn:control}, and \ref{as:objective} are satisfied. 
	Then the following hold.
	\begin{itemize} 
		\item [(i)] There exists a unique optimizer $H^*\in {\cal A}$ satisfying
		\[
		V(0) = {\cal V}(H^*,0)=\mathbb{E}\bigg[\int_0^Tg\big(t,X_t^{H^*;o},H_t^*\big)dt+f \big(X_T^{H^*;o},l(S_T^{o})\big)\bigg].
		\]
		\item [(ii)] There exists a unique solution $(Y^*,Z^*,L^*)\in \mathscr{S}^{2}(\mathbb{R}) \times \mathscr{H}^{2}(\mathbb{R}^d) \times \mathscr{M}^2(\mathbb{R})$ of for $t\in [0,T]$
		\begin{align}\label{eq:BSDE_re_x}
			\quad Y_t^* = \partial_{x}f\big(X_T^{H^*;o},l(S_T^o)\big)+\int_t^T\partial_x g\big(s,X_s^{H^*;o},H_s^*\big)ds  - \int_t^T (Z_u^*)^\top dW_u-(L_T^*-L_t^*), 
		\end{align}
		with $L^*_0=0$, where $L^*$ and $(\int_0^t (Z_s^*)^\top dW_s)_{t\in[0,T]}\in\mathscr{M}^2(\mathbb{R})$ are strongly orthogonal.\footnote{We refer to \cite[Chapter IV.3, p.148]{Protter2005} for the definition of the strong orthogonality and the following equivalences: two martingales $M,N\in \mathscr{M}^2(\mathbb{R})$ are strongly orthogonal if and only if $(M_tN_t)_{t\in[0,T]}$ is a uniformly integrable $(\mathbb{F},\mathbb{P})$-martingale if and only if $([M,N]_t)_{t\in[0,T]}$ is a uniformly integrable $(\mathbb{F},\mathbb{P})$-martingale.}
		\item[(iii)]
		There exists a unique solution
		 $({{\cal Y}}^*,{{\cal Z}}^*,{{\cal L}}^*)\in(\mathscr{S}^{2}(\mathbb{R}))^d \times (\mathscr{H}^{2}(\mathbb{R}^d))^d \times (\mathscr{M}^2(\mathbb{R}))^d$ 
		of
		\begin{align}\label{eq:BSDE_re_y}
			\quad {\mathcal{Y}}_t^* = \partial_{y} f\big(X_T^{H^*;o},l(S_T^o)\big)\nabla l(S_T^o)   - \int_t^T {{\cal Z}}^*_u dW_u-({{\cal L}}_T^* -{{\cal L}}_t^* ),\quad t\in [0,T],
		\end{align}
		with ${{\cal L}}^*_0=0$,  where ${{\cal L}}^{\ast}$ and $(\int_0^t \mathcal{Z}_s^{*} dW_s)_{t\in[0,T]}$ are strongly orthogonal.\footnote{Two $d$-dimensional martingales $M=(M^1,\dots,M^d)^\top$ and $N=(N^1,\dots,N^d)^\top$ are strongly orthogonal if $M^i, N^i\in \mathscr{M}^2(\mathbb{R})$ are strongly orthogonal for all $i=1,\dots,d$.}
	\end{itemize}
\end{pro}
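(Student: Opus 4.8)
The plan is to handle the three parts separately, with part~(i) carrying essentially all of the work; parts~(ii)--(iii) will be routine consequences of martingale representation theory once the relevant $L^2$-moment bounds are in place. Throughout, recall that since $\mathcal B^0=\{(b^o,\sigma^o)\}$ is a singleton, $\mathcal V(H,0)=J(H):=\mathbb E[\int_0^T g(t,X^{H;o}_t,H_t)\,dt+f(X^{H;o}_T,l(S^o_T))]$. I would first record that $J$ is real-valued on $\mathcal A$: combining the Burkholder--Davis--Gundy and H\"older inequalities with $|H|\le K$, $b^o\in\mathbb L^p$, and $\sigma^o\in\mathbb H^p$ gives $\sup_{H\in\mathcal A}\|X^{H;o}\|_{\mathscr S^p}<\infty$, and since $r+2<p$ the growth bounds of Remark~\ref{rem:objective}, together with the linear growth of $l$ and $g,f\ge\underline C_0$, bound $|J(H)|$. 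Because $H\mapsto(X^{H;o},H)$ is affine and $g,f$ are convex in their state variables, $J$ is convex on the convex set $\mathcal A$. For uniqueness, if $H^1,H^2\in\mathcal A$ both attain $V(0)$, evaluating the strong convexity of $f(\omega,\cdot,y)$ (Assumption~\ref{as:objective}\,(iv)) and the convexity of $g$ at the midpoint $\bar H=\tfrac12(H^1+H^2)\in\mathcal A$ yields $J(\bar H)\le V(0)-\tfrac{\underline C_2}{8}\mathbb E[|X^{H^1;o}_T-X^{H^2;o}_T|^2]$, whence $X^{H^1;o}_T=X^{H^2;o}_T$ a.s.; under the equivalent measure $\mathbb Q$ of Remark~\ref{rem:p_char} the process $X^{H^1;o}-X^{H^2;o}=\int_0^\cdot(H^1-H^2)^\top\sigma^o\,dW^{\mathbb Q}$ is a martingale with vanishing terminal value, hence indistinguishable from $0$, and invertibility of $\sigma^o$ forces $H^1=H^2$ $\mathbb P\otimes dt$-a.e.

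For existence, take a minimizing sequence $(H^n)\subset\mathcal A$. As $|H^n|\le K$, it is bounded in the Hilbert space $L^2(\mathbb P\otimes dt;\mathbb R^d)$, so a subsequence converges weakly; by Mazur's lemma there are convex combinations $\widehat H^j\in\operatorname{conv}\{H^{n_k}:k\ge j\}$ with $\widehat H^j\to H^*$ strongly in $L^2(\mathbb P\otimes dt)$ and, along a further subsequence, $\mathbb P\otimes dt$-a.e.; closedness and convexity of $\mathcal K(\omega,t)$ give $H^*\in\mathcal A$. Since $|\widehat H^j|\le K$, dominated convergence upgrades this to convergence in $\mathscr H^p$, and the BDG/H\"older estimates above yield $X^{\widehat H^j;o}\to X^{H^*;o}$ in $\mathscr S^2$, so (passing to a subsequence) $X^{\widehat H^j;o}_t(\omega)\to X^{H^*;o}_t(\omega)$ for every $t$, a.s. Continuity of $g$ and $f$ gives convergence of the integrands, convexity of $J$ gives $J(\widehat H^j)\le\sup_{k\ge j}J(H^{n_k})\to V(0)$, and two applications of Fatou's lemma (legitimate since $g,f\ge\underline C_0$), on $(\Omega\times[0,T],\mathbb P\otimes dt)$ and on $(\Omega,\mathbb P)$ respectively, give $J(H^*)\le\liminf_j J(\widehat H^j)\le V(0)$; hence $H^*$ is the optimizer, unique by the previous paragraph.

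For parts~(ii)--(iii), set $\xi:=\partial_x f(X^{H^*;o}_T,l(S^o_T))+\int_0^T\partial_x g(s,X^{H^*;o}_s,H^*_s)\,ds$ and $\eta:=\partial_y f(X^{H^*;o}_T,l(S^o_T))\nabla l(S^o_T)$. Using the gradient growth bounds of Remark~\ref{rem:objective}, $|H^*|\le K$, the linear growth of $l$, $\|X^{H^*;o}\|_{\mathscr S^p}<\infty$, and $2(r+1)<p$, one checks that $\xi\in L^2(\mathcal F_T;\mathbb R)$ and $\eta\in L^2(\mathcal F_T;\mathbb R^d)$. The square-integrable c\`adl\`ag martingales $M^Y_t:=\mathbb E[\xi\mid\mathcal F_t]$ and $M^{\mathcal Y,i}_t:=\mathbb E[\eta^i\mid\mathcal F_t]$ then admit a Kunita--Watanabe decomposition with respect to $W$: there are a unique $Z^*\in\mathscr H^2(\mathbb R^d)$ and a unique $L^*\in\mathscr M^2(\mathbb R)$ with $L^*_0=0$, strongly orthogonal to $\int_0^\cdot(Z^*_s)^\top dW_s$, such that $M^Y=M^Y_0+\int_0^\cdot(Z^*_s)^\top dW_s+L^*$, and analogously $(\mathcal Z^*,\mathcal L^*)$ componentwise for $M^{\mathcal Y}$. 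Putting $Y^*_t:=M^Y_t-\int_0^t\partial_x g(s,X^{H^*;o}_s,H^*_s)\,ds$ and $\mathcal Y^*:=M^{\mathcal Y}$ produces solutions of \eqref{eq:BSDE_re_x} and \eqref{eq:BSDE_re_y}; membership in $\mathscr S^2$ follows from Doob's maximal inequality together with the moment bounds above. Uniqueness is immediate: any two solutions give $\mathscr M^2$-martingales with the same terminal value $\xi$ (resp.\ $\eta^i$), hence coincide, and uniqueness of the orthogonal decomposition identifies the $Z$- and $L$-components.

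The delicate step is the existence in~(i): one must extract a limit of a minimizing sequence in a topology weak enough for compactness---$\mathcal A$ is only bounded, not compact, in strong $L^2$---yet strong enough that the controlled states $X^{H;o}$, which enter $J$ only through stochastic integrals, converge, so that the convex functional $J$ is lower semicontinuous along the limiting sequence. The combination of Mazur's lemma with the BDG-based $\mathscr S^2$-continuity of $H\mapsto X^{H;o}$ is precisely what bridges this gap; the remaining ingredients (convexity and strong convexity, the $L^p$-moment bookkeeping, and the Kunita--Watanabe decompositions) are routine.
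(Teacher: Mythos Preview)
Your proof is correct and takes essentially the same approach as the paper: a Koml\'os/Mazur-type argument to extract strongly $\mathbb{L}^2$-convergent convex combinations from a minimizing sequence, convexity plus Fatou (using $g,f\ge\underline C_0$) for lower semicontinuity, and the Galtchouk--Kunita--Watanabe decomposition for parts~(ii)--(iii). Your uniqueness argument---passing to the equivalent measure $\mathbb{Q}$ so that $X^{H^1;o}-X^{H^2;o}$ becomes a true martingale, and then using invertibility of $\sigma^o$ to go from $X^{H^1;o}_T=X^{H^2;o}_T$ to $H^1=H^2$---is in fact more explicit than the paper's one-line appeal to strict convexity of $f$ in $x$.
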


\begin{rem}
	In the context of solutions to BSDE's, throughout this article  we denote the solutions to \emph{multi-dimensional} BSDE's as in \eqref{eq:BSDE_re_y} by \emph{calligraphic} letters.
	In particular, \eqref{eq:BSDE_re_y} is to be  understood in the following sense: $\mathcal{Y}^*=(\mathcal{Y}^{*,1},\dots,\mathcal{Y}^{*,d})^\top$,  $\mathcal{Z}^*=(\mathcal{Z}^{*,1},\dots,\mathcal{Z}^{*,d})^\top$, and $\mathcal{L}^*=(\mathcal{L}^{*,1},\dots,\mathcal{L}^{*,d})^\top$ are vectors consisting of processes, and for every $1\leq i \leq d$,
	\begin{align*}
		\quad\quad {\mathcal{Y}}_t^{*,i} = \partial_{y} f\big(X_T^{H^*;o},h(S_T^o)\big)\partial_{s_i}l(S_T^o)   - \int_t^T ({{\cal Z}}^{*,i}_u)^\top dW_u-({{\cal L}}_T^{*,i} -{{\cal L}}_t^{*,i}),\quad t\in [0,T].
	\end{align*} 
\end{rem}

Let us note that the proof of Proposition \ref{pro:main0} is relatively standard: part (i) follows from a Koml\'os-type argument and parts (ii)  and (iii) follow from employing the Galtchouk-Kunita-Watanabe (GKW) decomposition. The complete proof can be found in Section \ref{sec:proof:pro:main0}.

Before we proceed to state our main result, let us briefly comment on some properties of the BSDE-solution in Proposition \ref{pro:main0}.

\begin{lem}\label{lem:explicit.Y.Z} Suppose that Assumptions \ref{as:posterior_refer}, \ref{dfn:control}, and \ref{as:objective} are satisfied and let $(Y^*,Z^*,L^*)$ be the unique solution of \eqref{eq:BSDE_re_x}.
Then, for every $t\in[0,T]$, $\mathbb{P}$-a.s.,
	\begin{align*}
		Y_t^* &=\mathbb{E}\bigg[\partial_{x}f\big(X_T^{H^*;o},l(S_T^o)\big)+\int_t^T\partial_x g\big(s,X_s^{H^*;o},H_s^*\big)ds \, \bigg|\, {\cal F}_t\bigg],\\
		Z_t^* &= \frac{d}{dt}\Big(\langle Y^*,W^1\rangle_t,\cdots,\langle  Y^*,W^d\rangle_t\Big)^\top.
	\end{align*}
	 Moreover, if $(\mathcal{Y}^*,\mathcal{Z}^*,\mathcal{L}^*)$ is the unique solution of  \eqref{eq:BSDE_re_y}, then for every  $t\in[0,T]$, $\mathbb{P}$-a.s., 
	\begin{align*}
		\mathcal{Y}^\ast_t
		&=\mathbb{E}\Big[\partial_{y}f\big(X_T^{H^*;o},l(S_T^o)\big)\nabla_s l(S_T^o)\, \Big|\, {\cal F}_t\Big],\\
		\mathcal{Z}_t^{*,i}
		&= \frac{d}{dt}\Big(\langle  \mathcal{Y}^{*,i},W^1\rangle_t,\cdots,\langle \mathcal{Y}^{*,i},W^d\rangle_t\Big)^\top, \quad i=1,\dots,d.
	\end{align*}
\end{lem}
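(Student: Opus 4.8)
The plan is to read off the two identities directly from the structure of the BSDEs \eqref{eq:BSDE_re_x} and \eqref{eq:BSDE_re_y}, combined with the Galtchouk--Kunita--Watanabe (GKW) orthogonality built into their formulation. First I would handle the conditional-expectation formulas for $Y^*$ and $\mathcal{Y}^*$. Starting from \eqref{eq:BSDE_re_x}, I would add $\int_0^t\partial_xg(s,X_s^{H^*;o},H_s^*)\,ds$ to both sides so that
\begin{align*}
Y_t^*+\int_0^t\partial_xg\big(s,X_s^{H^*;o},H_s^*\big)ds
&=\partial_xf\big(X_T^{H^*;o},l(S_T^o)\big)+\int_0^T\partial_xg\big(s,X_s^{H^*;o},H_s^*\big)ds\\
&\quad-\int_t^T(Z_u^*)^\top dW_u-(L_T^*-L_t^*).
\end{align*}
The key observation is that, by Proposition \ref{pro:main0}\,(ii), $\big(\int_0^t(Z_s^*)^\top dW_s\big)_{t\in[0,T]}$ and $L^*$ are both $\mathscr{M}^2(\mathbb{R})$-martingales (the latter with $L_0^*=0$), so the process on the left-hand side is a true $(\mathbb{F},\mathbb{P})$-martingale. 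Taking ${\cal F}_t$-conditional expectations of the right-hand side, the stochastic-integral and $L^*$-martingale increments vanish, and since the left-hand side at time $t$ equals $Y_t^*+\int_0^t\partial_xg(\cdots)ds$, subtracting the ${\cal F}_t$-measurable term $\int_0^t\partial_xg(\cdots)ds$ gives exactly the claimed formula for $Y_t^*$. The same argument applied verbatim to \eqref{eq:BSDE_re_y} (which has no intermediate integral term) yields the formula for $\mathcal{Y}_t^*$; integrability of the terminal conditions in $L^1$ — needed to justify conditioning — follows from the growth bounds in Remark \ref{rem:objective} together with $\sup_{(b,\sigma)\in{\cal B}^\varepsilon}\|S^{b,\sigma}\|_{\mathscr{S}^p}<\infty$ (Remark \ref{rem:p_char}) and $H^*\in{\cal A}$ being bounded.

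For the identities expressing $Z^*$ (and $\mathcal{Z}^{*,i}$) as the density of the predictable covariation with $W$, I would argue as follows. Write $M_t:=Y_t^*+\int_0^t\partial_xg(s,X_s^{H^*;o},H_s^*)ds$ for the martingale identified above; from the BSDE, $M$ has the GKW-type decomposition $M_t=M_0+\int_0^t(Z_s^*)^\top dW_s+L_t^*$ with $L^*$ strongly orthogonal to the Brownian stochastic integral, hence $\langle L^*,W^j\rangle\equiv0$ for each coordinate $j$. Consequently, for each $j=1,\dots,d$,
\begin{align*}
\langle Y^*,W^j\rangle_t
=\langle M,W^j\rangle_t
=\Big\langle \textstyle\int_0^\cdot(Z_s^*)^\top dW_s,\,W^j\Big\rangle_t
=\int_0^t Z_s^{*,j}\,ds,
\end{align*}
using that $\langle \int_0^\cdot(Z_s^*)^\top dW_s,W^j\rangle_t=\sum_{k}\int_0^t Z_s^{*,k}\,d\langle W^k,W^j\rangle_s=\int_0^t Z_s^{*,j}\,ds$ by independence of the coordinates of $W$, and that the finite-variation term $\int_0^t\partial_xg(\cdots)ds$ contributes nothing to the covariation with the continuous martingale $W^j$. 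Differentiating in $t$ gives $Z_t^{*,j}=\frac{d}{dt}\langle Y^*,W^j\rangle_t$, which is the asserted vector identity. Applying this coordinate-wise to each component $\mathcal{Y}^{*,i}$ of the solution of \eqref{eq:BSDE_re_y} — whose GKW decomposition reads $\mathcal{Y}_t^{*,i}=\mathcal{Y}_0^{*,i}+\int_0^t(\mathcal{Z}_s^{*,i})^\top dW_s+\mathcal{L}_t^{*,i}$ with $\mathcal{L}^{*,i}\perp\int_0^\cdot(\mathcal{Z}_s^{*,i})^\top dW_s$ — yields $\mathcal{Z}_t^{*,i}=\frac{d}{dt}(\langle\mathcal{Y}^{*,i},W^1\rangle_t,\dots,\langle\mathcal{Y}^{*,i},W^d\rangle_t)^\top$.

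I do not anticipate a serious obstacle here; the lemma is essentially a bookkeeping consequence of the GKW decomposition and the martingale property. The only point requiring a little care is the justification that $\langle L^*,W^j\rangle\equiv0$: this is where strong orthogonality of $L^*$ to $\int_0^\cdot(Z_s^*)^\top dW_s$ is used, and one should note that strong orthogonality to all Brownian integrals (equivalently, $L^*$ having no continuous Brownian component) forces the predictable covariation with each $W^j$ to vanish; I would cite the characterization of strong orthogonality already referenced in Proposition \ref{pro:main0} (namely that $[L^*,W^j]$ is a uniformly integrable martingale, hence its compensator $\langle L^*,W^j\rangle$ is zero). A secondary minor point is measurability/integrability so that all covariations are well defined, which is immediate from $(Y^*,Z^*,L^*)\in\mathscr{S}^2\times\mathscr{H}^2\times\mathscr{M}^2$ and the analogous membership in part (iii).
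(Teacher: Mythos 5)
Your proof is correct and follows essentially the same route as the paper: conditional expectations on the BSDE to read off $Y^*$ and $\mathcal{Y}^*$, and the vanishing of $\langle L^*,W^j\rangle$ (resp.\ $\langle\mathcal{L}^{*,i},W^j\rangle$) together with the It\^o-isometry identity $\langle\int_0^\cdot(Z_s^*)^\top dW_s,W^j\rangle_t=\int_0^t Z_s^{*,j}\,ds$ to read off $Z^*$ and $\mathcal{Z}^{*,i}$. One small precision worth fixing: the relevant fact is that $L^*$ is strongly orthogonal to each $W^j$ --- which comes from the GKW construction in Lemma~\ref{lem:BSDE}, not from the statement in Proposition~\ref{pro:main0}\,(ii) that $L^*$ is orthogonal to $\int_0^\cdot(Z_s^*)^\top dW_s$ (if $Z^*\equiv 0$ the latter is vacuous) --- and you should cite Lemma~\ref{lem:BSDE} rather than lean on ``orthogonality to all Brownian integrals'' as a reformulation of that proposition.
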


Under certain (rather strong) conditions on the regularity of $Y^*$ and $\mathcal{Y}^*$, Lemma \ref{lem:explicit.Y.Z} and It\^o's formula ensure that  $Z^*$ and $\mathcal{Z}^*$ can be calculated via a  Feynman-Kac representation.
To formulate the result denote by $\mathcal{C}^{1,2,2}$ the set of all continuous functions from $[0,T]\times \mathbb{R}\times \mathbb{R}^d$ to $\mathbb{R}$  which are continuously differentiable on $[0,T)$ and twice continuously differentiable on $\mathbb{R}\times\mathbb{R}^d$. 

\begin{cor}\label{cor:regularity} 
	Suppose that Assumptions \ref{as:posterior_refer}, \ref{dfn:control}, and \ref{as:objective} are satisfied. Let  $H^*$ be the unique optimizer of $V(0)$ defined in Proposition \ref{pro:main0}\;(i).
	 Moreover,  assume that there are $J\in \mathcal{C}^{1,2,2} $ and $\mathcal{J}=( {\cal J}^1,\dots,{\cal J}^d)^\top \in (\mathcal{C}^{1,2,2})^d$ such that for every $t\in[0,T]$, $\mathbb{P}$-a.s.
	\begin{align}\label{eq:condi_forms}
		\begin{aligned}
			Y_t^* = J\big(t,X_t^{H^*;o},S_t^o\big),\qquad
			\mathcal{Y}_t^*=\mathcal{J} \big(t,X_t^{H^*;o},S_t^o\big).
		\end{aligned}
	\end{align}
	Then, for every $t\in[0,T)$, $\mathbb{P}$-a.s., 
	\begin{align*}
		\begin{aligned}
			Z_t^*
			&=(\sigma_t^o)^\top \big[\partial_xJ(t,X_t^{H^*;o},S_t^o)H_t^* + \nabla_s J(t,X_t^{H^*;o},S_t^o) \big],\\
			\mathcal{Z}^{*,i}_t
			&=(\sigma_t^o)^\top \big[\partial_x \mathcal{J}^i(t,X_t^{H^*;o},S_t^o)H_t^* + \nabla_s \mathcal{J}^i(t,X_t^{H^*;o},S_t^o) \big], \quad i=1,\dots,d,
		\end{aligned}
	\end{align*}
	where $\nabla_s \mathcal{J}^i$ denotes the gradient of $\mathcal{J}^i(t,x,s)$ with respect $s$.
\end{cor}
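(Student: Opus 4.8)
The plan is to combine the probabilistic characterization of $Z^*$ and $\mathcal{Z}^{*,i}$ from Lemma \ref{lem:explicit.Y.Z} — namely that $Z_t^*$ is (the density of) the predictable covariation $\langle Y^*, W\rangle$ — with an application of It\^o's formula to the assumed Feynman–Kac representation \eqref{eq:condi_forms}. First I would fix $i$ and note that, since $J\in\mathcal{C}^{1,2,2}$, I may apply It\^o's formula to the semimartingale $t\mapsto J(t,X_t^{H^*;o},S_t^o)$, where $X^{H^*;o}$ and $S^o$ are the It\^o processes
\begin{align*}
	dX_t^{H^*;o} &= (H_t^*)^\top b_t^o\,dt + (H_t^*)^\top \sigma_t^o\,dW_t, \qquad
	dS_t^o = b_t^o\,dt + \sigma_t^o\,dW_t.
\end{align*}
The It\^o expansion of $Y^* = J(\cdot, X^{H^*;o}, S^o)$ then has a finite-variation (drift) part and a local-martingale part whose $dW_t$-integrand is exactly $(\sigma_t^o)^\top\big[\partial_x J\,H_t^* + \nabla_s J\big]$ (all evaluated at $(t,X_t^{H^*;o},S_t^o)$); here one uses the chain rule, writing the cross terms $\partial_x J\cdot (H_t^*)^\top\sigma_t^o\,dW_t$ and $(\nabla_s J)^\top \sigma_t^o\,dW_t$ and collecting them into $(\sigma_t^o)^\top\big[\partial_x J\,H_t^* + \nabla_s J\big]^\top dW_t$ by transposing.

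Next I would invoke uniqueness of the Doob–Meyer / semimartingale decomposition of $Y^*$: from \eqref{eq:BSDE_re_x} we know $Y^*$ has the (continuous) martingale part $\int_0^\cdot (Z_u^*)^\top dW_u$ plus the orthogonal martingale $L^*$; comparing continuous-local-martingale parts and using that the It\^o expansion above produces a continuous martingale part driven purely by $W$, the $dW$-integrand must coincide $\mathbb{P}\otimes dt$-a.e. — equivalently, computing $\langle Y^*, W^j\rangle_t$ from the It\^o expansion and matching with Lemma \ref{lem:explicit.Y.Z} gives $Z_t^{*,j} = \big((\sigma_t^o)^\top[\partial_x J\,H_t^* + \nabla_s J]\big)^j$ for each $j$, i.e. the claimed formula for $Z_t^*$. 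The argument for $\mathcal{Z}^{*,i}$ is identical componentwise, applying It\^o's formula to each $\mathcal{J}^i\in\mathcal{C}^{1,2,2}$ and matching against \eqref{eq:BSDE_re_y} via Lemma \ref{lem:explicit.Y.Z}.

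I expect the only genuine subtlety — rather than a true obstacle — to be the justification that It\^o's formula applies and that the resulting martingale part is genuinely a (true, or at least local) martingale whose covariation with $W$ is well defined: this is handled by the standing integrability $(Y^*,Z^*,L^*)\in\mathscr{S}^2\times\mathscr{H}^2\times\mathscr{M}^2$ together with $S^o, X^{H^*;o}\in\mathscr{S}^p$ (Remark \ref{rem:p_char}, Assumption \ref{as:posterior_refer}) and the boundedness of $H^*$ (Assumption \ref{dfn:control}(iii)), which control the stochastic integrands after applying It\^o's formula; one may also localize and then pass to the limit using these bounds. Everything else is the routine chain-rule bookkeeping of collecting the $dW_t$ terms, so the proof is short once the representation \eqref{eq:condi_forms} is assumed.
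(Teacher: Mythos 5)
Your plan is correct and follows essentially the same route as the paper: apply It\^o's formula to $J(t,X_t^{H^*;o},S_t^o)$ (and to each $\mathcal{J}^i$), then identify $Z^*$ and $\mathcal{Z}^{*,i}$ via the covariation characterization in Lemma~\ref{lem:explicit.Y.Z}. One small point to keep sharp when writing it up: ``comparing continuous local martingale parts'' alone does not immediately isolate the $dW$-integrand because the BSDE martingale part is $\int (Z^*)^\top dW + L^*$ with $L^*$ possibly continuous; the decisive step is precisely the one you state as the ``equivalent'' route, namely taking $\langle\,\cdot\,,W^j\rangle$ of both sides so that the strong orthogonality of $L^*$ to $W$ (already encoded in Lemma~\ref{lem:explicit.Y.Z}) kills its contribution.
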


The proofs of Lemma \ref{lem:explicit.Y.Z} and Corollary \ref{cor:regularity}  are presented in  Section \ref{sec:remaining.proofs}, where we also provide some sufficient conditions for the regularity assumption in Corollary \ref{cor:regularity}.

\subsection{Main results} 

With all this notation set in place, we present the main result of this article pertaining the characterization of the behavior of 
\[
		V(\varepsilon)= 
		\inf_{H\in {\cal A}} {\cal V}(H,\varepsilon)
		= \inf_{H\in {\cal A}}\sup_{(b,\sigma)\in \mathcal{B}^\varepsilon}\mathbb{E} \bigg[\int_0^Tg\big(t,X_t^{H;b,\sigma},H_t\big)dt+ f \big(X_T^{H;b,\sigma},l(S_T^{b,\sigma})\big)\bigg]
\]
for small $\varepsilon$.
Recall that $\mathcal{B}^\varepsilon$  and ${\cal A}$ are defined in \eqref{eq:def.ball} and \eqref{dfn:control}, respectively.

\begin{thm}\label{thm:main}
	Suppose that Assumptions \ref{as:posterior_refer}, \ref{dfn:control}, and \ref{as:objective} are satisfied and set $q:=\frac{p}{p-1}$ to be the conjugate H\"older exponent of $p> 3$ (given in Assumption \ref{as:posterior_refer}). 
	Let  $H^*$ be the unique optimizer of $V(0)$ (see Proposition \ref{pro:main0}\;(i)) and let $(Y^*,Z^*)$ and $(\mathcal{Y}^*,\mathcal{Z}^*)$ be the first two components of the solutions of \eqref{eq:BSDE_re_x} and \eqref{eq:BSDE_re_y}, respectively. 
	Then, as $\varepsilon \downarrow 0$,
	\[ V(\varepsilon) = V(0) + \varepsilon V'(0) + O(\varepsilon^2),\]
	where
	\begin{align*}
	V'(0) 
	&= \gamma \lVert Y^*H^*+\mathcal{ Y}^* \rVert_{\mathbb{L}^q}   +  \eta \lVert Z^* (H^*)^\top + \mathcal{ Z}^* \rVert_{\mathbb{H}^q} \\
	&=  \gamma \mathbb{E}\Bigg[ \int_0^T |Y_t^*H_t^*+\mathcal{Y}^*_t |^q \,dt \Bigg]^{1/q}+
	 \eta \mathbb{E}\Bigg[ \bigg( \int_0^T \| Z_t^* (H_t^*)^\top + \mathcal{ Z}^*_t \|_{\rm F}^2 \,dt \bigg)^{q/2} \Bigg]^{1/q}. 
	\end{align*}
\end{thm}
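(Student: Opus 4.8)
The plan is to prove the expansion through matching upper and lower bounds, each obtained by exploiting the two-player structure of $V(\varepsilon)=\inf_H\sup_{(b,\sigma)}$.

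For the \emph{upper bound}, I would freeze the inner player's strategy and the outer player's strategy as follows: take the baseline optimizer $H^*$ for the outer infimum and estimate ${\cal V}(H^*,\varepsilon)$ from above. Writing $X^{H^*;b,\sigma}_T-X^{H^*;o}_T$ and $S^{b,\sigma}_T-S^o_T$ as the stochastic/Lebesgue integrals of $(b-b^o)$ and $(\sigma-\sigma^o)$ against $H^*$ and against $1$, a first-order Taylor expansion of $g$ and $f$ (using the $C^2$ and growth hypotheses in Assumption~\ref{as:objective}, together with Remark~\ref{rem:objective}) gives
\begin{align*}
 {\cal V}(H^*,\varepsilon)-V(0)
 &\le \sup_{(b,\sigma)\in\mathcal{B}^\varepsilon}\mathbb{E}\bigg[\int_0^T \partial_x g(\cdot)\, (H^*\cdot S^{b-b^o,\sigma-\sigma^o})_t\,dt \\
 &\qquad\qquad\quad +\,\partial_x f(\cdot)\,(H^*\cdot S^{b-b^o,\sigma-\sigma^o})_T+\partial_y f(\cdot)\,\langle\nabla l(S^o_T),S^{b-b^o,\sigma-\sigma^o}_T\rangle\bigg]+O(\varepsilon^2),
\end{align*}
where all derivatives are evaluated at the baseline point. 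The error terms are $O(\varepsilon^2)$ by Hölder's inequality together with the $\mathscr{S}^p$-bounds of Remark~\ref{rem:p_char} and the fact that the perturbations are $O(\varepsilon)$ in $\mathbb{L}^p$, resp.\ $\mathbb{H}^p$; here it is crucial that $p>3$ so that the products of (powers of) $X^{H^*;o}$, $S^o$, $H^*$ and the perturbations remain integrable. Next I would use the defining BSDEs \eqref{eq:BSDE_re_x}--\eqref{eq:BSDE_re_y}: by Lemma~\ref{lem:explicit.Y.Z}, $Y^*$ and ${\cal Y}^*$ are precisely the conditional-expectation processes of the terminal data appearing above, so a Fubini/tower-property argument (moving the conditional expectations inside, recalling $\mathbb{E}[\int_0^T \langle Z^*_t, dW_t\rangle\cdots]$ vanishes against the orthogonal martingale parts) collapses the linear functional into
\[
 \mathbb{E}\bigg[\int_0^T \langle Y^*_tH^*_t+{\cal Y}^*_t,\ b_t-b^o_t\rangle\,dt+\int_0^T\langle Z^*_t(H^*_t)^\top+{\cal Z}^*_t,\ \sigma_t-\sigma^o_t\rangle_{\operatorname{F}}\,dt\bigg].
\]
Taking the supremum over $\mathcal{B}^\varepsilon$ and applying the $L^p$--$L^q$ duality (the constraint is exactly a ball in $\mathbb{L}^p$ resp.\ $\mathbb{H}^p$) yields $\varepsilon(\gamma\lVert Y^*H^*+{\cal Y}^*\rVert_{\mathbb{L}^q}+\eta\lVert Z^*(H^*)^\top+{\cal Z}^*\rVert_{\mathbb{H}^q})$, giving $V(\varepsilon)\le V(0)+\varepsilon V'(0)+O(\varepsilon^2)$.

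For the \emph{lower bound}, I would first argue that any $\varepsilon$-dependent family of near-optimizers $H^\varepsilon$ for $V(\varepsilon)$ stays close (in the appropriate sense) to $H^*$ as $\varepsilon\downarrow0$ — this uses the strong convexity from Assumption~\ref{as:objective}(iv) ($\partial_{xx}f\ge \underline C_2>0$ plus convexity of $g$), which forces the value functional to grow quadratically around its unique minimizer $H^*$; hence ${\cal V}(H^\varepsilon,0)-V(0)\gtrsim \lVert X^{H^\varepsilon;o}-X^{H^*;o}\rVert^2$, and since ${\cal V}(H^\varepsilon,0)\le {\cal V}(H^\varepsilon,\varepsilon)\le V(\varepsilon)+o(\varepsilon)=V(0)+O(\varepsilon)$, we get a control of order $\sqrt{\varepsilon}$ on the deviation, which is enough to push Taylor remainders to $o(\varepsilon)$ or even $O(\varepsilon^2)$ after the dust settles. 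Then for each $\varepsilon$ I choose, \emph{inside} the supremum, the worst-case direction $(b^\varepsilon,\sigma^\varepsilon)\in\mathcal{B}^\varepsilon$ that asymptotically realizes the $L^q$-duality for the functional built from $H^*$ (not $H^\varepsilon$ — a careful perturbation argument shows replacing $H^\varepsilon$ by $H^*$ in the linear term costs only $o(\varepsilon)$, again by the $\sqrt\varepsilon$ closeness and Cauchy–Schwarz). Plugging this fixed $(b^\varepsilon,\sigma^\varepsilon)$ into ${\cal V}(H^\varepsilon,\varepsilon)$ and Taylor-expanding as above yields ${\cal V}(H^\varepsilon,\varepsilon)\ge V(0)+\varepsilon V'(0)-O(\varepsilon^2)$, hence $V(\varepsilon)\ge V(0)+\varepsilon V'(0)-O(\varepsilon^2)$. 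Combining the two bounds gives the claim.

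\textbf{Main obstacle.} The delicate point is the lower bound: one must simultaneously (a) control the near-optimizers $H^\varepsilon$ quantitatively — showing $\lVert H^\varepsilon-H^*\rVert$ (in a norm strong enough to feed the growth estimates of Remark~\ref{rem:objective}, and note $H$ lives in the bounded set ${\cal K}$, which helps) is $O(\sqrt\varepsilon)$, which rests essentially on the strong-convexity Assumption~\ref{as:objective}(iv); and (b) verify that the linear functional $H\mapsto$ (the first-order term) is continuous enough in $H$ near $H^*$, uniformly in $(b,\sigma)\in\mathcal{B}^\varepsilon$, that swapping $H^\varepsilon\leftrightarrow H^*$ in it is harmless at order $\varepsilon$. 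Making the error bookkeeping in (a)--(b) uniform and of the stated $O(\varepsilon^2)$ order — rather than merely $o(\varepsilon)$ — is where the bulk of the technical work lies, and is exactly where the integrability margin afforded by $p>3$ (and the constraint $r<\min\{\tfrac{p-2}{2},p-3\}$ in Assumption~\ref{as:objective}(ii)) is consumed.
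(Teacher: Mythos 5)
Your upper-bound argument mirrors the paper's: Taylor-expand ${\cal V}(H^*,\varepsilon)$ around the baseline, rewrite the first-order term as a linear pairing against $(b-b^o,\sigma-\sigma^o)$ by means of the GKW/BSDE representation (Lemma~\ref{lem:inner_prod}), take the supremum via $L^p$--$L^q$ duality, and absorb the Hessian remainders into $O(\varepsilon^2)$ using the $p$-moment bounds of Lemma~\ref{lem:p_integrability}; this is Lemma~\ref{lem:up_bound}.

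For the lower bound you take a genuinely different route, and it contains a gap. The paper, in Lemma~\ref{lem:low_bound}, chooses the near-maximizing direction $(\widetilde b^\varepsilon,\widetilde\sigma^\varepsilon)$ as a dual element for the linear functional built from the \emph{near-optimizer} $H^\varepsilon$, producing directly
$V(\varepsilon)-V(0)\geq \varepsilon\big(\gamma\lVert Y^\varepsilon H^\varepsilon+{\cal Y}^\varepsilon\rVert_{\mathbb{L}^q}+\eta\lVert Z^\varepsilon(H^\varepsilon)^\top+{\cal Z}^\varepsilon\rVert_{\mathbb{H}^q}\big)-C\varepsilon^2$
with no swap required; one then only needs the purely qualitative stability of $(Y^\varepsilon,Z^\varepsilon,{\cal Y}^\varepsilon,{\cal Z}^\varepsilon)$ in $H^\varepsilon$ (Lemmas~\ref{lem:stability},~\ref{lem:stability_1},~\ref{lem:strong_H}), applied along a subsequence realizing the liminf. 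You instead propose to choose the dual direction built from $H^*$ and then quantify the cost of swapping $H^\varepsilon\leftrightarrow H^*$, for which you invoke a rate $\lVert H^\varepsilon-H^*\rVert=O(\sqrt\varepsilon)$. Strong convexity does give $\lVert X^{H^\varepsilon;o}_T-X^{H^*;o}_T\rVert_{L^2}^2=O(\varepsilon)$ (precisely the calculation in the proof of Lemma~\ref{lem:strong_H}), but passing from this to a rate on $\lVert H^\varepsilon-H^*\rVert_{\mathbb{L}^2}$ is not available under Assumption~\ref{as:posterior_refer}, which only requires $\sigma^o$ to be $\mathbb{P}\otimes dt$-a.e.\ invertible, not uniformly elliptic. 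The paper's Lemma~\ref{lem:stability} goes from $\lvert X^{H^n;o}_T-X^{H^\star;o}_T\rvert\to 0$ in probability to $\lVert H^n-H^\star\rVert_{\mathbb{L}^\beta}\to 0$ via a Girsanov change of measure, the It\^o isometry under $\mathbb{Q}$, Vitali's theorem, and then dominated convergence; this chain is genuinely qualitative and does not propagate rates, and even under $\mathbb{Q}$ one only controls $\lVert(\sigma^o)^\top(H^\varepsilon-H^*)\rVert$, which without ellipticity gives no control on $H^\varepsilon-H^*$ itself. (Also note that even if the $O(\sqrt\varepsilon)$ rate held, your swap error would be $\lesssim \varepsilon\cdot\sqrt\varepsilon=\varepsilon^{3/2}$, i.e.\ $o(\varepsilon)$ but not $O(\varepsilon^2)$, so this route does not recover a second-order rate for the lower bound either.) The paper's choice of dual direction at $H^\varepsilon$, rather than at $H^*$, is exactly what lets it dispense with any quantitative control of $H^\varepsilon-H^*$ and rely on compactness/stability alone.
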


We emphasize that $Y^*,Z^*,\mathcal{Y}^*,\mathcal{ Z}^*$ are given explicitly in Lemma \ref{lem:explicit.Y.Z}.
In particular, the statements made in the introduction on the form of $V'(0)$ follow from a combination of Theorem~\ref{thm:main} and Lemma \ref{lem:explicit.Y.Z}. 

\vspace{0.5em}

Next, recall that $H^*$ is the unique optimizer for $V(0)$ (given in Proposition \ref{pro:main0}\;(i)) and for any $\varepsilon\geq0$, consider
\begin{align}\label{eq:value_worst}
	V^\ast(\varepsilon):= {\cal V}(H^*,\varepsilon) =\sup_{(b,\sigma)\in \mathcal{B}^\varepsilon} \mathbb{E} \bigg[\int_0^Tg\big(t,X_t^{H^*;b,\sigma},H^*_t\big)dt+ f \big(X_T^{H^*;b,\sigma},l(S_T^{b,\sigma})\big)\bigg].
\end{align}
Therefore, $V^\ast(\varepsilon)$ represents the worst-case value when the market participant sticks to the optimal strategy $H^\ast$ calculated based on $b^o$ and $\sigma^o$ while the actual parameters lie within $ \mathcal{B}^\varepsilon$. 
As it happens, the values of  $V^\ast(\varepsilon)$ and $V(\varepsilon)$ are equal up to a second-order correction term.

\begin{thm}\label{thm:main2}
	Suppose that Assumptions \ref{as:posterior_refer}, \ref{dfn:control}, and \ref{as:objective} are satisfied. 
	Then, as $\varepsilon \downarrow 0$,
	\[
	V^\ast(\varepsilon)=V(\varepsilon)+O(\varepsilon^2).
	\]
	In particular, $(V^\ast)'(0)=V'(0)$.
\end{thm}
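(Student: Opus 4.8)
The plan is to leverage Theorem~\ref{thm:main} together with an elementary sandwiching argument. By definition of the infimum, $V(\varepsilon) = \inf_{H\in\mathcal A}\mathcal V(H,\varepsilon) \leq \mathcal V(H^*,\varepsilon) = V^*(\varepsilon)$ for every $\varepsilon \geq 0$, which immediately gives the lower bound $V^*(\varepsilon) \geq V(\varepsilon)$. For the matching upper bound, the key observation is that $H^*$ is a \emph{specific} admissible strategy, so it suffices to show that using $H^*$ in the worst-case problem $V(\cdot)$ is only suboptimal by an $O(\varepsilon^2)$ amount. In other words, I would prove $V^*(\varepsilon) = \mathcal V(H^*,\varepsilon) \leq V(0) + \varepsilon V'(0) + O(\varepsilon^2)$, which combined with $V(\varepsilon) \geq V(0) + \varepsilon V'(0) - O(\varepsilon^2)$ from Theorem~\ref{thm:main} (the full two-sided expansion) yields $V^*(\varepsilon) - V(\varepsilon) \leq O(\varepsilon^2)$.

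The heart of the matter is therefore a \emph{one-sided} expansion of $\varepsilon \mapsto \mathcal V(H^*,\varepsilon)$ for the fixed strategy $H^*$. The crucial point is that $\mathcal V(H^*,0) = V(0)$ and — this is where I expect the real content to lie — the first-order term in $\varepsilon$ of $\mathcal V(H^*,\varepsilon)$ coincides with $V'(0)$. Heuristically this is the ``envelope'' phenomenon: the derivative of the value function with respect to the perturbation parameter does not see the derivative of the optimizer, because $H^*$ is a critical point of $H \mapsto \mathcal V(H,0)$. Concretely, I would revisit the proof of Theorem~\ref{thm:main}: in establishing the upper bound $V(\varepsilon) \leq V(0) + \varepsilon V'(0) + O(\varepsilon^2)$ one necessarily plugs in \emph{some} near-optimal strategy and performs a first-order (in $\varepsilon$) Taylor expansion of the inner supremum; the natural candidate there is exactly $H^*$, and the argument should show that the first-order perturbation of $\mathcal V(H^*,\varepsilon)$ contributes precisely $\varepsilon(\gamma\|Y^*H^* + \mathcal Y^*\|_{\mathbb L^q} + \eta\|Z^*(H^*)^\top + \mathcal Z^*\|_{\mathbb H^q})$, with all remaining terms of order $\varepsilon^2$ by the quadratic growth bounds in Assumption~\ref{as:objective}(ii) and the integrability from Assumption~\ref{as:posterior_refer} and Remark~\ref{rem:p_char}.

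More precisely, the steps I would carry out are: (1) Write $\mathcal V(H^*,\varepsilon) = \sup_{(b,\sigma)\in\mathcal B^\varepsilon}\mathbb E[\cdots]$ and, for any $(b,\sigma)\in\mathcal B^\varepsilon$, expand the controlled process $X^{H^*;b,\sigma} = X^{H^*;o} + (H^*\cdot(S^{b,\sigma}-S^o))$ and the payoff $l(S_T^{b,\sigma}) = l(S_T^o) + \nabla l(S_T^o)^\top(S_T^{b,\sigma}-S_T^o) + O(|S_T^{b,\sigma}-S_T^o|^2)$; note $\|S^{b,\sigma}-S^o\|_{\mathscr S^p} = O(\varepsilon)$ uniformly over $\mathcal B^\varepsilon$. (2) Taylor-expand $g$ and $f$ around $(X^{H^*;o},H^*)$ and $(X^{H^*;o}_T,l(S_T^o))$ respectively to first order, controlling the remainder by the growth bounds; the linear terms, after using the martingale (tower) representations of $Y^*$ and $\mathcal Y^*$ from Lemma~\ref{lem:explicit.Y.Z} and the orthogonal decomposition defining $Z^*,\mathcal Z^*$, reduce to $\mathbb E[\int_0^T (Y_t^*H_t^* + \mathcal Y_t^*)^\top (b_t - b_t^o)\,dt] + \mathbb E[\int_0^T \langle Z_t^*(H_t^*)^\top + \mathcal Z_t^*, \sigma_t - \sigma_t^o\rangle_{\mathrm F}\,dt]$. (3) Take the supremum over $(b,\sigma)\in\mathcal B^\varepsilon$: by Hölder's inequality (and its sharpness / the fact that the constraint balls in $\mathbb L^p$, $\mathbb H^p$ are exactly dual to the $\mathbb L^q$, $\mathbb H^q$ norms), the supremum of the linear part is $\varepsilon(\gamma\|Y^*H^* + \mathcal Y^*\|_{\mathbb L^q} + \eta\|Z^*(H^*)^\top + \mathcal Z^*\|_{\mathbb H^q}) = \varepsilon V'(0)$ up to $O(\varepsilon^2)$ from the remainder estimates. (4) Conclude $V^*(\varepsilon) \leq V(0) + \varepsilon V'(0) + O(\varepsilon^2)$, combine with the trivial $V^*(\varepsilon)\geq V(\varepsilon)$ and the lower expansion of $V(\varepsilon)$ from Theorem~\ref{thm:main}, and read off $V^*(\varepsilon) = V(\varepsilon) + O(\varepsilon^2)$; the statement $(V^*)'(0) = V'(0)$ then follows since both one-sided derivatives agree with $V'(0)$.

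The main obstacle will be step (3): establishing that plugging the \emph{fixed} strategy $H^*$ into the worst-case problem really does recover the same first-order coefficient $V'(0)$, rather than something smaller. This is exactly the place where one must use optimality of $H^*$ for $V(0)$ — concretely, that the first-order variation of $H\mapsto\mathcal V(H,0)$ at $H^*$ vanishes in admissible directions, which is what makes the ``cross term'' between perturbing $H$ and perturbing $(b,\sigma)$ negligible (of order $\varepsilon^2$). In practice I anticipate this is already contained in, or is a minor rearrangement of, the proof of the upper bound in Theorem~\ref{thm:main}, since that proof must produce some explicit near-optimizer achieving the rate $\varepsilon V'(0) + O(\varepsilon^2)$; the work here is to verify that $H^*$ itself is such a near-optimizer for every small $\varepsilon$, uniformly, using only the integrability afforded by $p>3$ and the polynomial growth of order $r+2 < p$ from Assumption~\ref{as:objective}.
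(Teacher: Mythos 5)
Your approach matches the paper's: the upper bound $V^*(\varepsilon) \le V(0) + \varepsilon V'(0) + C\varepsilon^2$ is established in Remark~\ref{rem:auxiliary} by noting that the second-order Taylor expansion with the fixed strategy $H^*$ in the proof of Lemma~\ref{lem:up_bound} applies verbatim to $\mathcal V(H^*,\cdot) = V^*(\cdot)$, and this is combined with the trivial $V^*(\varepsilon) \ge V(\varepsilon)$ and the two-sided expansion of $V(\varepsilon)$ from Theorem~\ref{thm:main}. One small clarification: in the fixed-$H^*$ expansion there is no ``cross term'' between perturbing $H$ and perturbing $(b,\sigma)$ (since $H$ is never varied); the optimality of $H^*$ enters only through Theorem~\ref{thm:main} itself, whose lower-bound argument (Lemma~\ref{lem:FOC} together with Lemmas~\ref{lem:low_bound} and \ref{lem:strong_H}) is what forces the first-order coefficient $\gamma\lVert Y^*H^*+\mathcal Y^*\rVert_{\mathbb L^q} + \eta\lVert Z^*(H^*)^\top+\mathcal Z^*\rVert_{\mathbb H^q}$ produced by the Taylor expansion to equal $V'(0)$ rather than exceed it.
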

\begin{rem}
	Theorem \ref{thm:main} builds on  Assumptions \ref{as:posterior_refer}, \ref{dfn:control}, and \ref{as:objective}.
	We already showcased in Lemma \ref{lem:posterior_refer2} that part (ii) of Assumption \ref{as:posterior_refer} is not too restrictive and naturally satisfied in various examples.
	Additionally, note that part (i) of Assumption  \ref{as:posterior_refer} (the $p$-integrability of $b$ and~$\sigma$) is related to Assumption \ref{as:objective} and is almost minimal: if $f$ or $g$ have faster than $p$-polynomial growth, then the optimization problem $V(0)$ we consider is not even well-posed in general.
	In a similar manner, the assumptions that $f$ and $g$ are twice differentiable are needed to ensure the approximation of $V(\varepsilon)$ up to an $O(\varepsilon^2)$ error term; if they are only once differentiable the best one can hope of is an $o(\varepsilon)$ error term.
	Finally, it is conceivable that Assumption~\ref{dfn:control} can be relaxed, and instead of requiring $K(\omega) = \sup_{t\in[0,T]} \sup_{v\in \mathcal{K}(w,t)} |v|$ to be  bounded uniformly in $\omega$, it seems  sufficient for  $K(\cdot)$ to be  integrable in a suitable sense. 
	However, we believe that the technical difficulties associated with this relaxation would introduce greater complexities than the added benefit, and thus we follow the many works studying related topics (see, e.g. \cite{bartl2023sensitivity,czichowsky2012convex,karatzas2007numeraire,cvitanic1992convex,LabHeu2007}) in which uniform boundedness of  $K(\cdot)$ is assumed.

	Finally, let us note that by the choice of the set of parameters  $\mathcal{B}^\varepsilon$, the robust optimization problem is not time-consistent. 
	One possible approach to establish time-consistency seems to be by considering the case $p=\infty$ (but with the essential supremum taken over both $\omega$ and $t$).
	It is likely that our arguments can be extended to this setting, also using methods from our paper  \cite{bartl2024numerical} (which focuses on numerical methods for the implementation of robust pricing via solving nonlinear Kolmogorov PDEs).	
\end{rem}


\section{Example: sensitivity of robust mean-variance hedging}\label{sec:exmp_mv_hedging} 
To illustrate our main results, let us consider the following robust mean-variance hedging problem (e.g., \cite{DR1991,czichowsky2012convex,Schweizer1992,ZhouLi2000}): for  every $\varepsilon\geq 0$, let
\begin{align}\label{eq:primal_quad}
	V(\varepsilon) = \inf_{H\in {\cal A}}\sup_{(b,\sigma)\in \mathcal{B}^\varepsilon}\mathbb{E} \bigg[\int_0^Tg^{\operatorname{MV}}(t,X_t^{H;b,\sigma},H_t)dt+ f^{\operatorname{MV}}(X_T^{H;b,\sigma}) \bigg],
\end{align}
where $g^{\operatorname{MV}}:\Omega\times[0,T]\times \mathbb{R}\times \mathbb{R}^d\to \mathbb{R}$ and $f^{\operatorname{MV}}:\Omega\times\mathbb{R}\to \mathbb{R}$ are given by 
\begin{align*}
	g^{\operatorname{MV}}(\omega,t,x,h):=A_t(\omega) x^2+x B_t^\top(\omega) h+ h^\top C_t(\omega) h, \qquad f^{\operatorname{MV}}(\omega,x):= (x-\xi(\omega))^2,
\end{align*}
for every $(\omega,t,x,h)\in\Omega\times[0,T]\times \mathbb{R}\times \mathbb{R}^d$,  where $\xi$ is a real-valued ${\cal F}_T$-measurable random variable and $(A_t,B_t,C_t)_{t\in[0,T]}$ is a triplet of $(\mathbb{R},\mathbb{R}^d,\mathbb{R}^{d\times d})$-valued $\mathbb{F}$-predictable processes satisfying
	\[
	\sup_{(\omega,t)\in\Omega\times[0,T]}\Big\{|\xi(\omega)|+|A_t(\omega)|+|B_t(\omega)|+\|C_t(\omega)\|_{\operatorname{F}}\Big\}<\infty,
	\]
	and where for every $(\omega,t,x,h)\in\Omega\times[0,T]\times \mathbb{R}\times \mathbb{R}^d$,
	\begin{align*}
			D_{x,h}^2g^{\operatorname{MV}}(\omega,t, x, h)=\begin{pmatrix}
			2A_t(\omega) &B_t^\top(\omega) \\
			B_t(\omega) &2C_t(\omega) 
		\end{pmatrix}
	\end{align*}
belongs to $\mathbb{S}_+^{d+1}$. 
Under this setting, $g^{\operatorname{MV}}$ and $f^{\operatorname{MV}}$ satisfy the conditions in Assumption~\ref{as:objective}.

\begin{as}\label{as:quad_risk} The following conditions hold:
	\begin{itemize}
		\item [(i)] $\mathbb{F}$ is the augmented filtration generated by the $d$-dimensional Brownian motion $W$.
		\item [(ii)] $b^o$ and $\sigma^o$ are uniformly bounded and $\sigma^o$ is also invertible. 
		\item [(iii)] The correspondence ${\cal K}:\Omega\times [0,T]\twoheadrightarrow \mathbb{R}^d$ (see Assumption~\ref{dfn:control}) is constant-valued, i.e., ${\cal K}(\omega,t)=\overline{{\cal K}}$ for every $(\omega,t)\in \Omega\times [0,T]$ with respect to some non-empty, closed, convex and bounded subset
		$\overline {\cal K}\subset \mathbb{R}^d$ containing $0$.
	\end{itemize}
\end{as}

Assumption \ref{as:quad_risk} enables us to make use  of the stochastic maximum principle (e.g., \cite[Theorem 1.5]{cadenillas1995stochastic}) to obtain necessary and sufficient optimality conditions for the problem~$V(0)$ given in \eqref{eq:primal_quad}.
Moreover, using the convex duality approach (see e.g., \cite[Theorems 7,\;9]{li2018constrained}, \cite[Theorems 2,\;3]{vega2023duality}) one can characterize the optimizer $H^*$  (for $V(0)$) via dual forward-backward SDEs revolving around the stochastic maximum principle of the corresponding dual problem of $V(0)$.

In what following, we state the results connecting the optimizer $H^*$ and the BSDE formulation given in Proposition \ref{pro:main0} and provide the corresponding sensitivity results. 
To that end, set ${\mathcal{G}}:=\mathbb{R}\times \mathscr{H}^2(\mathbb{R})\times \mathscr{H}^2(\mathbb{R}^d)$ and denote the dual problem of $V(0)$ given in \eqref{eq:primal_quad} by 
\begin{align}\label{eq:dual_quad}
	\Psi:=\inf_{(\gamma,\alpha,\beta)\in {\mathcal{G}}} \left(x_0\gamma  + \mathbb{E}\left[\int_0^T\widetilde g^{\operatorname{MV}}(t,\alpha_t,\beta_t)dt+\frac{1}{4}(\Gamma_T^{\gamma;\alpha,\beta})^2-\Gamma_T^{\gamma;\alpha,\beta} \xi \right]  \right),
\end{align}
where $\widetilde g^{\operatorname{MV}}:[0,T]\times \mathbb{R}\times \mathbb{R}^d\ni (\omega,t,\alpha,\beta)\to \widetilde g^{\operatorname{MV}}(\omega,t,\alpha,\beta)\in\mathbb{R}$ denotes the conjugate function of the map $(x,h)\to g^{\operatorname{MV}}(\omega,t,x,h)$ with constraint on $h\in \overline{\cal K}$ for each $(\omega,t)\in\Omega\times[0,T]$, i.e.,
\[
	\widetilde g^{\operatorname{MV}}(\omega,t,\alpha,\beta):= \sup_{x\in \mathbb{R},\,h\in \overline{\cal K}} \Big\{x\alpha+h^\top \beta-g^{\operatorname{MV}}(\omega,t,x,h)\Big\},
\] 
and where $\Gamma^{\gamma,\alpha,\beta}:=(\Gamma^{\gamma,\alpha,\beta}_t)_{t\in[0,T]}$ denotes a dual process satisfying
\begin{align}\label{eq:dual}
	\begin{aligned}
		\Gamma^{\gamma,\alpha,\beta}_t =& \gamma + \int_0^t \alpha_udu+\int_0^t (\beta_u)^\top  dW_u\\
		&+\int_0^t \frac{({\sigma}_{u,d}^{o})^\top}{|{\sigma}_{u,d}^{o}|^2}   \left(\beta_u-b^o_u\Gamma_u^{\gamma,\alpha,\beta} - \sum_{i=1}^d {\sigma}_{u,i}^{o}\beta_u^i\right)dW_u^d,
	\end{aligned}
\end{align}
with ${\sigma}_{\cdot,i}^{o}=({\sigma}_{t,i}^{o})_{t\in[0,T]}$, $i=1,\dots,d$, denoting the $i$-th `column' vector process of $\sigma^o$. 

Recall that  $H^*\in {\cal A}$ is the unique optimizer for  $V(0)$ (see Proposition \ref{pro:main0}\;(i)) and that the corresponding controlled process $X^{H^*;o}$ is given by $X^{H^*;o}_t =x_0 + (H^*\cdot S^o)_t$. 
Under Assumption~\ref{as:quad_risk}\;(i), the BSDE given in Proposition \ref{pro:main0}\;(ii) can be rewritten as 
\begin{align}\label{eq:BSDE_quad_primal}
	Y_t^* = 2 (X^{H^*;o}_T -\xi)+\int_t^T(2A_tX_t^{H^*;o}+B_t^\top H_t^*)dt-\int_t^T (Z_u^*)^\top dW_u,
\end{align}
with $(Y^*,Z^*)\in\mathscr{S}^{2}(\mathbb{R}) \times \mathscr{H}^{2}(\mathbb{R}^d)$.

Furthermore, let $(\gamma^*,\alpha^*, \beta^*)\in {\mathcal{G}}$  be the optimizer of $\Psi$ given in \eqref{eq:dual_quad} (see e.g., \cite[Proposition 5.4]{LabHeu2007}) and denote by $\Gamma^*:=\Gamma^{\gamma^*,\alpha^*,\beta^*} $ the corresponding optimal dual process given in \eqref{eq:dual}. 
Let $(P^*,Q^*)\in\mathscr{S}^{2}(\mathbb{R}) \times \mathscr{H}^{2}(\mathbb{R}^d) $ be the unique solution of the BSDE (see e.g., \cite[Theorem~6.2.1]{pham2009continuous}) 
\begin{align}\label{eq:BSDE_quad_dual}
	P_t^* = \frac{1}{2}\Gamma_T^* -\xi - \int_t^T Q^{*,d}_u \frac{({\sigma}_{u,d}^{o})^\top}{|{\sigma}_{u,d}^{o}|^2}   b_u^o  du  - \int_t^T (Q^*_u)^\top dW_u,
\end{align}
with $Q^{*,i}$, $i=1,\dots,d$, denoting the $i$-th component of $Q^*$.

With all this preparation, we can state the stochastic maximum principle for $V(0)$ and $\Psi$, and provide the sensitivity of the problem $V(\varepsilon)$ given in \eqref{eq:primal_quad}. 
\begin{cor}
	Suppose that Assumption \ref{as:quad_risk} holds. Let $(Y^*,Z^*)$ and $(P^*,Q^*)$ be the solution of the BSDEs given in \eqref{eq:BSDE_quad_primal} and \eqref{eq:BSDE_quad_dual}, respectively. Then the following hold:
	\begin{itemize}
		\item [(i)] 
		$H^*\in{\cal A}$ is optimal for $V(0)$ if and only if the controlled process $X^{H^*;o}$ and $(Y^*,Z^*)$ 
		satisfy that for every $H\in {\cal A}$, $\mathbb{P}\otimes dt$-a.e.,
		\begin{align*}
			(H_t^*-H_t)^\top \big(- b_t^o Y_t^* - \sigma_t^o Z_t^*-X_t^{H^*;o}B_t  -2C_tH_t^* \big)\geq 0.
		\end{align*}
		Furthermore, $(\gamma^*,\alpha^*, \beta^*)\in \mathcal{G}$ is optimal for $\Psi$ if and only if the dual process $\Gamma^*=\Gamma^{\gamma^*,\alpha^*, \beta^*}$ and  $(P^*,Q^*)$ satisfy that for $i=1,\dots,d-1$, $\mathbb{P}\otimes dt$-a.e.
		\begin{align*}
			Q_t^{*,i}- Q^{*,d}_t \frac{({\sigma}_{t,d}^{o})^\top}{|{\sigma}_{t,d}^{o}|^2}  {\sigma}_{t,i}^{o}=0,\qquad  - Q^{*,d}_t \frac{{\sigma}_{t,d}^{o}}{|{\sigma}_{t,d}^{o}|^2}    \in {\cal K},\quad \mbox{and} \quad P_0^*= -x_0.
		\end{align*}
		In particular, the optimizer $H^*$ is characterized by $\mathbb{P}\otimes dt$-a.e.
		\[
		H_t^* = -Q^{*,d}_t \frac{({\sigma}_{t,d}^{o})^\top}{|{\sigma}_{t,d}^{o}|^2}.
		\]
		\item[(ii)] As $\varepsilon\downarrow 0$,
		\[
		V(\varepsilon) = V(0)+ \varepsilon\cdot  \Bigg( \gamma \bigg\lVert Y^* Q^{*,d}  \frac{{\sigma}_{\cdot ,d}^{o}}{|{\sigma}_{\cdot ,d}^{o}|^2}  \bigg\rVert_{\mathbb{L}^q}   +  \eta \bigg\lVert Z^* Q^{*,d}  \frac{({\sigma}_{\cdot ,d}^{o})^\top}{|{\sigma}_{\cdot ,d}^{o}|^2}  \bigg\rVert_{\mathbb{H}^q}\Bigg)+O(\varepsilon^2).
		\]
	\end{itemize}
\end{cor}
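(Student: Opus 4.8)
The plan is to prove part~(i) by invoking the stochastic maximum principle for the two convex problems $V(0)$ and $\Psi$ and then to derive part~(ii) directly from Theorem~\ref{thm:main} together with part~(i).

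For part~(i), I first note that under Assumption~\ref{as:quad_risk} the problem $V(0)$ in \eqref{eq:primal_quad} is a convex stochastic control problem: the state equation \eqref{eq:control_process} is linear in $(X,H)$, the control set $\overline{\cal K}$ is closed, convex and bounded, $g^{\operatorname{MV}}$ is convex in $(x,h)$ and $f^{\operatorname{MV}}$ is strongly convex in $x$. Hence \cite[Theorem~1.5]{cadenillas1995stochastic} gives that $H^\ast\in{\cal A}$ is optimal \emph{if and only if} the adjoint BSDE and the Hamiltonian variational inequality hold. Writing out the Hamiltonian $\mathcal{H}(t,x,h,y,z)=h^\top(b^o_t y+\sigma^o_t z)+g^{\operatorname{MV}}(t,x,h)$ one gets $\nabla_h\mathcal{H}(t,x,h,y,z)=b^o_t y+\sigma^o_t z+xB_t+2C_t h$, and the adjoint equation has terminal value $\partial_x f^{\operatorname{MV}}(X_T^{H^\ast;o})=2(X_T^{H^\ast;o}-\xi)$; under Assumption~\ref{as:quad_risk}\,(i) the Brownian martingale representation forces the orthogonal part in the GKW decomposition of Proposition~\ref{pro:main0}\,(ii) to vanish, so this adjoint system is precisely \eqref{eq:BSDE_quad_primal} and $(Y^\ast,Z^\ast)$ here coincides with the process of Proposition~\ref{pro:main0}\,(ii). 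The first-order optimality condition $\langle\nabla_h\mathcal{H}(t,X_t^{H^\ast;o},H_t^\ast,Y_t^\ast,Z_t^\ast),H_t-H_t^\ast\rangle\geq 0$ for all $H\in{\cal A}$ rearranges into the displayed inequality in part~(i).

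Next I run the same machinery on $\Psi$. By the convex duality results \cite[Theorems~7,9]{li2018constrained},\cite[Theorems~2,3]{vega2023duality}, strong duality relates $V(0)$ and $\Psi$ and links their optimizers, and $\Psi$ is itself a convex control problem with scalar state $\Gamma^{\gamma;\alpha,\beta}$ governed by \eqref{eq:dual}, initial datum $\gamma\in\mathbb{R}$, unconstrained controls $(\alpha,\beta)\in\mathscr{H}^2(\mathbb{R})\times\mathscr{H}^2(\mathbb{R}^d)$, and convex cost. Applying \cite[Theorem~1.5]{cadenillas1995stochastic} to $\Psi$, its adjoint process $(P^\ast,Q^\ast)$ solves the BSDE with terminal value $\partial_{\Gamma_T}(\tfrac14\Gamma_T^2-\Gamma_T\xi)=\tfrac12\Gamma^\ast_T-\xi$ and driver read off from the $\Gamma$-dependence (the term $-b^o_t\Gamma_t$) in the $dW^d$-coefficient of \eqref{eq:dual}, i.e.\ precisely \eqref{eq:BSDE_quad_dual}. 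Stationarity in $\gamma$ yields $x_0+P_0^\ast=0$; stationarity in the components $\beta^i$, $i=1,\dots,d-1$, which enter the diffusion of $\Gamma$ both directly and through the $dW^d$-term, yields $Q_t^{\ast,i}-Q_t^{\ast,d}\tfrac{(\sigma^o_{t,d})^\top}{|\sigma^o_{t,d}|^2}\sigma^o_{t,i}=0$; and since $\nabla_\beta\widetilde g^{\operatorname{MV}}(\omega,t,\alpha,\beta)$ returns the $h\in\overline{\cal K}$ attaining the supremum defining $\widetilde g^{\operatorname{MV}}$, the remaining stationarity condition forces that maximizer to equal $-Q_t^{\ast,d}\tfrac{\sigma^o_{t,d}}{|\sigma^o_{t,d}|^2}$, so in particular $-Q_t^{\ast,d}\tfrac{\sigma^o_{t,d}}{|\sigma^o_{t,d}|^2}\in\overline{\cal K}={\cal K}$; finally the primal--dual relation identifies this maximizing $h$ with the primal optimizer, giving $H_t^\ast=-Q_t^{\ast,d}\tfrac{(\sigma^o_{t,d})^\top}{|\sigma^o_{t,d}|^2}$.

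For part~(ii) I check that Theorem~\ref{thm:main} applies: Assumption~\ref{as:posterior_refer} follows from Assumption~\ref{as:quad_risk}\,(ii) and Lemma~\ref{lem:posterior_refer2}\,(i), Assumption~\ref{dfn:control} holds since the constant correspondence $\overline{\cal K}$ is closed, convex, bounded and trivially predictable, and Assumption~\ref{as:objective} holds for $g^{\operatorname{MV}},f^{\operatorname{MV}}$ as already observed. Since $f^{\operatorname{MV}}$ does not depend on the terminal-position variable, in the notation of the general framework one may take $l\equiv 0$, so $\partial_y f\equiv 0$ and hence, by Lemma~\ref{lem:explicit.Y.Z}, $\mathcal{Y}^\ast\equiv 0$ and $\mathcal{Z}^\ast\equiv 0$; moreover $(Y^\ast,Z^\ast)$ of \eqref{eq:BSDE_re_x} solves \eqref{eq:BSDE_quad_primal} as noted above. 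Theorem~\ref{thm:main} then gives $V(\varepsilon)=V(0)+\varepsilon\big(\gamma\lVert Y^\ast H^\ast\rVert_{\mathbb{L}^q}+\eta\lVert Z^\ast (H^\ast)^\top\rVert_{\mathbb{H}^q}\big)+O(\varepsilon^2)$, and substituting $H_t^\ast=-Q_t^{\ast,d}\tfrac{(\sigma^o_{t,d})^\top}{|\sigma^o_{t,d}|^2}$ from part~(i) and using $|va^\top|=|v|\,|a|$ resp.\ $\lVert va^\top\rVert_{\operatorname{F}}=|v|\,|a|$ turns the two norms into exactly those in the statement. The step I expect to be the main obstacle is the dual-side analysis in part~(i): correctly identifying the adjoint system \eqref{eq:BSDE_quad_dual} of $\Psi$ whose state $\Gamma$ carries the unusual $W^d$-driven coefficient encoding the martingale constraint, carefully extracting the stationarity conditions in $\gamma,\alpha,\beta$, and transferring the structure of the dual optimizer back to $H^\ast$ via strong duality; the primal maximum principle and part~(ii) are comparatively routine once Theorem~\ref{thm:main} and part~(i) are available.
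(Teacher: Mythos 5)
Your proposal is correct and matches the route the paper itself indicates in the paragraph preceding the corollary: it invokes the stochastic maximum principle of \cite[Theorem~1.5]{cadenillas1995stochastic} for the primal problem $V(0)$, the convex-duality characterization of \cite[Theorems~7,\,9]{li2018constrained} and \cite[Theorems~2,\,3]{vega2023duality} for the dual problem $\Psi$, and specializes Theorem~\ref{thm:main} (with $l\equiv 0$, hence $\mathcal{Y}^\ast=\mathcal{Z}^\ast\equiv 0$, and the orthogonal parts vanishing by the Brownian filtration of Assumption~\ref{as:quad_risk}\,(i)) for part~(ii). The substitution $H^\ast_t=-Q^{\ast,d}_t(\sigma^o_{t,d})^\top/|\sigma^o_{t,d}|^2$ and the rank-one norm identities $|v a^\top|=\|v a^\top\|_{\mathrm F}=|v|\,|a|$ correctly produce the stated form of $V'(0)$.
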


\section{Proof of Proposition \ref{pro:main0} and Theorem \ref{thm:main}}
\label{sec:proof}

We start by highlighting  the main ideas used in the proof of Theorem \ref{thm:main}.
For simplicity we focus on the notationally lighter case when $g=0$ and $l=0$.

The first step involves a second-order Taylor expansion,  showing that for every $H\in\mathcal{A}$  and $(b,\sigma)\in\mathcal{B}^\varepsilon$,
\begin{align}\label{eq:highlight.proof}
	\mathbb{E}\Big[  f\big(X_T^{H;b,\sigma},0\big)  \Big] - \mathbb{E}\Big[  f\big(X_T^{H;o},0\big)  \Big] 
= \mathbb{E}\Big[ \partial_{x} f\big(X_T^{H;o},0\big) \big(X_T^{H;b,\sigma} - X_T^{H;o}\big) \Big] + O(\varepsilon^2).
\end{align}

Subsequently, a crucial observation is  that the expectation appearing in the right-hand side of \eqref{eq:highlight.proof} can be expressed as a linear form involving $b-b^o$ and $\sigma-\sigma^o$:
 If $(Y^H,Z^H,L^H)$ solves the following BSDE 
	\[
			\quad\quad Y_t^H = \partial_{x}f\big(X_T^{H;o},0\big)  - \int_t^T (Z_u^H)^\top dW_u-(L_T^H-L_t^H),
			\]
			with $L_0^H=0$, then
\begin{align}
\label{eq:linear}
		\mathbb{E}\Big[ \partial_{x} f\big(X_T^{H;o},0\big) \big(X_T^{H;b,\sigma} - X_T^{H;o}\big) \Big]
		&= \langle Y^H H, b-b^o \rangle_{\mathbb{P}\otimes dt}+ \langle Z^HH^\top, \sigma-\sigma^o \rangle_{ \mathbb{P}\otimes dt,\operatorname{F}}. 
\end{align}
Next, setting $q:=\frac{p}{p-1}$ to be the conjugate H\"older conjugate of $p$ and relying on the duality within the pairings $\langle \mathbb{L}^p(\mathbb{R}^d),\mathbb{L}^q(\mathbb{R}^d), \langle\cdot,\cdot \rangle_{\mathbb{P}\otimes dt}\rangle$ and $\langle \mathbb{H}^p(\mathbb{R}^{d\times d}),\mathbb{H}^q(\mathbb{R}^{d\times d}), \langle\cdot,\cdot \rangle_{\mathbb{P}\otimes dt,\operatorname{F}}\rangle$, it follows that
\[\sup_{(b,\sigma)\in\mathcal{B}^\varepsilon}\Big(
\langle Y^H H, b-b^o \rangle_{\mathbb{P}\otimes dt}+ \langle Z^HH^\top, \sigma-\sigma^o \rangle_{ \mathbb{P}\otimes dt,\operatorname{F}} \Big)
= \varepsilon \left( \gamma \lVert Y^HH  \rVert_{\mathbb{L}^q}   + \eta \lVert Z^H H^\top \rVert_{\mathbb{H}^q} \right).\]

The final ingredient in the proof lies in showing that if $(H^\varepsilon)_{\varepsilon>0}$ are (almost) optimizers for the robust optimization problems $V(\varepsilon)$, then the $H^\varepsilon$'s converge to the unique optimizer $H^\ast$ of $V(0)$ as $\varepsilon\downarrow 0$.

\vspace{0.5em}
In the subsequent sections, we will establish the technical details required to rigorously prove these arguments.  Since our results are concerned with the behavior as $\varepsilon\downarrow 0$, we can and do assume without loss of generality that $\gamma,\eta\leq 1$. 

\subsection{Preliminary estimates and GKW decomposition}\label{sec:prelimi_GKW}
Let us provide some observations that play an instrumental role in the proof of Proposition \ref{pro:main0} and Theorem \ref{thm:main}. 
In what follows, we often make use of the following elementary inequality: for every $\beta\geq 0$ and $m\in\mathbb{N}$, 
\begin{align}\label{eq:power_triangle}
	\Big\lvert \sum_{i=1}^m a_i \Big\rvert^\beta \leq m^\beta\cdot \sum_{i=1}^m\lvert a_i \rvert^\beta,\qquad\mbox{for every}\;\;\{a_i\}_{i=1}^m\subset \mathbb{R}.
\end{align}
We call \eqref{eq:power_triangle}  the `power triangle inequality'.

Let us begin with a priori estimates on $X^{H;b,\sigma}$ (given in \eqref{eq:control_process}) and $S^{b,\sigma}$ (given in~\eqref{eq:posterior_semi_ito}).

\begin{lem}\label{lem:p_integrability}
	Suppose that Assumptions \ref{as:posterior_refer} and \ref{dfn:control} are satisfied.
	Then the following hold:
	\begin{itemize}
		\item [(i)] For every $\varepsilon\geq0$,
		\begin{align*}
			\quad \quad \sup_{(b,\sigma)\in \mathcal{B}^\varepsilon}\sup_{H\in{\cal A}} \big\lVert  X^{H;b,\sigma}- X^{H;o} \big\rVert_{\mathscr{S}^p}^p \leq C_{1} \varepsilon^p,\quad \sup_{(b,\sigma)\in \mathcal{B}^\varepsilon} \big\lVert  S^{b,\sigma}- S^{o} \big\rVert_{\mathscr{S}^p}^p \leq C_{2} \varepsilon^p,
		\end{align*}
		with $\lVert \, \cdot \, \rVert_{\mathscr{S}^{p }}^p = \mathbb{E}[\sup_{t\in [0,T]}|\cdot |^p]$, where $C_{1} :=  2^pK^p (T^{p-1}+ C_{\operatorname{BDG},p})$, $C_{2} := 2^p(T^{p-1}+ C_{\operatorname{BDG},p})$, and $C_{\operatorname{BDG},p}\geq1$ is the BDG constant (in Section \ref{sec:preliminary}) with the exponent $p>3$.
		\item [(ii)] The constants ${C}_{3}$, ${C}_{4}$ defined by
		\begin{align*}
			\quad \quad {C}_{3}:=\sup_{(b,\sigma)\in \mathcal{B}^1}\sup_{H\in{\cal A}} \big\lVert X^{H;b,\sigma} \big\rVert_{\mathscr{S}^p}^p,\quad {C}_{4}:=  \sup_{(b,\sigma)\in \mathcal{B}^1} \big\|l(S^{b,\sigma}) \big\|_{\mathscr{S}^p}^p
		\end{align*}
		satisfy ${C}_{3}, {C}_{4}<\infty$.
	\end{itemize}
\end{lem}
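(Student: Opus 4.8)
\textbf{Proof plan for Lemma \ref{lem:p_integrability}.}
The plan is to obtain both parts by the same routine: write the relevant process as a sum of a Lebesgue (drift) integral and an It\^o (stochastic) integral, take suprema over time, raise to the $p$-th power, and estimate the two pieces separately using the power triangle inequality \eqref{eq:power_triangle} for the split, H\"older's inequality for the drift term, and the Burkholder-Davis-Gundy inequality for the stochastic term. All constants will come out exactly as claimed because the factor $2^p$ is the power-triangle factor for a two-term sum, $T^{p-1}$ is the H\"older constant for the $dt$-integral on $[0,T]$, and $C_{\operatorname{BDG},p}$ is the BDG constant; the factor $K^p$ appears only for $X$ because $|H_u| \le K$ uniformly by Assumption \ref{dfn:control}(iii).

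For part (i), first consider $X^{H;b,\sigma}_t - X^{H;o}_t = \int_0^t H_u^\top(b_u - b^o_u)\,du + \int_0^t H_u^\top(\sigma_u - \sigma^o_u)\,dW_u$. Applying \eqref{eq:power_triangle} with $m=2$, $\beta = p$ and then taking the supremum over $t$ inside each term, one gets
\[
\big\lVert X^{H;b,\sigma} - X^{H;o}\big\rVert_{\mathscr{S}^p}^p
\le 2^p\,\mathbb{E}\Big[\Big(\int_0^T |H_u|\,|b_u - b^o_u|\,du\Big)^p\Big]
+ 2^p\,\mathbb{E}\Big[\sup_{t\in[0,T]}\Big|\int_0^t H_u^\top(\sigma_u-\sigma^o_u)\,dW_u\Big|^p\Big].
\]
Bounding $|H_u|\le K$, H\"older's inequality in the time variable gives $\big(\int_0^T|b_u-b^o_u|\,du\big)^p \le T^{p-1}\int_0^T|b_u-b^o_u|^p\,du$, so the first term is at most $2^p K^p T^{p-1}\|b-b^o\|_{\mathbb{L}^p}^p \le 2^p K^p T^{p-1}(\gamma\varepsilon)^p$. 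For the second term, BDG gives the bound $C_{\operatorname{BDG},p}\,\mathbb{E}\big[(\int_0^T |H_u^\top(\sigma_u-\sigma^o_u)|^2\,du)^{p/2}\big] \le C_{\operatorname{BDG},p}\,K^p\,\mathbb{E}\big[(\int_0^T \|\sigma_u-\sigma^o_u\|_{\operatorname{F}}^2\,du)^{p/2}\big] = C_{\operatorname{BDG},p}\,K^p\,\|\sigma-\sigma^o\|_{\mathbb{H}^p}^p \le C_{\operatorname{BDG},p}\,K^p(\eta\varepsilon)^p$, where I used $|H_u^\top(\sigma_u-\sigma^o_u)| \le |H_u|\,\|\sigma_u-\sigma^o_u\|_{\operatorname{F}} \le K\,\|\sigma_u-\sigma^o_u\|_{\operatorname{F}}$. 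Since $\gamma,\eta\le 1$ (assumed WLOG), adding the two yields $C_1\varepsilon^p$ with $C_1 = 2^p K^p(T^{p-1}+C_{\operatorname{BDG},p})$, uniformly in $H\in\mathcal{A}$ and $(b,\sigma)\in\mathcal{B}^\varepsilon$. The estimate for $S^{b,\sigma}-S^o = \int_0^\cdot(b_u-b^o_u)\,du + \int_0^\cdot(\sigma_u-\sigma^o_u)\,dW_u$ is identical but without the factor $|H_u|\le K$, giving $C_2 = 2^p(T^{p-1}+C_{\operatorname{BDG},p})$.

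For part (ii), $C_3<\infty$ follows by the same split applied to $X^{H;b,\sigma}_t = x_0 + \int_0^t H_u^\top b_u\,du + \int_0^t H_u^\top\sigma_u\,dW_u$: the power triangle inequality with $m=3$, then $|H_u|\le K$, H\"older, and BDG, bound $\|X^{H;b,\sigma}\|_{\mathscr{S}^p}^p$ by $3^p\big(|x_0|^p + K^p T^{p-1}\|b\|_{\mathbb{L}^p}^p + C_{\operatorname{BDG},p}K^p\|\sigma\|_{\mathbb{H}^p}^p\big)$; over $(b,\sigma)\in\mathcal{B}^1$ one has $\|b\|_{\mathbb{L}^p}\le\|b^o\|_{\mathbb{L}^p}+\gamma$ and $\|\sigma\|_{\mathbb{H}^p}\le\|\sigma^o\|_{\mathbb{H}^p}+\eta$, both finite by Assumption \ref{as:posterior_refer}(i), so the supremum is finite. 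For $C_4$, the linear growth of $l$ from Remark \ref{rem:objective}(ii) gives $|l(S^{b,\sigma}_t)| \le C_l(1+|S^{b,\sigma}_t|)$, hence $\|l(S^{b,\sigma})\|_{\mathscr{S}^p}^p \le 2^p C_l^p(1 + \|S^{b,\sigma}\|_{\mathscr{S}^p}^p)$, and $\|S^{b,\sigma}\|_{\mathscr{S}^p}$ is bounded uniformly over $\mathcal{B}^1$ by the same argument (or by $\|S^{b,\sigma}\|_{\mathscr{S}^p}\le\|S^o\|_{\mathscr{S}^p}+\|S^{b,\sigma}-S^o\|_{\mathscr{S}^p}\le\|S^o\|_{\mathscr{S}^p}+C_2^{1/p}$ using part (i) and $\|S^o\|_{\mathscr{S}^p}<\infty$ from Remark \ref{rem:p_char}). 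There is no real obstacle here — the only points requiring a little care are keeping the constants exactly as stated (being precise about the power-triangle factors $2^p$ versus $3^p$ and where $K$ does and does not enter) and invoking the uniform bound $|H_u|\le K$ at the right moments.
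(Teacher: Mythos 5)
Your proposal is correct and follows essentially the same route as the paper: decompose into drift and martingale parts, apply the two-term power-triangle bound (giving $2^p$), estimate the drift integral by H\"older/Jensen in $t$ (giving $T^{p-1}$) with $|H|\le K$, estimate the stochastic integral by BDG (giving $C_{\operatorname{BDG},p}$) with $|H_u^\top(\sigma_u-\sigma^o_u)|\le K\|\sigma_u-\sigma^o_u\|_{\operatorname{F}}$, and for part (ii) combine with a three-term split (factor $3^p$), the integrability of $(b^o,\sigma^o)$, and the linear growth of $l$. The only cosmetic differences are that you bound $\|X^{H;b,\sigma}\|_{\mathscr{S}^p}$ directly rather than by the triangle inequality against $\|X^{H;o}\|_{\mathscr{S}^p}$ plus part (i), and you attribute the $T^{p-1}$ step to H\"older rather than Jensen; both are interchangeable here.
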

\begin{proof} 
We start by proving (i). 
An application of the power triangle inequality (see \eqref{eq:power_triangle}) and the BDG inequality shows that for every $\varepsilon\geq 0$, $(b,\sigma)\in \mathcal{B}^\varepsilon$ and $H\in{\cal A}$, 
\begin{align*}
	\begin{aligned}
&\big\lVert X^{H;b,\sigma}-X^{H;o}\big\rVert^p_{\mathscr{S}^p}=\mathbb{E}\bigg[\sup_{t\in[0,T]}\big|X_t^{H;b,\sigma}-X_t^{H;o}\big|^p\bigg]\\
&\quad \leq 2^p\Bigg(\mathbb{E}\Bigg[\sup_{t\in[0,T]}\Big|\int_0^t H_s^\top (b_s-b_s^o) ds\Big|^p \Bigg]+C_{\operatorname{BDG},p}\mathbb{E}\Bigg[\Big( \int_0^T \big\lvert H_t^\top  (\sigma_t-\sigma_t^o )\big\rvert^2 dt \Big)^{\frac{p}{2}} \Bigg]\Bigg). 
	\end{aligned}
\end{align*}
Moreover, since the function $x\rightarrow |x|^p$ with $p>3$ is convex and $\lvert H_t \rvert \leq K$ $\mathbb{P}\otimes dt$-a.e.\ for every $H\in {\cal A}$ (see Assumption \ref{dfn:control}), Jensen's inequality ensures that 
\[
\mathbb{E}\Bigg[\sup_{t\in[0,T]}\Big|\int_0^t H_s^\top (b_s-b_s^o) ds\Big|^p \Bigg]\leq T^{p-1} \mathbb{E}\Bigg[\int_0^T |H_s^\top (b_s-b_s^o) |^pds \Bigg]\leq T^{p-1} K^p \|b-b^o \|_{\mathbb{L}^p}^p. 
\]
Finally,  since $\lvert (\sigma_t-\sigma_t^o )^\top H_t  \lvert \leq   K \lVert \sigma_t-\sigma_t^o \rVert_{\operatorname{F}} $ $\mathbb{P}\otimes dt$-a.e.\ for every $H\in {\cal A}$, we conclude that
\begin{align}\label{eq:resd_1}
	\begin{aligned}
		&\sup_{(b,\sigma)\in \mathcal{B}^\varepsilon}\sup_{H\in{\cal A}}\big\lVert X^{H;b,\sigma}-X^{H;o}\big\rVert^p_{\mathscr{S}^p} \\
		&\quad \leq 2^pK^p  \sup_{(b,\sigma)\in \mathcal{B}^\varepsilon}  \left(T^{p-1}\lVert b- b^{o} \rVert_{\mathbb{L}^p}^p + C_{\operatorname{BDG},p} \lVert \sigma- \sigma^{o} \rVert_{\mathbb{H}^p}^{p}\right)\leq 
		C_1 \varepsilon^p.
	\end{aligned}
\end{align}

In a similar manner, it follows that
\[
\sup_{(b,\sigma)\in \mathcal{B}^\varepsilon} \lVert  S^{b,\sigma}- S^{o} \rVert_{\mathscr{S}^p}^p
\leq   2^p \sup_{(b,\sigma)\in \mathcal{B}^\varepsilon}\Big(T^{p-1}\lVert b-b^{o} \rVert_{\mathbb{L}^p}^p +C_{\operatorname{BDG},p}  \lVert \sigma-\sigma^o \rVert_{\mathbb{H}^p}^p\Big)
\leq  C_2\varepsilon^p.
\]

\vspace{0.5em}
\noindent Now let us prove (ii). Using Assumption \ref{as:posterior_refer}\;(i), the same arguments as presented for the proof of (i) can be used to show that 
\begin{align*}\quad\quad
		\sup_{H\in{\cal A}}\big\lVert X^{H;o} \big\rVert^p_{\mathscr{S}^p}
	\leq  3^p\left( |x_0|^p +K^p  \left(T^{p-1} \|b^o\|_{\mathbb{L}^p}^p +C_{\operatorname{BDG},p}\lVert \sigma^o \rVert_{\mathbb{H}^p}^{p}\right)  \right)
	<\infty.
\end{align*}
Hence, the claim that $C_{3} <\infty$ follows from the triangle inequality and \eqref{eq:resd_1}. 

Similarly, since $\sup_{(b,\sigma)\in\mathcal{B}^1}\lVert S^{b,\sigma}\rVert^p_{\mathscr{S}^p}<\infty$ (see Remark \ref{rem:p_char}) and $\lvert l({s})\rvert \leq C_l(1+\lvert {s} \rvert )$ for every~${s}\in \mathbb{R}^d$ (see Remark \ref{rem:objective}\;(ii)), the triangle inequality and the estimate given in (i) ensure that $ C_{4} <\infty$.  
\end{proof}

The following a priori estimate is based on the Galtchouk-Kunita-Watanabe decomposition (see, e.g., \cite{KunitaWatanabe67} or \cite[Theorem 4.27, p.126]{Jacod1979}).

\begin{lem}\label{lem:BSDE}
	For every $X\in L^2({\cal F}_T;\mathbb{R})$ and $R=(R_t)_{t\in[0,T]}\in \mathbb{L}^2(\mathbb{R})$, the BSDE given by
	\begin{align}\label{eq:BSDE_benchmark}
		Y_t = X +\int_t^TR_s ds - \int_t^T(Z_u)^\top dW_u -(L_T-L_t),\quad t\in [0,T],\quad \mbox{with $L_0=0$,}
	\end{align}
	has a unique solution $(Y,Z,L)\in\mathscr{S}^{2}(\mathbb{R}) \times \mathscr{H}^{2}(\mathbb{R}^d) \times \mathscr{M}^2(\mathbb{R})$. 
	In particular, $L$ is strongly orthogonal to $(\int_0^t(Z_u)^\top dW_u)_{t\in[0,T]}\in \mathscr{M}^2(\mathbb{R})$. 		
	Moreover, if we set $C_{\operatorname{ap}}:=\max\{2^8,(2^8+2^2)T\}$, then the solution satisfies the a priori estimate 
	\begin{align}\label{eq:BSDE_priori}
		\lVert Y \rVert_{\mathscr{S}^2}^2 + \lVert Z \rVert_{\mathscr{H}^2}^2 + \lVert L \rVert_{\mathscr{M}^2}^2 \leq {C}_{\operatorname{ap}} \big( \lVert X \rVert_{L^2}^2+ \|R \|_{\mathbb{L}^2}^2 \big).
	\end{align}
\end{lem}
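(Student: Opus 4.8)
The plan is to prove Lemma~\ref{lem:BSDE} as a direct application of the Galtchouk--Kunita--Watanabe (GKW) decomposition, followed by standard It\^o/BDG estimates. First I would introduce the $\mathscr{M}^2$-martingale $M_t := \mathbb{E}\big[X + \int_0^T R_s\,ds \,\big|\, {\cal F}_t\big]$, which is square-integrable since $X\in L^2({\cal F}_T;\mathbb{R})$ and $R\in \mathbb{L}^2(\mathbb{R})\subseteq \mathbb{L}^1(\mathbb{R})$ (so $\int_0^T R_s\,ds\in L^2({\cal F}_T;\mathbb{R})$ by Jensen/Cauchy--Schwarz in time). Applying the GKW decomposition of $M$ with respect to the Brownian motion $W$ yields $M_t = M_0 + \int_0^t (Z_u)^\top dW_u + L_t$ with $Z\in\mathscr{H}^2(\mathbb{R}^d)$, $L\in\mathscr{M}^2(\mathbb{R})$, $L_0=0$, and $L$ strongly orthogonal to $\int_0^\cdot (Z_u)^\top dW_u$. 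Then I would \emph{define} $Y_t := M_t - \int_0^t R_s\,ds$ and check by direct substitution that $(Y,Z,L)$ solves \eqref{eq:BSDE_benchmark}: indeed $Y_T = X$, and $Y_t - Y_T = M_t - M_T + \int_t^T R_s\,ds = -\int_t^T (Z_u)^\top dW_u - (L_T - L_t) + \int_t^T R_s\,ds$, which rearranges to the claimed identity. Uniqueness follows from the usual argument: if $(Y,Z,L)$ and $(Y',Z',L')$ are two solutions, then $Y_t - Y_t' = \int_t^T (Z_u - Z_u')^\top dW_u + (L_T - L_t) - (L_T' - L_t')$ is a martingale difference with terminal value $0$, hence $Y\equiv Y'$, and then matching the martingale parts together with the uniqueness of the GKW/Kunita--Watanabe decomposition forces $Z\equiv Z'$ (up to $\mathbb{P}\otimes dt$-null sets) and $L\equiv L'$ (up to indistinguishability).

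For the a priori estimate \eqref{eq:BSDE_priori} I would work from the representation $Y_t = \mathbb{E}\big[X + \int_t^T R_s\,ds \,\big|\, {\cal F}_t\big]$. For $\lVert Y\rVert_{\mathscr{S}^2}^2$: by the triangle inequality $|Y_t| \le \mathbb{E}[|X|\mid {\cal F}_t] + \mathbb{E}\big[\int_0^T |R_s|\,ds \mid {\cal F}_t\big]$, so the process $(|Y_t|)$ is dominated by a sum of two nonnegative martingales; applying Doob's $L^2$-maximal inequality to each, together with $(a+b)^2 \le 2a^2 + 2b^2$ and Jensen in time ($(\int_0^T |R_s|\,ds)^2 \le T\int_0^T |R_s|^2\,ds$), gives $\lVert Y\rVert_{\mathscr{S}^2}^2 \le 8\big(\lVert X\rVert_{L^2}^2 + T\lVert R\rVert_{\mathbb{L}^2}^2\big)$, say. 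For $\lVert Z\rVert_{\mathscr{H}^2}^2 + \lVert L\rVert_{\mathscr{M}^2}^2$: apply It\^o's formula to $|Y_t|^2$ (or equivalently use orthogonality of the stochastic integral and $L$ in $\mathscr{M}^2$) between $0$ and $T$ to get $\mathbb{E}\big[\int_0^T |Z_u|^2\,du\big] + \mathbb{E}[[L,L]_T] = \mathbb{E}[|X|^2] - |Y_0|^2 + 2\mathbb{E}\big[\int_0^T Y_s R_s\,ds\big]$, then bound the cross term by $2\mathbb{E}\big[\int_0^T |Y_s||R_s|\,ds\big] \le 2\lVert Y\rVert_{\mathscr{S}^2}\cdot \mathbb{E}\big[\int_0^T |R_s|\,ds\big]$ and use $2ab \le a^2 + b^2$ and the $\mathscr{S}^2$-bound just obtained. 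Collecting the numerical constants — keeping $8 = 2^3$ from Doob, factors of $2$ from the elementary inequalities, and the $T$ from Jensen — and being slightly generous, one arrives at a bound of the form $C_{\operatorname{ap}}(\lVert X\rVert_{L^2}^2 + \lVert R\rVert_{\mathbb{L}^2}^2)$ with $C_{\operatorname{ap}} = \max\{2^8, (2^8 + 2^2)T\}$ as stated; matching the precise constant just requires tracking the arithmetic carefully and is not delicate.

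The only genuinely structural input is the GKW decomposition, which I would cite from \cite{KunitaWatanabe67} or \cite[Theorem 4.27, p.126]{Jacod1979} exactly as the statement suggests; everything else is It\^o calculus, Doob's inequality, and Jensen. The main (minor) obstacle is bookkeeping: ensuring the integrabilities line up so that $\int_0^\cdot R_s\,ds$ really defines an $L^2$ random variable and so that the stochastic integral $\int_0^\cdot (Z_u)^\top dW_u$ is a true $\mathscr{M}^2$-martingale (not merely local) — both of which are immediate from $X\in L^2$, $R\in \mathbb{L}^2$, and the $\mathscr{H}^2$-membership of $Z$ coming out of GKW — and then chasing the explicit constant $C_{\operatorname{ap}}$ through the chain of elementary inequalities. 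I expect no serious difficulty; the lemma is a clean packaging of standard facts that will be reused repeatedly in the sensitivity analysis.
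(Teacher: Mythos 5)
Your proof is essentially the paper's: apply the GKW decomposition to the martingale $\widetilde Y_t := \mathbb{E}\big[X+\int_0^T R_s\,ds\mid{\cal F}_t\big]$, set $Y_t := \widetilde Y_t - \int_0^t R_s\,ds$, verify the BSDE by substitution, and obtain the $\mathscr{S}^2$-bound via Doob's inequality and Jensen. The only stylistic divergence is in the $\lVert Z\rVert_{\mathscr{H}^2}^2 + \lVert L\rVert_{\mathscr{M}^2}^2$ bound: you propose It\^o's formula for $|Y_t|^2$, which since $Y$ is only c\`adl\`ag through $L$ needs the jump-aware It\^o formula and then the absorption of the cross term $2\mathbb{E}\big[\int_0^T Y_sR_s\,ds\big]$, whereas the paper applies the It\^o isometry directly to $\widetilde Y_T - \widetilde Y_0 = \int_0^T Z_u^\top dW_u + L_T$ (using strong orthogonality of $L$ and the stochastic integral), so the drift never appears in the quadratic-variation computation --- a slightly shorter route to the same constant.
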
 
\begin{proof}
	Set $\widetilde Y_t:= \mathbb{E}[X+\int_0^TR_s ds\,|\,{\cal F}_t]$ for $t\in[0,T]$. An application of Doob's inequality together with the power triangle inequality (see \eqref{eq:power_triangle}) and Jensen's inequality shows~that
	\begin{align}\label{eq:priori1}
		\lVert \widetilde Y \rVert_{\mathscr{S}^2}^2 = \mathbb{E}\Bigg[\sup_{t\in[0,T]}  |\widetilde Y_t|^2\Bigg] 
		\leq 2^2 \mathbb{E}\Bigg[\bigg| X+\int_0^TR_s ds \bigg|^2\Bigg]\leq 2^4 \big( \|X \|_{L^2}^2 + T\cdot  \| R\|_{\mathbb{L}^2}^2\big)<\infty,
	\end{align}
	which ensures that $\widetilde Y=(\widetilde Y_t)_{t\in[0,T]}\in \mathscr{S}^2(\mathbb{R})$. Moreover, since $W^1,\dots,W^d\in\mathscr{M}^2(\mathbb{R})$, we can apply the Galtchouk-Kunita-Watanabe decomposition in \cite[Theorem 4.27, p.126]{Jacod1979} to represent the c\`adl\`ag process $\widetilde Y$ in terms of the orthogonal decomposition w.r.t.~$W=(W^1,\dots,W^d)^\top$, i.e.,
	\begin{align}\label{eq:cond_repre}
	\widetilde Y_t=\mathbb{E}\bigg[X +\int_0^TR_s ds \, \bigg|\, {\cal F}_t\bigg] =\widetilde Y_0+ \int_0^t (Z_s)^\top dW_s +L_t,\quad t\in[0,T],
	\end{align}
	where $Z=(Z_t)_{t\in[0,T]}\in \mathscr{H}^2(\mathbb{R}^d)$ and $L=(L_t)_{t\in[0,T]}\in \mathscr{M}^2(\mathbb{R})$ satisfies $L_0=0$ and is strongly orthogonal to $W^i$ for every $i=1,\dots,d$. This shows the existence of the solution $(\widetilde Y,Z,L)\in\mathscr{S}^{2}(\mathbb{R}) \times \mathscr{H}^{2}(\mathbb{R}^d) \times \mathscr{M}^2(\mathbb{R})$ satisfying 
	\begin{align*}
		\widetilde Y_t = X +\int_0^TR_s ds - \int_t^T(Z_u)^\top dW_u -(L_T-L_t),\quad t\in [0,T],\quad \mbox{with $L_0=0$}.
	\end{align*}
	
	Then, we claim that $(Y_t)_{t\in[0,T]}$, defined by $Y_t:=\widetilde{Y}_t-\int_0^tR_sds$ for $t\in[0,T]$, is in $\mathscr{S}^2(\mathbb{R})$. 
	Indeed, by the power triangle inequality and Jensen's inequality, 
	\begin{align}\label{eq:Y_Ytild}
			\|Y\|_{\mathscr{S}^2}^2\leq 2^2\Bigg(\|\widetilde Y\|_{\mathscr{S}^2}^2+\mathbb{E}\Bigg[\sup_{t\in[0,T]} \Big|\int_0^tR_sds \Big|^2 \Bigg] \Bigg)\leq 2^2\Big(\|\widetilde Y\|_{\mathscr{S}^2}^2+ T\cdot \|R\|_{\mathbb{L}^2}^2 \Big)<\infty,
	\end{align}
	where the last inequality holds because $\widetilde Y\in \mathscr{S}^2(\mathbb{R})$ (see \eqref{eq:priori1}) and $R\in \mathbb{L}^2(\mathbb{R})$.
	
	Therefore, we have a solution $(Y,Z,L)\in \mathscr{S}^2(\mathbb{R})\times \mathscr{H}^{2}(\mathbb{R}^d)\times \mathscr{M}^2(\mathbb{R})$ of \eqref{eq:BSDE_benchmark}.

	Furthermore, by the strong orthogonality between $L$ and $W^i$, $i=1,\dots,d$, and since $Z\in \mathscr{H}^2(\mathbb{R}^d)$ is $\mathbb{F}$-predictable,  an application of \cite[Lemma 2 \& Theorem 35, p.149]{Protter2005} shows that the two square integrable martingales $L$ and $(\int_0^t Z_s^\top dW_s)_{t\in[0,T]}$ are strongly orthogonal, i.e.,  $(L_t\int_0^t Z_u^\top dW_u)_{t\in[0,T]}$ is a uniformly integrable $(\mathbb{F},\mathbb{P})$-martingale. 
	Therefore, it follows from the It\^o-isometry (w.r.t. $|\widetilde Y_T-\widetilde Y_0|^2$; see \eqref{eq:cond_repre}) that
	\begin{align}\label{eq:priori2}
		\begin{aligned}
			\lVert Z \rVert_{\mathscr{H}^2}^2 + \lVert L \rVert_{\mathscr{M}^2}^2&=\mathbb{E}\Bigg[\int_0^T \lvert Z_s\rvert^2ds + [L,L]_T\Bigg]= \mathbb{E}\Big[|\widetilde Y_T-\widetilde Y_0|^2 \Big] \\
			&\leq 2^2 \mathbb{E}\Big[\max\{|\widetilde Y_T|^2,|\widetilde Y_0|^2\} \Big]\leq 
			   2^2   \lVert \widetilde Y \rVert_{\mathscr{S}^2}^2.
		\end{aligned}
	\end{align}
	Hence, the a priori estimate \eqref{eq:BSDE_priori} follows from combining \eqref{eq:priori1}, \eqref{eq:Y_Ytild} and \eqref{eq:priori2}. The uniqueness immediately follows from \eqref{eq:BSDE_priori}. Indeed, if there are two solutions to \eqref{eq:BSDE_benchmark}, then their difference solves \eqref{eq:BSDE_benchmark} with $X=0$ and $R=0$. This completes the proof. 
\end{proof}

\begin{pro}\label{pro:BSDE}
	Suppose that Assumptions \ref{as:posterior_refer}, \ref{dfn:control}, and \ref{as:objective} are satisfied. For every $H\in{\cal A}$, set
	\begin{align}\label{eq:terminal}
		\quad
		\begin{aligned}
		A^H&:=\partial_{x}f\big(X_T^{H;o},l(S_T^o)\big),\quad (R_t^H)_{t\in[0,T]}:=( \partial_xg(t,X_t^{H;o},H_t))_{t\in[0,T]},\\
		B^H&:= \partial_{y}f\big(X_T^{H;o},l(S_T^o)\big)\nabla l(S_T^o).
		\end{aligned}
	\end{align}
	Then the following hold. 
	\begin{itemize}
		\item [$(i)$] $A^H\in L^2({\cal F}_T;\mathbb{R})$, $R^H\in \mathbb{L}^2(\mathbb{R})$ and $B^H\in L^2({\cal F}_T;\mathbb{R}^d)$. 
		\item [$(ii)$] There exists a unique solution $(Y^H,Z^H,L^H)\in \mathscr{S}^2(\mathbb{R})\times \mathscr{H}^2(\mathbb{R}^d)\times \mathscr{M}^2(\mathbb{R})$ of 
		\begin{align}\label{eq:BSDE_univ1}
			\quad\quad Y_t^{H} = A^H+\int_t^T R^H_sds- \int_t^T (Z_s^H)^\top dW_s-(L_T^H-L_t^H),\quad t\in [0,T], 
		\end{align}
		with $L^H_0=0$, where $L^H$ and $(\int_0^t(Z_s^H)^\top dW_s)_{t\in[0,T]}\in \mathscr{M}^2(\mathbb{R})$ are strongly orthogonal.
		\item [$(iii)$]  There exists a unique solution  $({\cal Y}^H, {\cal Z}^H, {\cal L}^H)\in (\mathscr{S}^{2}(\mathbb{R}))^d \times (\mathscr{H}^{2}(\mathbb{R}^d))^d \times (\mathscr{M}^2(\mathbb{R}))^d$ of 
		\begin{align}\label{eq:BSDE_univ3}
			\quad {\cal Y}^H_t &= B^H - \int_t^T {{\cal Z}}^H_u dW_u-({{\cal L}}^H_T-{{\cal L}}^H_t)
		,\quad t\in [0,T],
		\end{align}
	with ${{\cal L}}^H_0=0$, where ${{\cal L}}^{H}$ and $(\int_0^t \mathcal{Z}_s^{H} dW_s)_{t\in[0,T]}$ are strongly orthogonal.
	\end{itemize}
\end{pro}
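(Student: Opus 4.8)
The plan is to reduce both existence-and-uniqueness claims to the general one-dimensional BSDE result in Lemma~\ref{lem:BSDE}; the only genuine content is the integrability assertion (i), which I would prove by combining the polynomial growth bounds of Remark~\ref{rem:objective} with the a priori $\mathscr{S}^p$-estimates of Lemma~\ref{lem:p_integrability} and Remark~\ref{rem:p_char}.

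For (i) I would first record measurability: $f(\cdot,x,y)$ is ${\cal F}_T$-measurable and $g(\cdot,\cdot,x,h)$ is $\mathbb{F}$-predictable by Assumption~\ref{as:objective}(i), their partial derivatives inherit the same measurability as pointwise limits of difference quotients, and $X^{H;o},S^o$ are continuous and adapted while $H$ is predictable, so $A^H,B^H$ are ${\cal F}_T$-measurable and $R^H$ is $\mathbb{F}$-predictable. For the $L^2$-bounds, Remark~\ref{rem:objective}(i) together with $|\nabla_s l|\leq C_l$ (Remark~\ref{rem:objective}(ii)) gives
\[
	|A^H|+|B^H|\leq (1+C_l)\widetilde C\bigl(1+|X_T^{H;o}|^{r+1}+|l(S_T^o)|^{r+1}\bigr),
\]
and, using additionally $|H_t|\leq K$ $\mathbb{P}\otimes dt$-a.e.\ (Assumption~\ref{dfn:control}(iii)), $|R_t^H|\leq\widetilde C(1+|X_t^{H;o}|^{r+1}+K^{r+1})$ for all $t\in[0,T]$. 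Since $|l(s)|\leq C_l(1+|s|)$ and $2(r+1)<p$ — which holds because $r<\frac{p-2}{2}$ by Assumption~\ref{as:objective}(ii) — Lemma~\ref{lem:p_integrability}(ii) and Remark~\ref{rem:p_char} yield $X^{H;o},S^o\in\mathscr{S}^p\subseteq\mathscr{S}^{2(r+1)}$, so all the right-hand sides above lie in $L^2$; in particular $A^H\in L^2({\cal F}_T;\mathbb{R})$, $B^H\in L^2({\cal F}_T;\mathbb{R}^d)$, and $\|R^H\|_{\mathbb{L}^2}^2=\mathbb{E}[\int_0^T|R_t^H|^2\,dt]\leq T\,\mathbb{E}[\sup_{t\in[0,T]}|R_t^H|^2]<\infty$.

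Granting (i), part (ii) is immediate: I would apply Lemma~\ref{lem:BSDE} with $X=A^H$ and $R=R^H$ to obtain the unique $(Y^H,Z^H,L^H)\in\mathscr{S}^2(\mathbb{R})\times\mathscr{H}^2(\mathbb{R}^d)\times\mathscr{M}^2(\mathbb{R})$ solving \eqref{eq:BSDE_univ1}, with $L^H$ strongly orthogonal to $(\int_0^\cdot(Z_s^H)^\top dW_s)_{t\in[0,T]}$. For part (iii), the key observation is that the $d$-dimensional BSDE \eqref{eq:BSDE_univ3} carries no $ds$-term and hence decouples into $d$ independent scalar BSDEs, the $i$-th having terminal datum $\partial_y f(X_T^{H;o},l(S_T^o))\,\partial_{s_i}l(S_T^o)\in L^2({\cal F}_T;\mathbb{R})$ (integrable as above) and zero generator; applying Lemma~\ref{lem:BSDE} componentwise (with $R\equiv0$) and stacking the solutions yields $({\cal Y}^H,{\cal Z}^H,{\cal L}^H)\in(\mathscr{S}^2(\mathbb{R}))^d\times(\mathscr{H}^2(\mathbb{R}^d))^d\times(\mathscr{M}^2(\mathbb{R}))^d$, the required strong orthogonality holding componentwise. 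I expect no real obstacle in this proposition; the one point demanding care is the exponent bookkeeping $2(r+1)<p$ behind the $L^2$-integrability, which is precisely why Assumption~\ref{as:objective}(ii) imposes $r<\min\{\frac{p-2}{2},p-3\}$.
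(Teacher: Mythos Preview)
Your proposal is correct and follows essentially the same route as the paper: establish the $L^2$-integrability in (i) via the growth bounds of Remark~\ref{rem:objective} combined with the $\mathscr{S}^p$-estimates of Lemma~\ref{lem:p_integrability} and the exponent condition $2(r+1)<p$, then deduce (ii) and (iii) directly from Lemma~\ref{lem:BSDE} (componentwise for (iii)). The paper omits the explicit measurability check and phrases the integrability via H\"older's inequality with exponent $p/(2(r+1))$ rather than the embedding $\mathscr{S}^p\subseteq\mathscr{S}^{2(r+1)}$, but these are cosmetic differences.
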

\begin{proof} 
	We start by proving (i). 
	Let $0<  r <  \min\{\frac{p-2}{2}, p-3\}$ be as in Assumption \ref{as:objective}\;(ii). 
	It follows from Remark \ref{rem:objective}\;(i) and the power triangle inequality that 
	\begin{align*}
			\lvert \partial_{x} f(\omega,x,y)\rvert^2+\lvert \partial_{x} g(\omega,t,x,h)\rvert^2 \leq (4\widetilde{C})^2\left(1+ \lvert x \rvert^{2(r+1)}+ \lvert h \rvert^{2(r+1)}+ \lvert y \rvert^{2(r+1)}  \right)
	\end{align*}
	for every $(\omega,t,x,y,h) \in \Omega \times [0,T]\times \mathbb{R}\times \mathbb{R}\times \mathbb{R}^d$. Therefore, by Lemma \ref{lem:p_integrability}\;(ii) and H\"older's inequality (with exponent $\frac{p}{2(r+1)}>1$), we obtain that 
	\begin{align*}
		\begin{aligned}
			\sup_{H\in {\cal A}}
			\big\|A^H\big\|_{L^2}^{2}
			&\leq (4\widetilde{C})^2\sup_{H\in {\cal A}} \mathbb{E}\bigg[1 +|X_T^{H;o}|^{2(r+1)}+  | l(S_T^o)|^{2(r+1)}\bigg]\\
			&\leq (4\widetilde{C})^2 \Big(1 +\sup_{H\in {\cal A}}\lVert X^{H;o} \rVert_{\mathscr{S}^p}^{{2(r+1)}}+ \big\|l(S^o)\big\|_{\mathscr{S}^p}^{{2(r+1)}}\Big)<\infty.
		\end{aligned}
	\end{align*}
	Similarly, since $\lvert H_t \rvert \leq K$ $\mathbb{P}\otimes dt$-a.e., 
	\begin{align*}
		\begin{aligned}
		\sup_{H\in {\cal A}} 
		\big\|R^H\big\|_{\mathbb{L}^2}^2
		&\leq (4\widetilde{C})^2\sup_{H\in {\cal A}} \mathbb{E}\bigg[\int_0^T\Big(1 +|X_t^{H;o}|^{2(r+1)}+  | H_t|^{2(r+1)}\Big)dt\bigg]\\
		&\leq (4\widetilde{C})^2 T \Big(1 +\sup_{H\in {\cal A}}\lVert X^{H;o} \rVert_{\mathscr{S}^p}^{{2(r+1)}}+ K^{2(r+1)}\Big)<\infty.
		\end{aligned}
	\end{align*}

	Next, since $\lvert \nabla_s l (\cdot)\rvert \leq C_l$ (see Remark \ref{rem:objective}\;(ii)), the same arguments devoted for showing that $\sup_{H\in {\cal A}} \| A^H\|_{L^2}<\infty$ ensure that $\sup_{H\in {\cal A}} \| B^H\|_{L^2}<\infty$, as claimed.
	
	The statements (ii) and (iii) follow directly from Lemma \ref{lem:BSDE}.
\end{proof}

\subsection{Stability results}\label{sec:stability}
This section is devoted to showing stability results of the forward process $X^{H;o}$ and the backward triplets $(Y^H,Z^H,L^H)$ and $({\mathcal{Y}}^H,{\mathcal{Z}}^H,{\mathcal{L}}^H)$ (introduced in Proposition \ref{pro:BSDE}\;(ii)) with respect to $H\in{\cal A}$, which will play an essential role in the proof of Theorem~\ref{thm:main}. 
We begin with  a priori estimates of the forward process.

\begin{lem}\label{lem:stability0}
	Suppose that Assumption \ref{as:posterior_refer} is satisfied, let $\beta \in(1,p)$, and set $\alpha(\beta)= \frac{p\beta}{p-\beta}>1$. Then, for any $G,H\in {\cal A}$,
	\[
	\quad	\big\lVert X^{G;o}- X^{H;o}\big\rVert^\beta_{\mathscr{S}^\beta}\leq  C({\beta})\Bigg(\lVert G-H \rVert_{\mathbb{L}^{\alpha(\beta)}}^\beta\lVert b^o \rVert_{\mathbb{L}^p}^{\beta} + \mathbb{E}\Bigg[\bigg(\int_0^T \big \lvert (\sigma_t^o)^\top (G_t-H_t)\big \rvert^{2} dt \bigg)^{\frac{\beta}{2}}\Bigg] \Bigg),
	\]
	where  $C({\beta}):= 2^\beta \max\{C_{\operatorname{BDG},\beta}, T^{\beta -1}\}>0$. 
\end{lem}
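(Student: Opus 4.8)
The plan is to prove the estimate directly from the definition of $X^{H;o}$ in \eqref{eq:control_process} by writing the difference as a drift integral plus a stochastic integral, and then estimating each piece separately. First I would write, for $G,H\in\mathcal{A}$,
\[
X_t^{G;o}-X_t^{H;o} = \int_0^t (G_u-H_u)^\top b_u^o\,du + \int_0^t (G_u-H_u)^\top \sigma_u^o\,dW_u,\qquad t\in[0,T],
\]
and apply the power triangle inequality \eqref{eq:power_triangle} with $m=2$ and exponent $\beta$ to bound $\lVert X^{G;o}-X^{H;o}\rVert_{\mathscr{S}^\beta}^\beta$ by $2^\beta$ times the sum of the supremum-norm contributions of the two integrals. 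This produces the prefactor $2^\beta$ appearing in $C(\beta)$.

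For the stochastic-integral term I would invoke the Burkholder--Davis--Gundy inequality with exponent $\beta$ (legitimate since $\beta>1$), which gives
\[
\mathbb{E}\Bigl[\sup_{t\in[0,T]}\Bigl|\int_0^t (G_u-H_u)^\top \sigma_u^o\,dW_u\Bigr|^\beta\Bigr]
\leq C_{\operatorname{BDG},\beta}\,\mathbb{E}\Bigl[\Bigl(\int_0^T |(\sigma_t^o)^\top(G_t-H_t)|^2\,dt\Bigr)^{\beta/2}\Bigr],
\]
which is exactly the second term in the asserted bound (up to the constant, absorbed into $\max\{C_{\operatorname{BDG},\beta},T^{\beta-1}\}$). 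For the drift term I would use Jensen's / Hölder's inequality in the time variable: $\sup_{t}\bigl|\int_0^t (G_u-H_u)^\top b_u^o\,du\bigr|^\beta \leq T^{\beta-1}\int_0^T |(G_t-H_t)^\top b_t^o|^\beta\,dt$, and then pointwise $|(G_t-H_t)^\top b_t^o|\leq |G_t-H_t|\,|b_t^o|$. Taking expectations and applying Hölder's inequality with the conjugate pair $\bigl(\tfrac{p}{\beta},\tfrac{p}{p-\beta}\bigr)$ — note $\tfrac{p}{p-\beta}\cdot\beta = \tfrac{p\beta}{p-\beta}=\alpha(\beta)$ — yields
\[
\mathbb{E}\Bigl[\int_0^T |G_t-H_t|^\beta |b_t^o|^\beta\,dt\Bigr]
\leq \Bigl(\mathbb{E}\Bigl[\int_0^T |G_t-H_t|^{\alpha(\beta)}\,dt\Bigr]\Bigr)^{\beta/\alpha(\beta)} \Bigl(\mathbb{E}\Bigl[\int_0^T |b_t^o|^p\,dt\Bigr]\Bigr)^{\beta/p}
= \lVert G-H\rVert_{\mathbb{L}^{\alpha(\beta)}}^\beta\,\lVert b^o\rVert_{\mathbb{L}^p}^\beta,
\]
which is the first term. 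Collecting the two bounds and taking the common constant $C(\beta)=2^\beta\max\{C_{\operatorname{BDG},\beta},T^{\beta-1}\}$ completes the argument.

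There is no serious obstacle here; the only point requiring a little care is the bookkeeping of exponents in the application of Hölder's inequality — one must check that $\tfrac{p}{\beta}$ and $\tfrac{p}{p-\beta}$ are indeed conjugate (they are, since $\tfrac{\beta}{p}+\tfrac{p-\beta}{p}=1$) and that the resulting power on $\lVert G-H\rVert$ comes out as the $\mathbb{L}^{\alpha(\beta)}$-norm to the $\beta$, which it does precisely because $\alpha(\beta)=\tfrac{p\beta}{p-\beta}$ was chosen so that $\alpha(\beta)\cdot\tfrac{\beta}{\alpha(\beta)}=\beta$ while the outer exponent $\tfrac{\beta}{\alpha(\beta)}=\tfrac{p-\beta}{p}$ matches. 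One also notes $\alpha(\beta)>1$ since $\beta>1$ and $p>\beta$, so $\mathbb{L}^{\alpha(\beta)}$ is a genuine norm; and $b^o\in\mathbb{L}^p$ by Assumption \ref{as:posterior_refer}\,(i), so the right-hand side is finite whenever $G-H\in\mathbb{L}^{\alpha(\beta)}$ (in particular always, since $\mathcal{A}$ consists of uniformly bounded processes by Assumption \ref{dfn:control}\,(iii)).
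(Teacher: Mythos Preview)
Your proof is correct and follows essentially the same approach as the paper: split the difference into drift and stochastic integrals via the power triangle inequality, apply BDG to the stochastic part, and use H\"older/Jensen on the drift to extract the $\mathbb{L}^{\alpha(\beta)}$ and $\mathbb{L}^p$ norms. The only cosmetic difference is that the paper handles the drift term by first applying H\"older in $t$ (with exponent $p$), then H\"older in $\omega$ (with exponent $p/\beta$), and finally Jensen to pass to the $\mathbb{L}^{\alpha}$-norm, whereas you apply Jensen in $t$ first and then a single H\"older on $\Omega\times[0,T]$; both routes yield the identical bound with the same constant $C(\beta)$.
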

\begin{proof}
	Set $\Delta:=G-H$ and write $\alpha=\alpha(\beta)$. 
	First note that by the power triangle inequality and the BDG inequality,
	\begin{align*}
		\begin{aligned}
			\big\lVert X^{G;o}- X^{H;o}\big\rVert^\beta_{\mathscr{S}^\beta}&\leq  2^\beta\Bigg( \mathbb{E}\Bigg[\sup_{t\in[0,T]}\bigg| \int_0^t \Delta_s^\top b_s^o ds \bigg|^\beta \Bigg]+C_{\operatorname{BDG},\beta}\mathbb{E}\Bigg[\bigg(\int_0^T \big \lvert (\sigma_t^o)^\top \Delta_t\big \rvert^{2} dt\bigg)^{\frac{\beta}{2}}\Bigg]\Bigg)\\
			&\leq  2^\beta\Bigg( \mathbb{E}\Bigg[\bigg( \int_0^T |\Delta_t^\top b_t^o| dt \bigg)^\beta \Bigg]+C_{\operatorname{BDG},\beta}\mathbb{E}\Bigg[\bigg(\int_0^T \big \lvert (\sigma_t^o)^\top \Delta_t\big \rvert^{2} dt\bigg)^{\frac{\beta}{2}}\Bigg]\Bigg).
		\end{aligned}
	\end{align*}
	Moreover, a twofold application of H\"older's inequality (first with  exponent $p>3$  followed by exponent  $\frac{p}{\beta}>1$) shows that 
	\[
	\mathbb{E}\Bigg[\bigg( \int_0^T |\Delta_t^\top b_t^o| dt \bigg)^\beta \Bigg]
	\leq \mathbb{E}\Bigg[\bigg(\int_0^T \lvert \Delta_t\rvert^{q}dt \bigg)^{^{\frac{\beta}{q}}}\bigg(\int_0^T \lvert b_t^o \rvert^p dt \bigg)^{\frac{\beta}{p}}\Bigg] \leq \mathbb{E}\Bigg[\bigg(\int_0^T \lvert  \Delta_t\rvert^{q}dt \bigg)^{\frac{\alpha}{q}} \Bigg]^{\frac{\beta}{\alpha}} \lVert b^o \rVert_{\mathbb{L}^p}^{\beta}.
	\] 
	Finally, since $x\rightarrow |x|^{\frac{\alpha}{q}}$ is convex (noting that $\frac{\alpha}{q}=\frac{\beta p}{q(p-\beta)}>1$), Jensen's inequality ensures that $\mathbb{E}[(\int_0^T \lvert  \Delta_t\rvert^{q}dt )^{\frac{\alpha}{q}} ]^{\frac{\beta}{\alpha}} \leq  T^{\beta-1} \lVert \Delta \rVert_{\mathbb{L}^{\alpha}}^\beta$, as claimed.
\end{proof}


\begin{lem}\label{lem:stability}
	Suppose that Assumptions \ref{as:posterior_refer} and \ref{dfn:control} are satisfied and let 
		 $(H^n)_{n\in \mathbb{N}}\subseteq {\cal A}$ and $H^\star \in {\cal A}$  such that $| X_T^{H^n;o} - X_T^{H^\star;o} |\xrightarrow[]{\mathbb{P}} 0$ as $n\rightarrow \infty$.
	Then, for every $\beta \geq 1$,
	\begin{align*}
		\lVert H^n-H^\star\lVert_{\mathbb{L}^\beta} \rightarrow 0\quad \mbox{as $n\rightarrow 0$}.
	\end{align*}
	Moreover, $\int_0^T\big\lvert (\sigma_t^o)^\top (H_t^{n}-H_t^\star)\big\rvert^2 dt \xrightarrow[]{\mathbb{P}} 0$ as $n\to\infty$.
\end{lem}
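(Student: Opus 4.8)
The plan is to pass to the equivalent martingale measure $\mathbb{Q}$ of Remark~\ref{rem:p_char}, under which the drift of $S^o$ is absorbed so that $X^{H;o}_T=x_0+\int_0^T H_u^\top\sigma^o_u\,dW^{\mathbb{Q}}_u$ is a pure stochastic integral against the $\mathbb{Q}$-Brownian motion $W^{\mathbb{Q}}$. This turns the hypothesis on the terminal values into an $L^2(\mathbb{Q})$ statement via the It\^o isometry, from which one recovers convergence of the integrands $(\sigma^o)^\top(H^n-H^\star)$, and then of $H^n-H^\star$ itself.

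Concretely, I would proceed as follows. Write $\Delta^n := H^n-H^\star$ and $\phi^n := (\sigma^o)^\top\Delta^n$; Assumption~\ref{dfn:control}\;(iii) gives $|\Delta^n|\le 2K$ $\mathbb{P}\otimes dt$-a.e., so $\int_0^T|\phi^n_u|^2\,du\le G := 4K^2\int_0^T\|\sigma^o_u\|_{\operatorname{F}}^2\,du$, which lies in $L^{p/2}(\mathbb{P})$ by Assumption~\ref{as:posterior_refer}\;(i) and, since $d\mathbb{Q}/d\mathbb{P}={\cal D}_T$ has moments of all orders, in $L^r(\mathbb{Q})$ for every $1\le r<p/2$ by H\"older's inequality. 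Using $dW_u=dW^{\mathbb{Q}}_u-(\sigma^o_u)^{-1}b^o_u\,du$ one checks that $M^n_t := X^{H^n;o}_t-X^{H^\star;o}_t=\int_0^t(\phi^n_u)^\top\,dW^{\mathbb{Q}}_u$ is a square-integrable $\mathbb{Q}$-martingale with $\langle M^n\rangle_T=\int_0^T|\phi^n_u|^2\,du\le G$, so the It\^o isometry gives $\mathbb{E}^{\mathbb{Q}}[(M^n_T)^2]=\mathbb{E}^{\mathbb{Q}}[\int_0^T|\phi^n_u|^2\,du]$. Since $M^n_T\xrightarrow[]{\mathbb{P}}0$ and $\mathbb{Q}\sim\mathbb{P}$, also $M^n_T\xrightarrow[]{\mathbb{Q}}0$; fixing $r\in(1,p/2)$, the BDG inequality under $\mathbb{Q}$ yields $\mathbb{E}^{\mathbb{Q}}[|M^n_T|^{2r}]\le C_{\operatorname{BDG},2r}\,\mathbb{E}^{\mathbb{Q}}[\langle M^n\rangle_T^{r}]\le C_{\operatorname{BDG},2r}\,\mathbb{E}^{\mathbb{Q}}[G^r]<\infty$ uniformly in $n$, so $\{(M^n_T)^2\}_n$ is bounded in $L^r(\mathbb{Q})$ with $r>1$, hence $\mathbb{Q}$-uniformly integrable, and therefore $\mathbb{E}^{\mathbb{Q}}[(M^n_T)^2]\to0$. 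By the It\^o isometry $\int_0^T|(\sigma^o_u)^\top\Delta^n_u|^2\,du\to0$ in $L^1(\mathbb{Q})$, hence in $\mathbb{Q}$-measure and, by equivalence, in $\mathbb{P}$-measure; being dominated by $G\in L^{p/2}(\mathbb{P})$ with $p/2>1$, this sequence is $\mathbb{P}$-uniformly integrable, so the convergence even holds in $L^1(\mathbb{P})$. This already proves the ``Moreover'' assertion.

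It remains to descend from $\phi^n=(\sigma^o)^\top\Delta^n$ to $\Delta^n$, which I expect to be the main obstacle: Assumption~\ref{as:posterior_refer}\;(ii) only guarantees $\mathbb{P}\otimes dt$-a.e.\ invertibility of $\sigma^o$, not a uniform ellipticity bound, so $(\sigma^o)^{-1}$ need not be bounded and one cannot conclude with a single Cauchy--Schwarz estimate. Instead I would truncate on the sets $E_N := \{(\omega,u):\|(\sigma^o_u(\omega))^{-1}\|_{\operatorname{F}}\le N\}$, on which $|\Delta^n_u|\le N\,|\phi^n_u|$, and use that $(\mathbb{P}\otimes dt)((E_N)^c)\downarrow0$ as $N\to\infty$ since $(\sigma^o)^{-1}$ is $\mathbb{P}\otimes dt$-a.e.\ finite and $\mathbb{P}\otimes dt$ is a finite measure. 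Then for every $\eta>0$, $(\mathbb{P}\otimes dt)(|\Delta^n|>\eta)\le(\mathbb{P}\otimes dt)((E_N)^c)+(N/\eta)^2\,\mathbb{E}[\int_0^T|\phi^n_u|^2\,du]$; letting $n\to\infty$ and then $N\to\infty$ shows $\Delta^n\to0$ in $\mathbb{P}\otimes dt$-measure. Finally, since $|\Delta^n|^\beta\le(2K)^\beta$ and $\mathbb{P}\otimes dt$ is finite, dominated convergence gives $\|H^n-H^\star\|_{\mathbb{L}^\beta}^\beta=\mathbb{E}[\int_0^T|\Delta^n_u|^\beta\,du]\to0$ for every $\beta\ge1$. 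A secondary point requiring care is that the change of measure to $\mathbb{Q}$ degrades integrability — $G$ lies only in $L^{p/2}(\mathbb{P})$ — which forces the BDG exponent to be taken as $2r$ with $r<p/2$, rather than as $p$.
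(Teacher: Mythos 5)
Your argument is correct and takes essentially the same route as the paper's: pass to the equivalent martingale measure $\mathbb{Q}$, establish uniform integrability of $(M^n_T)^2$ under $\mathbb{Q}$ by proving $L^r(\mathbb{Q})$-boundedness for some $1<r<p/2$, apply the It\^o isometry to recover $L^1(\mathbb{Q})$-convergence of $\int_0^T|(\sigma^o_u)^\top(H^n_u-H^\star_u)|^2\,du$, and then exploit the $\mathbb{P}\otimes dt$-a.e.\ invertibility of $\sigma^o$ together with the uniform bound $|H_t|\le K$ to conclude by dominated convergence. The only cosmetic differences are that you verify the uniform moment bound via BDG applied to an $n$-independent dominating integrand $G$ (rather than the paper's direct H\"older estimate of $\mathbb{E}[\mathcal{D}_T\,|M^n_T|^{2v}]$ against the $L^p$-bound on $X^{H;o}_T$), and that you spell out the descent from $(\sigma^o)^\top(H^n-H^\star)$ to $H^n-H^\star$ via truncation on $\{\|(\sigma^o)^{-1}\|_{\operatorname{F}}\le N\}$, whereas the paper treats that implication as immediate from a.e.\ invertibility.
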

\begin{proof}
	Recall that the measure $\mathbb{Q}\sim \mathbb{P}$ is defined in \eqref{eq:mtg_measure} and that ${W}^{\mathbb{Q}}= {W}+\int_0^\cdot (\sigma_s^o)^{-1}{b}_s^o  ds$  is a $(\mathbb{F},\mathbb{Q})$-Brownian motion.
	In particular, $S^o$ is an $(\mathbb{F},\mathbb{Q})$-local martingale since $S^o=S^o_0+\int_0^\cdot \sigma_s^o d{W}_s^{\mathbb{Q}}$.
	 Furthermore, for each $n\in\mathbb{N}$,  $((H^n \cdot S^o)_t)_{t\in[0,T]}$ is an $(\mathbb{F},\mathbb{Q})$-local martingale since
	\begin{align*}
		(H^n\cdot S^o)_t=\int_0^t (H_s^n)^\top \sigma_r^o d{W}_r^{\mathbb{Q}},\quad t\in[0,T].
	\end{align*}
	
	Next note that by the  assumption on $(H^n)_{n\in\mathbb{N}}$,
	\begin{eqnarray}\label{eq:conv1_P}
		\big\lvert X_T^{H^{n};o}-X_T^{H^\star;o}\big\rvert^2 \xrightarrow[]{\mathbb{Q}} 0\quad \mbox{as $n\rightarrow \infty$}.
	\end{eqnarray}
	Moreover, as $\lvert H_t^\star \rvert, \lvert H_t^{n}\rvert  \leq K$ $\mathbb{Q}\otimes dt$-a.e.\;(by Assumption \ref{dfn:control} and the equivalence $\mathbb{Q}\sim \mathbb{P}$), the power triangle inequality, and H\"older's inequality  (with exponent $\frac{p}{2v}>1$), it follows for every $1<v<\frac{p}{2}$ that 
	\begin{align*}
		\begin{aligned}
			\sup_{n\in\mathbb{N}}\mathbb{E}^{\mathbb{Q}} \Big[ \big\lvert X_T^{H^{n};o}-X_T^{H^\star;o}\big\rvert^{2v} \Big]
			&=\sup_{n\in\mathbb{N}}\mathbb{E} \Big[{\cal D}_T \big\lvert X_T^{H^{n};o}-X_T^{H^\star;o}\big\rvert^{2v} \Big]\\
			&\leq 2^{2v}\Big(\sup_{n\in\mathbb{N}}\mathbb{E}\Big[{\cal D}_T \big\lvert X_T^{H^{n};o}\big\rvert^{2v}\Big]+\mathbb{E}\Big[{\cal D}_T \big\lvert X_T^{H^{\star};o}\big\rvert^{2v}\Big] \Big)
			\\
			&\leq 2^{2v} \lVert {\cal D}_T\rVert_{L^{\frac{p}{p-2v}}}\Big(\sup_{n\in\mathbb{N}} \big\lVert X_T^{H^{n};o}\big\rVert_{L^p}^{2v}+\big\lVert X_T^{H^{\star};o}\big\rVert_{L^p}^{2v} \Big).
		\end{aligned}
	\end{align*}
	Hence, using the $L^\beta$-boundedness of the exponential martingale  ${\cal D}_T$ (see Assumption \ref{as:posterior_refer}\;(ii)), and Lemma \ref{lem:p_integrability}\;(ii) with the fact that $(H^{n})_{n\in \mathbb{N}}\subseteq {\cal A}$ and $H^\star\in{\cal A}$, it follows that  
	\[
	\sup_{n\in\mathbb{N}}\mathbb{E}^{\mathbb{Q}} \Big[ \big\lvert X_T^{H^{n};o}-X_T^{H^\star;o}\big\rvert^{2v}  \Big]<\infty.
	\]
	In particular, since $v>1$, the de la Vall\'ee Poussin theorem \cite[Theorem 6.19]{Klenke14} ensures the uniform integrability of $(\lvert X_T^{H^{n};o}-X_T^{H^\star;o}\rvert^2 )_{n\in \mathbb{N}}$ with respect to $\mathbb{Q}$; thus $\mathbb{E}^{\mathbb{Q}} [ \big\lvert X_T^{H^{n;o}}-X_T^{H^\star;o}\big\rvert^2]\to~0$ as $n\to\infty$ by  \eqref{eq:conv1_P} and Vitali's convergence theorem \cite[Theorem 6.25]{Klenke14}.
	Using that $S_\cdot^o =S_0^o+ \int_0^\cdot \sigma_s^o d{W}_s^{\mathbb{Q}}$ under $\mathbb{Q}$, the It\^o-isometry implies that 
	\begin{align}\label{eq:conv_Q}
		0=\lim_{n\rightarrow \infty}\mathbb{E}^{\mathbb{Q}} \Big[ \big\lvert X_T^{H^{n;o}}-X_T^{H^\star;o}\big\rvert^2 \Big]= \lim_{n\rightarrow \infty}\mathbb{E}^\mathbb{Q}\bigg[\int_0^T\big\lvert (\sigma_t^o)^\top (H_t^{n}-H_t^\star)\big\rvert^2 dt \bigg]
	\end{align}
	and in particular that  $\int_0^T\big\lvert (\sigma_t^o)^\top (H_t^{n}-H_t^\star)\big\rvert^2 dt \xrightarrow[]{\mathbb{P}} 0$ as $n\to\infty$ since $\mathbb{Q}\sim\mathbb{P}$.
	
	Moreover, combining \eqref{eq:conv_Q} with the fact that $\sigma^o$ is invertible (see\;Assumption\;\ref{as:posterior_refer}\;(ii))  shows that
	\begin{align}\label{eq:conv_pdt}
		H_t^{n}\xrightarrow[]{\mathbb{P}\otimes dt } H_t^\star\quad \mbox{as $n\rightarrow \infty$}.
	\end{align}
	Finally, since $\lvert H_t^\star \rvert, \lvert H_t^{n}\rvert  \leq K$ $\mathbb{P}\otimes dt$-a.e., the dominated convergence theorem guarantees that \eqref{eq:conv_pdt} implies that for every $\beta\geq 1$,  $\lVert H^n-H^\star\lVert_{\mathbb{L}^\beta} \rightarrow 0$ as $n\to\infty$.
	This completes the proof.
\end{proof}

\begin{lem}\label{lem:stability_1}
	Suppose that Assumptions  \ref{as:posterior_refer}, \ref{dfn:control}, and \ref{as:objective} are satisfied and let  $(H^n)_{n\in \mathbb{N}}\subseteq {\cal A}$ and $H^\star \in {\cal A}$  such that $| X_T^{H^n;o} - X_T^{H^\star;o} |\xrightarrow[]{\mathbb{P}} 0$ as $n\rightarrow \infty$.
	\begin{enumerate}
	\item[(i)] Denote by  $(Y^{n},Z^{n},L^{n})$ and $(Y^\star,Z^\star, L^\star)$ the unique solutions of \eqref{eq:BSDE_univ1} (ensured by Proposition \ref{pro:BSDE}) under the terminal conditions and generators
	\begin{align*}
		\qquad\qquad
	A^{H^n}&=\partial_{x}f\big(X_T^{H^n;o},l(S_T^o)\big),\quad\quad  (R_t^{H^n})_{t\in[0,T]}=(\partial_xg(t,X_t^{H^n;o},H^n_t))_{t\in[0,T]}, \\ 
	A^{H^\star}&=\partial_{x}f\big(X_T^{H^\star ;o},l(S_T^o)\big),\quad\quad\, (R^{H^\star}_t)_{t\in[0,T]}=(\partial_xg(t,X_t^{H^\star;o},H^\star_t))_{t\in[0,T]},
	\end{align*}
	respectively.
	Then, as $n\to\infty$,
	\begin{align}\label{eq:estimate21}
		\lVert Y^n - Y^\star \rVert_{\mathscr{S}^2} + \lVert Z^n - Z^\star \rVert_{\mathscr{H}^2}+\lVert L^n - L^\star \rVert_{\mathscr{M}^2} \rightarrow 0.     	
	\end{align}

	\item[(ii)]  Denote by $({\cal Y}^{n}, {\cal Z}^{n} , {\cal L}^{n})$ and $({\cal Y}^{\star}, {\cal Z}^{\star}, {\cal L}^{\star})$  the unique solutions of \eqref{eq:BSDE_univ3} under the terminal conditions 
	\begin{align*}
		\quad \quad B^{H^n}=\partial_{y}f(X_T^{H^n;o},l(S_T^o))\nabla l_s(S_T^o),\quad  B^{H^\star}=\partial_{y}f(X_T^{H^\star;o},l(S_T^o))\nabla_s l(S_T^o),
	\end{align*}
	respectively.
	Then, for every $i=1,\dots,d$, as $n\to\infty$,
	\begin{align}\label{eq:estimate22}
		\lVert {\cal Y}^{n,i} -{\cal Y}^{\star,i} \rVert_{\mathscr{S}^2} + \lVert {\cal Z}^{n,i} -{\cal Z}^{\star,i}  \rVert_{\mathscr{H}^2}+\lVert {\cal L}^{n,i} -{\cal L}^{\star,i} \rVert_{\mathscr{M}^2} \rightarrow 0.
	\end{align}
	\end{enumerate}
\end{lem}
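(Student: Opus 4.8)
The plan is to exploit the fact that the BSDEs \eqref{eq:BSDE_univ1} and \eqref{eq:BSDE_univ3} are \emph{linear} in their solution triples -- neither driver depends on $(Y,Z,L)$ -- so that differences of solutions again solve a BSDE of the type treated in Lemma \ref{lem:BSDE}. Concretely, $(Y^n-Y^\star,\,Z^n-Z^\star,\,L^n-L^\star)$ solves \eqref{eq:BSDE_benchmark} with terminal datum $A^{H^n}-A^{H^\star}$ and driver $R^{H^n}_\cdot-R^{H^\star}_\cdot$ (it lies in $\mathscr{S}^2\times\mathscr{H}^2\times\mathscr{M}^2$ by Proposition \ref{pro:BSDE}(ii), and the orthogonality requirement is inherited by linearity), and for each $i$ the triple $(\mathcal{Y}^{n,i}-\mathcal{Y}^{\star,i},\,\mathcal{Z}^{n,i}-\mathcal{Z}^{\star,i},\,\mathcal{L}^{n,i}-\mathcal{L}^{\star,i})$ solves \eqref{eq:BSDE_benchmark} with terminal datum $B^{H^n,i}-B^{H^\star,i}$ and zero driver. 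Plugging these into the a priori estimate \eqref{eq:BSDE_priori} reduces \eqref{eq:estimate21} to proving $\|A^{H^n}-A^{H^\star}\|_{L^2}\to 0$ and $\|R^{H^n}-R^{H^\star}\|_{\mathbb{L}^2}\to 0$, and reduces \eqref{eq:estimate22} to proving $\|B^{H^n,i}-B^{H^\star,i}\|_{L^2}\to 0$ for each $i$.

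For the terminal data, the assumption $|X_T^{H^n;o}-X_T^{H^\star;o}|\xrightarrow{\mathbb{P}}0$ combined with the continuity of $\partial_x f$ and $\partial_y f$ (Assumption \ref{as:objective}(i)) and the continuous mapping theorem gives $A^{H^n}\xrightarrow{\mathbb{P}}A^{H^\star}$ and, since $l(S_T^o)$ and $\nabla_s l(S_T^o)$ do not depend on $n$ (the latter being bounded, Remark \ref{rem:objective}(ii)), $B^{H^n,i}\xrightarrow{\mathbb{P}}B^{H^\star,i}$. To upgrade to $L^2$-convergence I would use the de la Vall\'ee Poussin / Vitali machinery already employed in the proof of Lemma \ref{lem:stability}: by the polynomial growth in Remark \ref{rem:objective}(i), $|A^{H^n}|^2+|B^{H^n,i}|^2\le C\big(1+|X_T^{H^n;o}|^{2(r+1)}+|l(S_T^o)|^{2(r+1)}\big)$, and since $2(r+1)<p$ (because $r<\tfrac{p-2}{2}$) while $\sup_n\|X^{H^n;o}\|_{\mathscr{S}^p}<\infty$ by Lemma \ref{lem:p_integrability}(ii) and $\|l(S^o)\|_{\mathscr{S}^p}<\infty$, these families are bounded in $L^{p/(2(r+1))}$ with exponent $>1$, hence uniformly integrable; Vitali's theorem then yields the desired $L^2$-convergence.

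For the driver $R^{H^n}$ the same pattern works once one has the $\mathbb{P}\otimes dt$-convergences $X^{H^n;o}_t\to X^{H^\star;o}_t$ and $H^n_t\to H^\star_t$. The latter is exactly \eqref{eq:conv_pdt} (and Lemma \ref{lem:stability} applies, as its hypothesis is precisely our assumption on $(H^n)$), and Lemma \ref{lem:stability} also furnishes $\|H^n-H^\star\|_{\mathbb{L}^\beta}\to 0$ for all $\beta\ge 1$ together with $\int_0^T|(\sigma_t^o)^\top(H^n_t-H^\star_t)|^2\,dt\xrightarrow{\mathbb{P}}0$; feeding these into Lemma \ref{lem:stability0} -- after a short uniform-integrability argument (using $\sigma^o\in\mathbb{H}^p$ and $|H^n_t|,|H^\star_t|\le K$) to promote the in-probability convergence of $\int_0^T|(\sigma_t^o)^\top(H^n_t-H^\star_t)|^2\,dt$ to convergence of its $\tfrac{\beta}{2}$-th moment for $\beta<p$ -- gives $\|X^{H^n;o}-X^{H^\star;o}\|_{\mathscr{S}^\beta}\to 0$, whence $X^{H^n;o}_t\to X^{H^\star;o}_t$ in $\mathbb{P}\otimes dt$. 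Continuity of $\partial_x g$ then yields $R^{H^n}_t\to R^{H^\star}_t$ in $\mathbb{P}\otimes dt$, and the Vitali upgrade to $\mathbb{L}^2$ is verbatim the one above: by Remark \ref{rem:objective}(i) and $|H^n_t|\le K$, $|R^{H^n}_t|^2\le C(1+|X^{H^n;o}_t|^{2(r+1)}+K^{2(r+1)})$ is bounded in $L^{p/(2(r+1))}(\mathbb{P}\otimes dt)$ uniformly in $n$ by Lemma \ref{lem:p_integrability}(ii).

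I do not expect a genuine conceptual obstacle; the entire difficulty is bookkeeping of integrability exponents -- in particular the repeated use of the strict inequality $2(r+1)<p$ from Assumption \ref{as:objective}(ii) to land in an $L^q$-space with $q>1$ and hence deduce uniform integrability -- together with the minor technical chore of upgrading the in-probability convergences delivered by Lemmas \ref{lem:stability} and \ref{lem:stability0} to the moment convergences actually needed. Part (ii) is strictly easier than part (i) since there is no driver term; in both cases the conclusion follows by combining \eqref{eq:BSDE_priori} with the established $L^2$-convergence of the data.
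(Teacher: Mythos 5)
Your proposal is correct and follows the paper's overall architecture exactly: reduce via linearity of the BSDEs and the a priori estimate \eqref{eq:BSDE_priori} to $L^2$-convergence of the terminal data $A^{H^n}\to A^{H^\star}$, $B^{H^n,i}\to B^{H^\star,i}$ and $\mathbb{L}^2$-convergence of the generator $R^{H^n}\to R^{H^\star}$, and obtain these from the forward-stability results (Lemmas \ref{lem:stability0} and \ref{lem:stability}) which yield $\|X^{H^n;o}-X^{H^\star;o}\|_{\mathscr{S}^\beta}\to 0$ for $\beta<p$. Where you diverge from the paper is the final $L^2$-upgrade. The paper does not pass through uniform integrability: instead it exploits the fundamental theorem of calculus and the second-derivative growth in Assumption \ref{as:objective}(ii) to derive a weighted Lipschitz estimate, e.g.
\begin{align*}
\big|\partial_x f(\omega,x,y)-\partial_x f(\omega,\hat x,y)\big|^2
\leq (\overline C_2)^2\, 2^{r+4}\big(1+|x|^{2r}+|\hat x|^{2r}+|y|^{2r}\big)\,|x-\hat x|^2,
\end{align*}
and then applies H\"older with exponents $\tfrac{p}{2r}$ and $\tfrac{\widetilde p}{2}$ (where $\widetilde p=\tfrac{2p}{p-2r}$) to bound $\|A^{H^n}-A^{H^\star}\|_{L^2}^2$ directly by a uniformly bounded factor times $\|X_T^{H^n;o}-X_T^{H^\star;o}\|_{L^{\widetilde p}}^2$; the same device handles $R^{H^n}$ and $B^{H^n}$. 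Your route---convergence in $\mathbb{P}$ (resp.\ $\mathbb{P}\otimes dt$) via continuity of $\partial_x f,\partial_x g,\partial_y f$, plus uniform integrability of $|A^{H^n}|^2$, $|R^{H^n}|^2$, $|B^{H^n,i}|^2$ from boundedness in $L^{p/(2(r+1))}$ with $p/(2(r+1))>1$, plus Vitali---is equally valid and uses the same integrability budget ($2(r+1)<p$). The paper's argument buys an explicit modulus of continuity (a quantitative estimate that could give a rate), whereas yours is shorter on bookkeeping and avoids the FTC expansion, at the cost of being purely qualitative. One small remark to tighten your write-up: for the orthogonality of $L^n-L^\star$ against $\int(Z^n-Z^\star)^\top dW$, the clean argument is that $L^n$ and $L^\star$ are each strongly orthogonal to every $W^i$, so $L^n-L^\star$ is too, and then \cite[Lemma 2 \& Theorem 35, p.149]{Protter2005} applies; alternatively, as the paper does, one can simply invoke uniqueness in Proposition \ref{pro:BSDE}.
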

\begin{proof}
	We start by proving  (i).
	 It follows from Proposition \ref{pro:BSDE} that $(Y^{n}-Y^\star,Z^{n}-Z^\star,L^{n}-L^\star)$ 
	is the unique solution to the BSDE \eqref{eq:BSDE_benchmark} with the terminal condition  $A^{H^n}- A^{H^\star}$ and generator $R^{H^n}-R^{H^\star}$.
	Therefore, using the a priori estimate given in \eqref{eq:BSDE_priori},
	\[\lVert Y^n - Y^\star \rVert_{\mathscr{S}^2}^2 + \lVert Z^n - Z^\star \rVert_{\mathscr{H}^2}^2+\lVert L^n - L^\star \rVert_{\mathscr{M}^2} ^2
	\leq {C}_{\operatorname{ap}} \big( \lVert A^{H^n}- A^{H^\star} \rVert_{L^2}^2+ \|R^{H^n}-R^{H^\star}\|_{\mathbb{L}^2}^2 \big).
	\]
	Thus, all that is left to do is to prove that the two terms $\lVert A^{H^n}- A^{H^\star} \rVert_{L^2}^2$ and $\|R^{H^n}-R^{H^\star}\|_{\mathbb{L}^2}^2$ vanish as $n\to \infty$.
	
	\vspace{0.5em}
	\noindent {\it Step 1a. limit of $\lVert A^{H^n}- A^{H^\star} \rVert_{L^2}^2$ }: By Assumption \ref{as:objective}\;(ii) we have that  $\lvert \partial_{xx}f(\omega,x,y) \rvert \leq \overline C_2(1+\lvert x\rvert^r+\lvert y\rvert^r)$ for every $\omega\in\Omega$ and $(x,y)\in \mathbb{R}^2$. Hence an application of the fundamental theorem of calculus (and the power triangle inequality) shows that  for every $\omega\in\Omega$ and $x,\hat x,y\in \mathbb{R}$, 
	\begin{align}\label{eq:FTC}
		\begin{aligned}
			\big| \partial_{x}f(\omega,x,y)-\partial_{x}f(\omega,\hat x,y)\big|^2
			&\leq (\overline C_2)^2 2^{r+4}(1 + \rvert x\rvert^{2r} +  \lvert \hat x\rvert^{2r}  +\lvert y \rvert^{2r}  )\big\lvert x-\hat x\big\rvert^2.
		\end{aligned}
	\end{align}
	In particular, by the definition of $A^{H^n}$ and $A^{H^\star}$,
	\begin{align*}
	\lVert A^{H^n}- A^{H^\star} \rVert_{L^2}^2
	\leq (\overline C_2)^2 2^{r+4}\mathbb{E}\bigg[ \Big(1 + \rvert X_T^{H^{n};o}\rvert^{2r} +  \lvert X_T^{H^{\star};o}\rvert^{2r}  +\lvert l(S_T^o) \rvert^{2r}  \Big)\big\lvert X_T^{H^{n};o}-X_T^{H^{\star};o}\big\rvert^2  \bigg].
	\end{align*}
	
	In order to estimate the last term, set   $\widetilde{p}:=\frac{2p}{p-2r}$, where we recall that  $p>3$ and $0<r<\min\{\frac{p-2}{2},p-3\}$ (see Assumptions \ref{as:posterior_refer} and \ref{as:objective}\;(ii)), and note that $1< \widetilde{p}< p$.
	It follows from H\"older's inequality (with exponent $\frac{p}{2r}>1$ and conjugate exponent $\frac{\widetilde{p}}{2}>1$) and the power triangle inequality that
	\begin{align}\label{eq:FTC_1}
		\begin{aligned}
	&\lVert A^{H^n}- A^{H^\star} \rVert_{L^2}^2\\
	&\quad \leq (\overline C_2)^2  2^{r+4} 4^\frac{p}{2r} \Big(1+\lVert X_T^{H^{n};o} \rVert_{L^p}^{2r}+\lVert X_T^{H^*;o} \rVert_{L^p}^{2r}+\lVert l(S_T^o) \rVert_{L^p}^{2r}\Big) \big\lVert X_T^{H^{n};o}-X_T^{H^{\star};o}\big\rVert_{L^{\widetilde{p}}}^{2}.
		\end{aligned}
	\end{align}
	Since it holds for every $S\in \mathscr{S}^p(\mathbb{R}^d)$ that $\|S\|_{{L}^p}\leq \|S \|_{\mathscr{S}^p}$, by Lemma \ref{lem:p_integrability}\;(ii),
	\begin{align}\label{eq:estimate_integ}
		\sup_{n\in\mathbb{N}}\Big(1+\lVert X_T^{H^{n};o} \rVert_{L^p}^{2r}+\lVert X_T^{H^*;o} \rVert_{L^p}^{2r}+\lVert l(S_T^o) \rVert_{L^p}^{2r}\Big)<\infty,
	\end{align}
 	thus, it remains to show that $\| X_T^{H^{n};o}-X_T^{H^{\star};o} \|_{L^{\widetilde{p}}}\to 0$ as $n\rightarrow \infty$.
 
 	\vspace{0.5em}
 	To that end, we claim that 
 	\begin{align}\label{eq:sup_conv}
 		\big\lVert X^{H^{n};o}-X^{H^{\star};o}\big\rVert_{\mathscr{S}^{\widetilde{p}}}\to 0\quad \mbox{as $n\to \infty$}.
 	\end{align}
 	Indeed, since $1<\widetilde{p}<p$, an application of Lemma \ref{lem:stability0} (with $\beta =\widetilde{p}\in(1,p)$ and ${\alpha}={\alpha}(\widetilde{p})=\frac{\widetilde{p}p}{p-\widetilde{p}}>1$) ensures that
 	\begin{align*}\quad
 		\begin{aligned}
 			\big\lVert X^{H^{n};o}-X^{H^{\star};o}\big\rVert_{\mathscr{S}^{\widetilde{p}}}^{\widetilde{p}}
 			\leq C(\widetilde{p}) \Bigg( \lVert H^n-H^\star \rVert_{\mathbb{L}^{\alpha}}^{\widetilde{p}}\lVert b_t^o \rVert_{\mathbb{L}^p}^{\widetilde{p}}+ \mathbb{E}\Bigg[\bigg(\int_0^T \big \lvert (\sigma_t^o)^\top ({H}_t^n-H_t^\star)\big \rvert^{2} dt \bigg)^{\frac{\widetilde{p}}{2}}\Bigg] \Bigg),
 		\end{aligned}
 	\end{align*}
 	where $C(\widetilde{p})=2^{\widetilde{p}} \max\{C_{\operatorname{BDG},\beta},T^{\widetilde{p}-1}\}$.
 	Moreover, since $b^o\in \mathbb{L}^p(\mathbb{R}^d)$, it follows from Lemma \ref{lem:stability}  that as $n\to\infty$,
 	\[
 	\lVert H^n-H^\star \rVert_{\mathbb{L}^{\alpha}}^{\widetilde{p}}\cdot \lVert b^o \rVert_{\mathbb{L}^p}^{\widetilde{p}} \rightarrow 0.
 	\]
 	
 	Next we note that  $|H_t^\star- H_t^{n} |  \leq 2K$ $\mathbb{P}\otimes dt$-a.e. and $\sigma^o\in \mathbb{H}^p(\mathbb{R}^{d\times d})$. Therefore, since $\tilde{p}< p$ and $\int_0^T\big\lvert (\sigma_t^o)^\top (H_t^{n}-H_t^\star)\big\rvert^2 dt  \xrightarrow[]{\mathbb{P}} 0$ as $n\to\infty$ (by Lemma \ref{lem:stability}), the dominated convergence theorem shows that as $n\to\infty$,
 	\[
 	\mathbb{E}\Bigg[\bigg( \int_0^T\big\lvert (\sigma_t^o)^\top (H_t^{n}-H_t^\star)\big\rvert^2 dt \bigg)^{\frac{\widetilde{p}}{2}} \Bigg]\to 0.
 	\] 
 	We conclude that indeed $
 	\lim_{n\to \infty}\|X_T^{H^{n};o}-X_T^{H^{\star};o}\|_{L^{\widetilde{p}}} \leq \lim_{n\to \infty}\lVert X^{H^{n};o}-X^{H^{\star};o}\rVert_{\mathscr{S}^{\widetilde{p}}}=0.$ 

	\vspace{0.5em}
	\noindent {\it Step 1b. limit of $\|R^{H^n}-R^{H^\star}\|_{\mathbb{L}^2}^2$} : By Assumption \ref{as:objective}\;(ii), we have that  $\lvert \partial_{xh}g(\omega,t,x,h) \rvert +\lvert \partial_{xx}g(\omega,t,x,h) \rvert\leq  2 \overline C_2(1+\lvert (x,h)^\top\rvert^r)$ for every $(\omega,t,x,h)\in \Omega \times [0,T]\times \mathbb{R}\times \mathbb{R}^d$. Using the same arguments presented for \eqref{eq:FTC}, we have that for every $(\omega,t,x,h)\in \Omega \times [0,T]\times \mathbb{R}\times \mathbb{R}^d$ and $(\hat x,\hat h)\in \mathbb{R}\times \mathbb{R}^d$, 
	\begin{align*}
		\begin{aligned}
			&\big|\partial_xg(\omega,t,x,h)-\partial_xg(\omega,t,\hat x,\hat h)\big|^2\\
			&\quad  \leq 2^2\Big( \big|\partial_xg(\omega,t,x,h)-\partial_xg(\omega,t,x,\hat h)\big|^2+\big|\partial_xg(\omega,t,x,\hat h)-\partial_xg(\omega,t,\hat x,\hat h)\big|^2 \Big) \\
			&\quad \leq (\overline C_2)^2 2^{r+8}\big(1 + \rvert x\rvert^{2r} +  \lvert \hat x\rvert^{2r}  +\lvert h \rvert^{2r}  +\lvert \hat h \rvert^{2r}  \big) \big(\lvert x-\hat x\rvert^2 + \lvert h-\hat h\rvert^2 \big).
		\end{aligned}
	\end{align*}
	In particular, by the definition of $R^{H^n}$ and $R^{H^\star}$, 
	\begin{align*}
		\begin{aligned}
			\|R^{H^n}-R^{H^\star}\|_{\mathbb{L}^2}^2
			&\leq (\overline C_2)^2 2^{r+8} \mathbb{E}\Bigg[\int_0^T\bigg\{\Big(1 + \rvert X_t^{H^{n};o}\rvert^{2r} +  \lvert X_t^{H^{\star};o}\rvert^{2r}  +\lvert H_t^n \rvert^{2r} +\lvert H_t^* \rvert^{2r} \Big)\bigg. \\
			&\hspace{10.em} \times\Big(\big\lvert X_t^{H^{n};o}-X_t^{H^{\star};o}\big\rvert^2+ \big\lvert H_t^{n}-H_t^{\star}\big\rvert^2 \Big)\bigg\} dt \Bigg].
		\end{aligned}
	\end{align*}

	From H\"older's inequality (with exponent $\frac{p}{2r}>1$ and conjugate exponent $\frac{\widetilde{p}}{2}>1$) and the power triangle inequality (with $\widetilde p$ appearing in \eqref{eq:FTC_1}), it follows that
	\begin{align}\label{eq:FTC_2}
		\begin{aligned}
			\|R^{H^n}-R^{H^\star}\|_{\mathbb{L}^2}^2
			& \leq (\overline C_2)^2  2^{r+8} 5^\frac{p}{2r} \Big(1+\lVert X^{H^{n};o} \rVert_{\mathbb{L}^p}^{2r}+\lVert X^{H^*;o} \rVert_{\mathbb{L}^p}^{2r}+\lVert H^n \rVert_{\mathbb{L}^p}^{2r}+\lVert H^\star \rVert_{\mathbb{L}^p}^{2r}\Big) \\
			&\hspace{7.5em} \times \Big(\big\lVert X^{H^{n};o}-X^{H^{\star};o}\big\rVert_{\mathbb{L}^{\widetilde{p}}}^{2}+\big\lVert H^{n}-H^{\star}\big\rVert_{\mathbb{L}^{\widetilde{p}}}^{2} \Big).
		\end{aligned}
	\end{align}
	Since it holds for every $S\in \mathscr{S}^p(\mathbb{R}^d)$ that $\|S\|_{\mathbb{L}^p}\leq T^{1/p} \|S \|_{\mathscr{S}^p}$, by Lemma \ref{lem:p_integrability}\;(ii) and the fact that $\lvert H_t \rvert \leq K$ $\mathbb{P}\otimes dt$-a.e.\ for every $H\in {\cal A}$ (see Assumption \ref{dfn:control}),
	\begin{align*}
		\sup_{n\in\mathbb{N}}\Big(1+\lVert X^{H^{n};o} \rVert_{\mathbb{L}^p}^{2r}+\lVert X^{H^*;o} \rVert_{\mathbb{L}^p}^{2r}+\lVert H^n \rVert_{\mathbb{L}^p}^{2r}+\lVert H^\star \rVert_{\mathbb{L}^p}^{2r}\Big)<\infty.
	\end{align*}
	
	Moreover, since $\lim_{n\to \infty}\lVert X^{H^{n};o}-X^{H^{\star};o}\rVert_{\mathscr{S}^{\widetilde{p}}}=0$ (see \eqref{eq:sup_conv}) and it holds for every $n\in\mathbb{N}$ that $\lVert X^{H^{n};o}-X^{H^{\star};o}\rVert_{\mathbb{L}^{\widetilde{p}}}\leq T^{1/p} \lVert X^{H^{n};o}-X^{H^{\star};o}\rVert_{\mathscr{S}^{\widetilde{p}}}$,  the term $\lVert X^{H^{n};o}-X^{H^{\star};o}\rVert_{\mathbb{L}^{\widetilde{p}}}^{2}$ appearing in \eqref{eq:FTC_2} vanishes as $n\to \infty$. Moreover, by Lemma \ref{lem:stability}, the other term $\lVert H^n-H^\star\lVert_{\mathbb{L}^{\widetilde p}}$ therein vanishes as well.  Hence, we conclude that $\lim_{n\to \infty}\|R^{H^n}-R^{H^\star}\|_{\mathbb{L}^2}=0$.
	

	\vspace{0.5em}
	\noindent
	The proof of part (ii) follows from the same arguments as those used in the proof of (i), and we only sketch it. 
    Note that by Lemma \ref{lem:BSDE}, for every $i=1,\dots,d$,
	\[
	\lVert \mathcal{Y}^{n,i} -  \mathcal{Y}^{\star,i} \rVert_{\mathscr{S}^2} + \lVert  \mathcal{Z}^{n,i} -  \mathcal{Z}^{\star,i} \rVert_{\mathscr{H}^2}+\lVert  \mathcal{L}^{n,i} -  \mathcal{L}^{\star,i} \rVert_{\mathscr{M}^2} 
	\leq C_{\operatorname{ap}}\big\|B^{H^n,i}- B^{H^\star,i} \big\|_{L^2}^2 
	\]
	and  that for every $\omega\in \Omega$ and $x,x^*,y\in \mathbb{R}$, 
	\[
	\big|\partial_{y}f(\omega,x,y)-\partial_{y}f(\omega,x^*,y)\big|^2\leq (\overline C_2)^2 2^{r+4}(1 + \rvert x\rvert^{2r} +  \lvert x^*\rvert^{2r}  +\lvert y \rvert^{2r}  )\big\lvert x-x^*\big\rvert^2.
	\]
	Moreover, since $\lvert \partial_{s_i} h({\cdot}) \rvert \leq C_h$ for every $i=1,\dots,d$, H\"older's inequality shows that
	\begin{align*}
		&
		\big\|B^{H^n,i}- B^{H^\star,i} \big\|_{L^2}^2\\
			&\quad \leq (C_l \overline C_2)^2 2^{r+4} 4^\frac{p}{2r} \Big(1+\lVert X_T^{H^{n};o} \rVert_{L^p}^{2r}+\lVert X_T^{H^*;o} \rVert_{L^p}^{2r}+\lVert l(S_T^o) \rVert_{L^p}^{2r}\Big) \big\lVert X_T^{H^{n};o}-X_T^{H^{\star};o}\big\rVert_{L^{\widetilde{p}}}^{2},
		\end{align*}
	 where $\widetilde{p}=\frac{2p}{p-2r}>1$.
	The claim follows from \eqref{eq:estimate_integ} and since $\lim_{n\rightarrow \infty}\| X_T^{H^{n};o}-X_T^{H^{\star};o} \|_{L^{\widetilde{p}}}= 0$, as was shown in \eqref{eq:sup_conv}. This completes the proof.
\end{proof}

\subsection{Proof of Proposition \ref{pro:main0} \& first order optimality}\label{sec:proof:pro:main0}
\begin{proof}[Proof of Proposition \ref{pro:main0}]  
   We start by proving the statement (i).
   Let $(H^n)_{n\in \mathbb{N}} \subseteq {\cal A}$ be a sequence such that 
	\[
		V(0)=\lim_{n\rightarrow  \infty} \mathbb{E}\bigg[\int_0^Tg\big(t,X_t^{H^n;o},H_t^n\big)dt+ f \big(X_T^{H^n;o},l(S_T^o)\big)\bigg].
	\]
	
	Note that $\mathbb{L}^2(\mathbb{R}^d)$ defined in Section \ref{sec:preliminary} is a reflexive Banach space. Furthermore, since the sequence $(H^n)_{n\in \mathbb{N}} \subseteq {\cal A}$ is $\mathbb{F}$-predictable and ${\cal K}$-valued $\mathbb{P}\otimes dt$-a.e. where the correspondence ${\cal K}$ has uniformly bounded values in $\mathbb{R}^d$ (by Assumption \ref{dfn:control}), the sequence is  bounded  in $\mathbb{L}^2(\mathbb{R}^d)$. Hence, \cite[Theorem 15.1.2,\;p.320]{DelbaenSchacher06} asserts that there are $H^*\in \mathbb{L}^2(\mathbb{R}^d)$ and
	\begin{align}\label{eq:Komlos0}
		\widetilde{H}^n \in \operatorname{conv}(H^{n},H^{n+1},\dots),\;\; \mbox{$n\in\mathbb{N}$,}
	\end{align}
	which satisfy
	\begin{align}\label{eq:Komlos}
		\lVert \widetilde{H}^n-H^*\rVert_{\mathbb{L}^2}\rightarrow 0\quad \mbox{as $n\rightarrow \infty$}.
	\end{align}
	Note that as $(\widetilde{H}^{n})_{n\in \mathbb{N}}$ is ${\cal K}$-valued $\mathbb{P}\otimes dt$-a.e.\;by  \eqref{eq:Komlos0}, $H^*$ is also ${\cal K}$-valued $\mathbb{P}\otimes dt$-a.e.\;as well, thus $H^*\in {\cal A}$. Moreover, from \eqref{eq:Komlos}  it follows that  
	\begin{align}\label{eq:Komlos_1}
		\widetilde{H}^n\to H^*\quad \mbox{$\mathbb{P}\otimes dt$-a.e.}\quad\mbox{as $n\to \infty$}.
	\end{align}
	
	It remains to show that $H^*$ is an optimizer. 
	Since $S^o$ is an It\^o $(\mathbb{F},\mathbb{P})$-semimartingale satisfying \eqref{eq:posterior_semi_ito} and $(\widetilde{H}^n)_{n\in\mathbb{M}}$ is ${\cal K}$-valued $\mathbb{P}\otimes dt$-a.e.\ and satisfies \eqref{eq:Komlos},  there is a subsequence $(\widetilde{H}^{n})_{n\in\mathbb{N}}$ of the one in \eqref{eq:Komlos0} (for notational simplicity, we do not relabel that sequence) for which
	\begin{align}\label{eq:conv_integral}
		(\widetilde{H}^{n} \cdot S^o)_T  \rightarrow (H^*\cdot S^o)_T\quad \mbox{$\mathbb{P}$-a.s. as $n \rightarrow \infty$}.
	\end{align}
	
	Since $|X_T^{\widetilde H^n;o}-X_T^{H^*;o}|\to 0$ $\mathbb{P}$-a.s. as $n \rightarrow \infty$ (by \eqref{eq:conv_integral}) and the limit \eqref{eq:Komlos} holds, we can use the same arguments presented for \eqref{eq:sup_conv} (see Step 1a of part\;(i) in the proof of Lemma~\ref{lem:stability_1}) to have that $\|X^{\widetilde H^n;o}- X^{H^*;o} \|_{\mathscr{S}^{\beta}}\to 0$ as $n \rightarrow \infty$ (with $\beta=\frac{2p}{p+2}\in(1,p)$ and $\alpha=\alpha(\beta)=2$), which ensures that
	\begin{align}\label{eq:conv_integral_strong}
		X^{\widetilde H^n;o}\to X^{H^*;o}\quad \mbox{$\mathbb{P}\otimes dt$-a.e.}\quad\mbox{as $n\to \infty$}.
	\end{align}

	Furthermore, $g$ is convex in $(x,h)$, and so is $f$ in $x$ (see Assumption \ref{as:objective}\;(iv)). Therefore, $(\widetilde{H}^{n})_{n\in\mathbb{N}}$ is still a minimizing sequence, i.e., 
	\begin{align}\label{eq:still_minimize}
		V(0)= \lim_{n\rightarrow  \infty} \mathbb{E}\bigg[\int_0^Tg\big(t,X_t^{\widetilde H^n;o},\widetilde H_t^n\big)dt+ f \big(X_T^{\widetilde{H}^{n};o},l(S_T^o)\big)\bigg].
	\end{align}

	Finally, since both $g(\omega,t,\cdot , \cdot)$ and $f(\omega,\cdot,\cdot)$ are continuous and bounded from below 	for every $(\omega,t)\in\Omega \times [0,T]$ 	 (see  Assumption \ref{as:objective}), 
	a twofold application of Fatou's lemma (together with \eqref{eq:Komlos_1}, \eqref{eq:conv_integral_strong} and \eqref{eq:still_minimize}) shows that 
	\begin{align*}
		\begin{aligned}
			&\mathbb{E}\bigg[\int_0^Tg(t,X_t^{H^*;o},H_t^*)dt+f \big(X_T^{H^*;o},h(S_T^o)\big)\bigg]\\
			&\quad =\mathbb{E}\bigg[\int_0^T\Big(\liminf_{n\to \infty} g(t,X_t^{\widetilde H^n;o},\widetilde H_t^n)\Big)dt + \liminf_{n\rightarrow  \infty}  f \big(X_T^{\widetilde{H}^{n};o},l(S_T^o)\big)\bigg]\leq V(0).
		\end{aligned}
	\end{align*}
	This ensures the optimality of $H^*\in{\cal A}$. 
	The uniqueness of an optimizer follows immediately from both the convexity of $g$ in $(x,h)$ and the `strict' convexity of~$f$ in $x$; see Assumption~\ref{as:objective}\;(iv).  
	
	The claims made in part (ii) and (iii) follow immediately from Proposition \ref{pro:BSDE}. 
\end{proof}

We wrap up this section by establishing a first order optimality condition for the unique optimizer $H^*\in {\cal A}$, which is employed in the proof of Theorem \ref{thm:main}.

\begin{lem}\label{lem:FOC} Suppose that Assumptions \ref{as:posterior_refer}, \ref{dfn:control}, and \ref{as:objective} are satisfied. 
Then the unique optimizer $H^*$ of $V(0)={\cal V}(H^*,0)$ (in Proposition \ref{pro:main0}\;(i)) satisfies the first-order optimality condition: for every $H\in {\cal A}$,
	\begin{align*}
		\mathbb{E}\Bigg[\int_0^T(\nabla_{x,h}g(t,X_t^{H^*;o},H^*_t))^\top
		\begin{pmatrix}
			X_t^{H;o}-X_t^{H^*;o}\\
			H_t-H_t^*
		\end{pmatrix} dt 
		+\partial_{x}f\big(X_T^{H^*;o},h(S_T^o)\big)\big(X_T^{H;o}-X_T^{H^*;o}\big)\Bigg]\geq 0.
	\end{align*}
\end{lem}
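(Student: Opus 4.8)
The plan is to exploit the convexity of the functional $H\mapsto{\cal V}(H,0)$ on the convex set ${\cal A}$ and to compute a one-sided directional derivative at the optimizer $H^*$. Fix $H\in{\cal A}$; for $\varepsilon\in(0,1]$ set $H^\varepsilon:=(1-\varepsilon)H^*+\varepsilon H$, which again belongs to ${\cal A}$ since ${\cal K}(\omega,t)$ is convex (Assumption \ref{dfn:control}). Because $H\mapsto X^{H;o}$ is affine (indeed $X^{H;o}=x_0+(H\cdot S^o)$ is linear in $H$ up to the constant $x_0$), one has $X_t^{H^\varepsilon;o}=X_t^{H^*;o}+\varepsilon\big(X_t^{H;o}-X_t^{H^*;o}\big)$ for all $t\in[0,T]$. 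By optimality of $H^*$, and since ${\cal V}(\cdot,0)$ is finite on ${\cal A}$ (by the growth bounds of Remark \ref{rem:objective} and Lemma \ref{lem:p_integrability}),
\[
\frac{{\cal V}(H^\varepsilon,0)-{\cal V}(H^*,0)}{\varepsilon}\geq 0\qquad\text{for every }\varepsilon\in(0,1],
\]
so it is enough to show that this difference quotient converges, as $\varepsilon\downarrow 0$, to the expression appearing in the statement.

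Writing the difference quotient out,
\[
\frac{{\cal V}(H^\varepsilon,0)-{\cal V}(H^*,0)}{\varepsilon}
=\mathbb{E}\bigg[\int_0^T\frac{g(t,X_t^{H^\varepsilon;o},H_t^\varepsilon)-g(t,X_t^{H^*;o},H_t^*)}{\varepsilon}\,dt
+\frac{f(X_T^{H^\varepsilon;o},l(S_T^o))-f(X_T^{H^*;o},l(S_T^o))}{\varepsilon}\bigg].
\]
For fixed $(\omega,t)$ the chain rule applied to $g(\omega,t,\cdot,\cdot)\in C^2$ and to $x\mapsto f(\omega,x,l(S_T^o))\in C^2$ gives the pointwise limits
\[
\frac{g(t,X_t^{H^\varepsilon;o},H_t^\varepsilon)-g(t,X_t^{H^*;o},H_t^*)}{\varepsilon}\ \xrightarrow[\varepsilon\downarrow0]{}\
\big(\nabla_{x,h}g(t,X_t^{H^*;o},H_t^*)\big)^\top\begin{pmatrix}X_t^{H;o}-X_t^{H^*;o}\\ H_t-H_t^*\end{pmatrix}
\]
and $\varepsilon^{-1}\big(f(X_T^{H^\varepsilon;o},l(S_T^o))-f(X_T^{H^*;o},l(S_T^o))\big)\to\partial_xf(X_T^{H^*;o},l(S_T^o))\big(X_T^{H;o}-X_T^{H^*;o}\big)$.

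The main (and essentially only) obstacle is justifying the interchange of limit and expectation. For this I would use that for a convex function $\phi$ the map $\varepsilon\mapsto\varepsilon^{-1}\big(\phi(a+\varepsilon v)-\phi(a)\big)$ is nondecreasing on $(0,\infty)$, so for $\varepsilon\in(0,1]$ it is squeezed between its limit $\nabla\phi(a)^\top v$ and its value at $\varepsilon=1$, namely $\phi(a+v)-\phi(a)$. Applying this with $\phi=g(\omega,t,\cdot,\cdot)$, convex by Assumption \ref{as:objective}\,(iv), and with $\phi=f(\omega,\cdot,l(S_T^o))$, convex in $x$, the two integrands above are dominated, uniformly in $\varepsilon\in(0,1]$, by the sum of the absolute values of these bounds. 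Each such bound has finite expectation: the polynomial growth estimates of Remark \ref{rem:objective}\,(i), the uniform bound $\lvert H\rvert,\lvert H^*\rvert\leq K$, the a priori estimates $X^{H;o},X^{H^*;o}\in\mathscr{S}^p$ and $S^o\in\mathscr{S}^p$ (Lemma \ref{lem:p_integrability} and Remark \ref{rem:p_char}), the linear growth of $l$, and Hölder's inequality together bound all the relevant moments, using only $r+2<p$ (which holds since $r<p-3$ by Assumption \ref{as:objective}\,(ii)). Hence dominated convergence — or, since the quotients are monotone in $\varepsilon$ and bounded below by an integrable function, monotone convergence — allows passing to the limit, so that the limit of the expectations equals the expectation of the pointwise limits, which is precisely the left-hand side of the claimed inequality. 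Combined with the displayed nonnegativity, this completes the proof.
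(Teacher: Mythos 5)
Your proposal is correct and follows the same overall structure as the paper's proof: perturb $H^*$ along the convex direction towards $H$, form the difference quotient, and exchange the $\varepsilon\downarrow 0$ limit with the expectation. Where you differ is in how you justify the interchange. The paper constructs a dominating function
\[
\Psi^{H;g}_t:=\sup_{\theta\in(0,1)}\Big\lvert\tfrac{1}{\theta}\big(g(t,X_t^{H^*;o}+\theta\Delta X_t^{H;o},H_t^*+\theta\Delta H_t)-g(t,X_t^{H^*;o},H_t^*)\big)\Big\rvert
\]
(and similarly $\Psi^{H;f}$) and bounds it in $\mathbb{L}^1$ via a \emph{second-order} Taylor expansion, invoking the polynomial growth of $D^2_{x,h}g$ and $D^2_{x,y}f$ from Assumption~\ref{as:objective}\,(ii). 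You instead exploit convexity: for convex $\phi$ the map $\varepsilon\mapsto\varepsilon^{-1}(\phi(a+\varepsilon v)-\phi(a))$ is nondecreasing, so the difference quotient is squeezed pointwise between $\nabla\phi(a)^\top v$ and $\phi(a+v)-\phi(a)$, both of which are integrable by Remark~\ref{rem:objective}\,(i), Lemma~\ref{lem:p_integrability}, and $\lvert H\rvert,\lvert H^*\rvert\le K$. This sidesteps the Hessian bounds and only uses the $C^1$ regularity and growth of $g,f$ together with convexity, making the domination slightly more elementary. Both approaches are valid given the standing hypotheses; the paper's Taylor-based estimate is the same technique it reuses elsewhere (Lemma~\ref{lem:up_bound}), which is presumably why it proceeds that way, whereas your monotone-quotient argument is a cleaner way to obtain precisely this first-order condition. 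One small point to tighten: the integrability of $\nabla\phi(a)^\top v$ requires a Hölder pairing with exponent $p/(r+1)$, so you are implicitly using $r+1<p$ as well as $r+2\le p$; both follow from $r<p-3$, so the proof is not affected, but the parenthetical ``using only $r+2<p$'' slightly undersells what is being used.
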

\begin{proof}
	Fix $H\in\mathcal{A}$.
	Clearly $H^*+ \theta (H-H^*) \in {\cal A}$ for any $0<\theta<1$, hence  it follows from the optimality of $H^*$ that 
	\begin{align}
		\begin{aligned}
	\label{eq:optimality}
		&\mathbb{E}\Bigg[\int_0^T\frac{1}{\theta}\Big(g\big(t,X_t^{H^*;o}+ \theta \cdot \Delta X_t^{H;o},H_t^*+\theta \cdot \Delta H_t \big)- g\big(t,X_t^{H^*;o},H_t^*\big) \Big) dt\\		
		&\hspace{1.5em} +\frac{1}{\theta}\Big(f \big(X_T^{H^*;o}+ \theta \cdot \Delta X_t^{H;o},l(S_T^o) \big)-f \big(X_T^{H^*;o},l(S_T^o)\big)\Big) \Bigg]\geq 0,
		\end{aligned}
	\end{align}
	where $\Delta X_t^{H;o}:=X_t^{H;o}-X_t^{H^*;o}$ and $\Delta H_t:= H_t-H_t^*$ for $t\in[0,T]$.

	Define for every $t\in[0,T]$ by 
	\begin{align*}
		\begin{aligned}
			\Psi^{H;g}_t&:= \sup_{\theta\in(0,1)}\Big\lvert\frac{1}{\theta}\Big(g \big(t,X_t^{H^*;o} + \theta \cdot \Delta X_t^{H;o},H_t^*+\theta \cdot  \Delta H_t \big)-g(t,X_t^{H^*;o},H_t^*)\Big)\Big\rvert\\
			\Psi^{H;f}&:= \sup_{\theta\in(0,1)}\Big\lvert\frac{1}{\theta}\Big(f \big(X_T^{H^*;o} + \theta \cdot  \Delta H_t,l(S_T^o) \big)-f \big(X_T^{H^*;o},l(S_T^o) \big)\Big)\Big\rvert.
		\end{aligned}
	\end{align*}

	Then we claim that $(\Psi^{H;g}_t)_{t\in [0,T]}\in \mathbb{L}^1(\mathbb{R})$ and $\Psi^{H;f}\in L^1({\cal F}_T;\mathbb{R})$. 
	If that is the case, then the proof follows from 	\eqref{eq:optimality} and the dominated convergence theorem.
	
	\vspace{0.5em}
	\noindent {\it Step 1. estimate $\|\Psi^{H;g}\|_{\mathbb{L}^1}<\infty$ }: First note that by Assumption \ref{as:objective}\;(ii) and Remark \ref{rem:objective}\;(i) (and the power triangle inequality),
	\begin{align*}
		\big\|D^2_{x,h}g(\omega,t,x+\hat x,h+\hat h) \big\|_{\operatorname{F}} &\leq \overline C_{2} 2^{\frac{3r}{2}}(1+\lvert x|^{r}+\lvert h|^{r}+\lvert \hat x|^{r}+|\hat h |^{r}),\\
		\big|\nabla_{x,h}g(\omega,t,x,h)\big|&\leq \widetilde C (1+\lvert x|^{r+1}+\lvert h|^{r+1})
	\end{align*}
	for every $(\omega,t,x,h)\in \Omega\times[0,T]\times \mathbb{R}\times \mathbb{R}^d$ and $(\hat x,\hat h)\in \mathbb{R}\times \mathbb{R}^d$. Hence, a second-order Taylor expansion of $g$ (with the power triangle inequality and the fact that $\lvert H_t \rvert \leq K$ $\mathbb{P}\otimes dt$-a.e.\ for every $H\in {\cal A}$; see Assumption \ref{dfn:control}) implies that  for any $H\in {\cal A}$, 
	\begin{align*}
		\begin{aligned}
			\big\|\Psi^{H;g}\big\|_{\mathbb{L}^1} &\leq  \widetilde C2^{\frac{1}{2}}  \mathbb{E}\bigg[\int_0^T \Big(1+\lvert X_t^{H^*;o} \rvert^{r+1} + K^{r+1} \Big)\Big(\big|\Delta X_t^{H;o} \big|+2K\Big)dt \bigg]\\
			&\quad + \overline C_{2} 2^{\frac{3r}{2}} \mathbb{E}\bigg[\int_0^T \Big(1+\lvert X_t^{H^*;o} \rvert^{r} + \lvert X_t^{H;o} \rvert^{r} + 2\cdot K^{r} \Big)\Big(\big|\Delta X_t^{H;o} \big|^2+(2K)^2\Big) dt \bigg]\\
			&=: \widetilde C2^{\frac{1}{2}} \cdot  \operatorname{I}^{H;g} + \overline C_{2} 2^{\frac{3r}{2}} \cdot  \operatorname{II}^{H;g}.
		\end{aligned}
	\end{align*}
	Since $0<  r <  \min\{\frac{p-2}{2}, p-3\}$ (see Assumptions \ref{as:posterior_refer}\;(i) and \ref{as:objective}\;(ii)), H\"older's inequality (with exponent $\frac{p}{r+1}>1$ and conjugate exponent $\frac{p}{p-(r+1)}>1$) ensures that  
	\begin{align*}
		\operatorname{I}^{H;g}
		\leq   \Big\|1+\lvert X^{H^*;o} \rvert^{r+1}+K^{r+1}\Big\|_{\mathbb{L}^{\frac{p}{r+1}}}  \Big( \big\lVert \Delta X^{H;o}  \big\rVert_{\mathbb{L}^{\frac{p}{p-(r+1)}}}+ 2KT \Big).
	\end{align*}
	
	Furthermore, by Jensen's inequality (noting that $x\rightarrow |x|^{p-(r+1)}$ is convex since $p>r+2$), 
	\[
		\lVert \Delta X^{H;o}  \rVert_{\mathbb{L}^{\frac{p}{p-(r+1)}}}\leq \lVert \Delta X^{H;o} \rVert_{\mathbb{L}^{p}} 
		\leq T^{\frac{1}{p}} \lVert \Delta X^{H;o} \rVert_{\mathscr{S}^{p}},
	\]
	and by the power triangle inequality, 
	\[
	\mathbb{E}\bigg[\int_0^T\left(1+\lvert X_t^{H^*;o} \rvert^{r+1}+K^{r+1}\right)^{\frac{p}{r+1}}dt \bigg]\leq 3^{\frac{p}{r+1}}T \Big( 1+\big\|X^{H^*;o}\big\|_{\mathscr{S}^p}^p +K \Big).
	\]
	Hence, the a priori estimate on $X^{H;b,\sigma}$ (for any $(b,\sigma)\in\mathcal{B}^1$ and $H\in {\cal A}$) detailed in Lemma~\ref{lem:p_integrability}\;(ii) ensures that $\operatorname{I}^{H;g}<\infty$, as claimed.
	
	Similarly, we can deduce that 
	\begin{align*}
		\operatorname{II}^{H;g}\leq  4 T^{\frac{r}{p}} \Big(1+ \big\|X^{H^*;o}\big\|_{\mathscr{S}^p}^p + \big\|X^{H;o}\big\|_{\mathscr{S}^p}^p +(2K)^p\Big)^{\frac{r}{p}}  \big(T^{\frac{2}{p}} \lVert \Delta X^{H;o} \rVert_{\mathscr{S}^{p}}^2+(2KT)^2\big)<\infty,
	\end{align*}
	thus $\Psi^{H;g,2}\in \mathbb{L}^1(\mathbb{R})$, as claimed.
	
	\vspace{0.5em}
	\noindent {\it Step 2. estimate $\|\Psi_T^{H;f}\|_{{L}^1}<\infty$ }: Similarly, by  Assumption \ref{as:objective}\;(ii) and Remark \ref{rem:objective}\;(i) (and the power triangle inequality), for every $\omega\in\Omega$ and $x,\hat {x},y \in \mathbb{R}$
	\begin{align*}
	\lvert \partial_{xx}f(\omega,x+\hat x,y)\rvert 
	&\leq \overline C_2 2^{\frac{r}{2}} \left(1+\lvert x \rvert^r+\lvert \hat x \rvert^r+\lvert y \rvert^r\right), \quad 
	\lvert \partial_{x}f(\omega,x,y) \rvert 
	\leq  \widetilde{C} \left(1  +\lvert {x} \rvert^{r+1} +\lvert y \rvert^{r+1} \right).
	\end{align*}
	Hence, a second-order Taylor expansion of $f$ implies that  for any $H\in {\cal A}$, 
	\begin{align}\label{eq:L_1_psi}
		\begin{aligned}
			\big\|\Psi^{H;f}\big\|_{L^1} &\leq \widetilde{C}  \mathbb{E}\Big[\big(1+\lvert X_T^{H^*;o} \rvert^{r+1}+\lvert l(S_T^o) \rvert^{r+1}\big) \big\lvert \Delta X^{H;o}_T \big\rvert \Big]\\
			&\quad+ \overline {C}_2 2^{\frac{r}{2}} \mathbb{E}\Big[\big(1+\lvert X_T^{H;o} \rvert^{r}+\lvert X_T^{H^*;o} \rvert^{r}+\lvert l(S_T^o) \rvert^{r}\big) \big\lvert \Delta X^{H;o}_T \big\rvert^2 \Big].\
		\end{aligned}
	\end{align}
	
	Since $l(S_T^o)$ appearing in \eqref{eq:L_1_psi} satisfies the a priori estimate given in Lemma~\ref{lem:p_integrability}\;(ii) with the same exponent of the a priori estimate on $X_T^{H^*;o}$ and $X_T^{H;o}$, the claim that $\|\Psi^{H;f}\|_{L^1}<\infty$  follows from the same arguments as those used in the proof of Step 1, and we omit the proof.
\end{proof}

\subsection{Proof of Theorem \ref{thm:main}}
\label{sec:proof:main_ii}

Following the outline of the proof at the beginning of this section, we first establish the crucial observation that the expectation of the first order derivative can be expressed in a way that is {\it linear} in $b-b^o$ and $\sigma-\sigma^o$, i.e.\ \eqref{eq:linear}.
To that end, recall that $\langle X,Y \rangle_{\mathbb{P}\otimes dt}=\mathbb{E}[\int_0^T \langle X_t,Y_t\rangle dt]$ and similarly for	$\langle\cdot,\cdot \rangle_{\mathbb{P}\otimes dt,\operatorname{F}}$, see Section \ref{sec:preliminary}.

\begin{lem}\label{lem:inner_prod}
	Suppose that Assumptions \ref{as:posterior_refer}, \ref{dfn:control}, and \ref{as:objective} are satisfied.
	Let $H\in {\cal A}$, let $(A^H,(R_t^H)_{t\in[0,T]})$ and $B^H$ be given in \eqref{eq:terminal}, and let $(Y^{H},Z^{H},L^{H})$ and $({\mathcal{Y}}^H,\mathcal{Z}^H,\mathcal{L}^H)$ be the unique solution of the BSDEs given in \eqref{eq:BSDE_univ1} and \eqref{eq:BSDE_univ3}, respectively.   
	Then the following holds: for every $\varepsilon\in[0,1]$ and $(b,\sigma)\in \mathcal{B}^\varepsilon$,
	\begin{align}\label{eq:inner_prod}\quad
		\begin{aligned}
			&\mathbb{E}\bigg[\int_0^TR^H_t\big(X_t^{H;b,\sigma} - X_t^{H;o}\big)dt + A^H \big(X_T^{H;b,\sigma} - X_T^{H;o}\big)+(B^H)^\top \big(S_T^{b,\sigma} - S_T^{o}\big) \bigg]\\
			&\quad = \langle Y^H H+{{\cal Y}}^H, b-b^o \rangle_{\mathbb{P}\otimes dt}+ \langle Z^HH^\top+{{\cal Z}}^H, \sigma-\sigma^o \rangle_{ \mathbb{P}\otimes dt,\operatorname{F}}.
		\end{aligned}
	\end{align}
\end{lem}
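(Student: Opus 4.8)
The plan is to prove \eqref{eq:inner_prod} by two applications of It\^o's product rule (integration by parts): one pairing the scalar BSDE-solution $Y^H$ with the difference of controlled wealth processes, the other pairing the vector-valued solution $\mathcal{Y}^H$ with the difference of stock processes, and then adding the resulting identities. Write $\Delta X:=X^{H;b,\sigma}-X^{H;o}$ and $\Delta S:=S^{b,\sigma}-S^{o}$; from \eqref{eq:control_process} and \eqref{eq:posterior_semi_ito} these satisfy $\Delta X_0=0$, $\Delta S_0=0$, and
\[
d\Delta X_t=H_t^\top(b_t-b_t^o)\,dt+H_t^\top(\sigma_t-\sigma_t^o)\,dW_t,\qquad d\Delta S_t=(b_t-b_t^o)\,dt+(\sigma_t-\sigma_t^o)\,dW_t.
\]

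First I would apply integration by parts to the semimartingale $Y^H\Delta X$. Using the dynamics of $Y^H$ from \eqref{eq:BSDE_univ1} (that is, $dY^H_t=-R^H_t\,dt+(Z^H_t)^\top dW_t+dL^H_t$ with $Y^H_T=A^H$), the facts that $\Delta X_0=0$ and that $L^H$ is strongly orthogonal to every stochastic integral with respect to $W$ (so it does not contribute to $\langle Y^H,\Delta X\rangle$), one obtains
\[
A^H\Delta X_T=\int_0^T Y^H_t\,d\Delta X_t+\int_0^T\Delta X_t\,dY^H_t+\int_0^T(Z^H_t)^\top(\sigma_t-\sigma_t^o)^\top H_t\,dt.
\]
Taking expectations, the three stochastic-integral terms that appear, namely $\int_0^\cdot Y^H_t H_t^\top(\sigma_t-\sigma_t^o)\,dW_t$, $\int_0^\cdot\Delta X_t(Z^H_t)^\top dW_t$ and $\int_0^\cdot\Delta X_t\,dL^H_t$, are true martingales and hence have zero expectation: using $|H_t|\le K$, the BDG and Cauchy--Schwarz inequalities bound their $L^1$-norms by (a constant times) $\|Y^H\|_{\mathscr{S}^2}\|\sigma-\sigma^o\|_{\mathbb{H}^2}$, $\|\Delta X\|_{\mathscr{S}^2}\|Z^H\|_{\mathscr{H}^2}$ and $\|\Delta X\|_{\mathscr{S}^2}\|L^H\|_{\mathscr{M}^2}$ respectively, all finite by Proposition \ref{pro:BSDE}, Lemma \ref{lem:p_integrability}(i) (which gives $\Delta X\in\mathscr{S}^p\subseteq\mathscr{S}^2$) and Assumption \ref{as:posterior_refer}(i) (which gives $\sigma-\sigma^o\in\mathbb{H}^p\subseteq\mathbb{H}^2$). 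Rewriting the scalar $Y^H_tH_t^\top(b_t-b_t^o)$ as $\langle Y^H_tH_t,b_t-b_t^o\rangle$ and the scalar $(Z^H_t)^\top(\sigma_t-\sigma_t^o)^\top H_t$ as the Frobenius pairing of $\sigma_t-\sigma_t^o$ with the rank-one matrix built from $H_t$ and $Z^H_t$, what survives is
\[
\mathbb{E}\big[A^H\Delta X_T\big]+\mathbb{E}\Big[\int_0^T R^H_t\,\Delta X_t\,dt\Big]=\langle Y^HH,\,b-b^o\rangle_{\mathbb{P}\otimes dt}+\langle Z^HH^\top,\,\sigma-\sigma^o\rangle_{\mathbb{P}\otimes dt,\operatorname{F}}.
\]

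Next I would repeat the same argument componentwise for $(\mathcal{Y}^H)^\top\Delta S=\sum_{i=1}^d\mathcal{Y}^{H,i}\Delta S^i$, using \eqref{eq:BSDE_univ3} (so $d\mathcal{Y}^{H,i}_t=(\mathcal{Z}^{H,i}_t)^\top dW_t+d\mathcal{L}^{H,i}_t$, $\mathcal{Y}^{H,i}_T=(B^H)^i$, with no generator term), the orthogonality of $\mathcal{L}^{H,i}$ to stochastic integrals with respect to $W$, and the same BDG/Cauchy--Schwarz estimates together with $\Delta S\in\mathscr{S}^p$ from Lemma \ref{lem:p_integrability}(i) and the bounds of Proposition \ref{pro:BSDE}; this yields $\mathbb{E}[(B^H)^\top\Delta S_T]=\langle\mathcal{Y}^H,b-b^o\rangle_{\mathbb{P}\otimes dt}+\langle\mathcal{Z}^H,\sigma-\sigma^o\rangle_{\mathbb{P}\otimes dt,\operatorname{F}}$. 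Adding the two identities and invoking bilinearity of $\langle\cdot,\cdot\rangle_{\mathbb{P}\otimes dt}$ and $\langle\cdot,\cdot\rangle_{\mathbb{P}\otimes dt,\operatorname{F}}$ gives exactly \eqref{eq:inner_prod}. The only genuinely delicate point is the verification that the local-martingale terms in both integration-by-parts formulas are true martingales; the rest is bookkeeping, the main subtlety there being to keep the transposes straight when identifying covariation densities with Frobenius inner products (harmless for the subsequent sensitivity analysis since the $\mathbb{H}^q$-norm is invariant under transposition).
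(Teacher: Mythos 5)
Your proof is correct and amounts to a more compact route to the same identity. Where the paper expands $\Delta X_t=\int_0^t H^\top(b-b^o)\,ds+\int_0^t H^\top(\sigma-\sigma^o)\,dW$, integrates by parts the deterministic integral against $R^H$, and then computes the four resulting pieces $\Psi^b,\Xi^b,\Psi^\sigma,\Xi^\sigma$ one at a time via Fubini and the conditional-expectation representation $Y_t^H=\mathbb{E}[A^H+\int_t^T R^H_s\,ds\,|\,\mathcal{F}_t]$ plus the It\^o isometry, you instead apply It\^o's product rule directly to $Y^H\Delta X$ (and componentwise to $(\mathcal{Y}^H)^\top\Delta S$), read off the identity from the quadratic covariation $d[Y^H,\Delta X]_t=(Z^H_t)^\top(\sigma_t-\sigma_t^o)^\top H_t\,dt$ (the $L^H$ part contributing nothing since $\Delta X$ is continuous and $L^H$ is strongly orthogonal to every $W$-integral), and verify the three local-martingale integrals are true martingales by the BDG/Cauchy--Schwarz estimates you list. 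These $L^1$-bounds are exactly the ones the paper needs for the same purpose, only organised differently: the paper establishes them term by term to justify Fubini and $\mathbb{E}[M^H_T]=0$, whereas you establish them to justify vanishing of the martingale terms in the product formula. The two proofs therefore rest on identical ingredients --- Proposition~\ref{pro:BSDE}, Lemma~\ref{lem:p_integrability}, strong orthogonality, integrability of $b^o,\sigma^o$, $|H|\le K$ --- and yours is the shorter bookkeeping. One point to present carefully if you write this up: since $L^H$ may jump, the integrand in the first piece is $Y^H_{t-}$ rather than $Y^H_t$, though this is harmless because the integrators are $dt$ and $dW_t$ and $Y^H_{t-}=Y^H_t$ $\mathbb{P}\otimes dt$-a.e.; and you should fix whether the rank-one matrix is $Z^H_t H_t^\top$ or $H_t(Z^H_t)^\top$ --- the covariation density $(Z^H_t)^\top(\sigma_t-\sigma_t^o)^\top H_t$ is literally $\langle H_t(Z^H_t)^\top,\sigma_t-\sigma_t^o\rangle_{\operatorname{F}}$, as you note innocuous for the final $\mathbb{H}^q$-norm since $\|\cdot\|_{\operatorname{F}}$ is transpose-invariant, but it is worth being explicit to match the paper's stated convention.
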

\begin{proof} 
	{\it Step 1.}  We start by focusing on the first and second terms in \eqref{eq:inner_prod} and show that
	\begin{align}\label{eq:inner_x}
		\begin{aligned}
		&\mathbb{E}\bigg[\int_0^TR^H_t\big(X_t^{H;b,\sigma} - X_t^{H;o}\big)dt + A^H \big(X_T^{H;b,\sigma} - X_T^{H;o}\big) \bigg]\\
		&\quad = \langle Y^H H, b-b^o \rangle_{\mathbb{P}\otimes dt}+ \langle Z^HH^\top, \sigma-\sigma^o \rangle_{ \mathbb{P}\otimes dt,\operatorname{F}}.
		\end{aligned}
	\end{align}
	To that end, denote for every $t\in[0,T]$ by $\mathfrak{I}_t^H:=\int_t^TR_s^H ds$. Then from the integration by~parts, 
	\begin{align*}
		\begin{aligned}
			\int_0^TR^H_t\big(X_t^{H;b,\sigma} - X_t^{H;o}\big)dt= \int_0^T\mathfrak{I}_t^H H_t^\top (b_t- b_t^o)dt + \int_0^T\mathfrak{I}_t^HH_t^\top (\sigma_t- \sigma_t^o)dW_t.
		\end{aligned}
	\end{align*}
	Moreover, we denote by 
	\begin{align*}
		\begin{aligned}
		&\mbox{$\Psi^{b} :=\int_0^T \mathfrak{I}_t^HH_t^\top (b_t- b_t^o)dt$},\;\; &&\mbox{$\Psi^{\sigma}:=\int_0^T \mathfrak{I}_t^HH_t^\top (\sigma_t- \sigma_t^o)dW_t$}, \\
		&\mbox{$\Xi^{b} :=A^H \int_0^T H_t^\top (b_t- b_t^o)dt$},\;\; &&\mbox{$\Xi^{\sigma}:=A^H \int_0^T H_t^\top (\sigma_t- \sigma_t^o)dW_t$},
		\end{aligned}
	\end{align*}
	so that $\mathbb{E}[\int_0^TR^H_t(X_t^{H;b,\sigma} - X_t^{H;o})dt + A^H (X_T^{H;b,\sigma} - X_T^{H;o}) ] = \mathbb{E}[\Psi^{b}+\Psi^{\sigma}+\Xi^{b}+\Xi^{\sigma}]$.   
	
	\vspace{0.5em}
	\noindent {\it Step 1a:} We claim that $\mathbb{E}[\Psi^{b}+\Xi^{b} ]= \langle Y^H H, b-b^o \rangle_{\mathbb{P}\otimes dt}$. First note that
	$\mathbb{E}[\lvert\Psi^{b}\rvert]$, $\mathbb{E}[\lvert\Xi^{b}\rvert]<\infty$. 
	Indeed, by H\"older's inequality (with exponent $p>3$) and the fact that  $\lvert H_t\rvert \leq K$ $\mathbb{P}\otimes dt$-a.e.,
	\begin{align*}
		\mathbb{E}\big[\lvert\Psi^{b}\rvert\big] &\leq \| \mathfrak{I}^H \|_{\mathbb{L}^q} \| H^\top(b-b^o)\|_{\mathbb{L}^p}\leq K \| \mathfrak{I}^H \|_{\mathbb{L}^q} \| b-b^o\|_{\mathbb{L}^p}.
	\end{align*}
	Furthermore, by Jensen's inequality (noting that $x\to |x|^{\frac{2}{q}}$ is convex as~$q<2$) 
	\begin{align}
		\begin{aligned}\label{eq:Integ_I^H}
		\| \mathfrak{I}^H \|_{\mathbb{L}^q}^q&\leq T\cdot \mathbb{E}\Bigg[\sup_{t\in[0,T]}|\mathfrak{I}_t^H|^q\Bigg]=T\cdot \mathbb{E}\bigg[\sup_{t\in[0,T]}\bigg|\int_t^TR_s^Hds\bigg|^q\bigg]\\
		& \leq T\cdot \mathbb{E}\Bigg[\bigg(\int_0^T|R_s^H|ds\bigg)^q\Bigg] \leq T^{\frac{q}{2}+1}\cdot \mathbb{E}\Bigg[\bigg(\int_0^T|R_s^H|^2ds\bigg)^{\frac{q}{2}}\Bigg] \leq T^{\frac{q}{2}+1} \|R^H \|_{\mathbb{L}^2}^{{q}}
		\end{aligned}
	\end{align}
	Combining this with $R^H\in \mathbb{L}^2(\mathbb{R})$ (see Proposition \ref{pro:BSDE}\;(i)) and $\lVert b - b^o\rVert_{\mathbb{L}^p} \leq \gamma \varepsilon\leq 1$, we conclude that $\mathbb{E}[\lvert\Psi^{b}\rvert]<\infty.$ 
	
	Similarly, by H\"older's inequality (with exponent $p>3$)
	\begin{align*}
		\mathbb{E}\big[\lvert\Xi^{b}\rvert\big]
		&\leq \big\lVert A^H \big\rVert_{L^{q}}\Big\lVert  \int_0^T H_t^\top (b_t- b_t^o)dt \Big\rVert_{L^{p}}
		\leq K  \big\lVert A^H \big\rVert_{L^{q}}T^{1-\frac{1}{p}}  \lVert b-b^o \rVert_{\mathbb{L}^{p}}.
	\end{align*}
	Moreover, since $\lvert \partial_x f(\omega,x,y) \rvert \leq \widetilde{C} (1 +\lvert x \rvert^{r+1}+\lvert y \rvert^{r+1})$ for every $\omega\in \Omega$ and $x,y\in \mathbb{R}$ (see Remark~\ref{rem:objective}\;(i)), H\"older's inequality with exponent $\frac{p-1}{(r+1)}>1$ 
	ensures that 
	\begin{align*}
		\begin{aligned}
	\mathbb{E}\big[ \big| A^H \big|^q \big]
	&\leq   3^q \widetilde{C}^q \Big(1+ \mathbb{E}\left[ |X_T^{H;o}|^{q(r+1)} \right] + \mathbb{E}\left[ |l(S_T^o)|^{q(r+1)} \right] \Big)\\
	&\leq 3^q \widetilde{C}^q \Big(1+ \mathbb{E}\left[ |X_T^{H;o}|^{p} \right]^{\frac{r+1}{p-1}}  + \mathbb{E}\Big[ |l(S_T^o)|^{p} \Big]^{\frac{r+1}{p-1}} \Big).
	\end{aligned}
	\end{align*}
	Combining this with Lemma \ref{lem:p_integrability}\;(ii) and $\lVert b - b^o\rVert_{\mathbb{L}^p} \leq  1$, we conclude that $\mathbb{E}[\lvert\Xi^{b}\rvert]<\infty.$ 
	
	Therefore, by Fubini's theorem and since $Y_t^H= \mathbb{E}[A^H+\mathfrak{I}_t^H| {\cal F}_t]$ for $t\in[0,T]$ (see Proposition~\ref{pro:BSDE}\;(ii) and recall that $\mathfrak{I}_t^H=\int_t^TR^H_sds$), 
	\begin{align}\label{eq:inner_b}
		\mathbb{E}\left[\Psi^{b}+\Xi^{b}\right]=  \int_0^T \mathbb{E}\Big[\mathbb{E}\big[A^H+\mathfrak{I}_t^H\big| {\cal F}_t\big] H_t^\top (b_t- b_t^o)\Big]dt=\langle Y^H H, b-b^o \rangle_{\mathbb{P}\otimes dt}.
	\end{align}
	
	\vspace{0.5em}
	\noindent {\it Step 1b:} Next, we claim that $\mathbb{E}[\Psi^{\sigma}+\Xi^{\sigma}]= \langle Z^HH^\top, \sigma-\sigma^o \rangle_{ \mathbb{P}\otimes dt,\operatorname{F}}.$
	Denote for $t\in[0,T]$~by 
	\[
		\qquad M_t^H:=\int_0^t \underline {\mathfrak{I}}_s^H H_s^\top (\sigma_s- \sigma_s^o)dW_s,\quad\mbox{where}\;\;\;\underline {\mathfrak{I}}_s^H:= {\mathfrak{I}}^H_0-{\mathfrak{I}}^H_s=\int^s_0R^H_udu, 
	\]
	so that 
	$\mathbb{E}[\Psi^{\sigma}+\Xi^{\sigma}]= \mathbb{E}[(A^H+{\mathfrak{I}}^H_0)\int_0^T H_t^\top (\sigma_t- \sigma_t^o)dW_t - M_T^H].$ 
	
	Moreover, by Proposition~\ref{pro:BSDE}\;(ii) (noting again that $A^H+{\mathfrak{I}}^H_0=A^H+\int_0^TR_t^Hdt$), 
	\begin{align}
		\nonumber
			\mathbb{E}\left[\Psi^{\sigma}+\Xi^{\sigma}\right]&=  \mathbb{E} \left[\int_0^T (Z_t^H)^\top dW_t \int_0^T H_t^\top (\sigma_t- \sigma_t^o)dW_t+  (L_T^H-L_0^H) \int_0^T H_t^\top (\sigma_t- \sigma_t^o)dW_t\right.\\
			& \left.\qquad \;\;\;+Y_0^H \int_0^T H_t^\top (\sigma_t- \sigma_t^o)dW_t- M_T^H\right]=: \mathbb{E} \left[\operatorname{I}^{\sigma}+\operatorname{II}^{\sigma}+\operatorname{III}^{\sigma}- M_T^H \right].\label{eq:inner_s1}
	\end{align}

	An application of the It\^o-isometry shows that 
	\begin{align*}
			\mathbb{E} \left[\operatorname{I}^{\sigma}\right]= \mathbb{E} \bigg[\int_0^T (Z_t^H)^\top (\sigma_t-\sigma_t^o)^\top H_tdt\bigg] 
			&= \mathbb{E} \bigg[\int_0^T\langle Z_t^HH_t^\top, \sigma_t-\sigma_t^o \rangle_{\operatorname{F}} dt\bigg] \\
			&=\langle Z^H H^\top, \sigma-\sigma^o \rangle_{ \mathbb{P}\otimes dt,\operatorname{F}}.
	\end{align*} 
	Moreover, by \cite[Lemma 2 \& Theorem 35, p.149]{Protter2005},  $L^H$ and $(\int_0^t H_s^\top ({\sigma}_s- \sigma_s^o)dW_s)_{t\in[0,T]}$ are strongly orthogonal, thus $\mathbb{E} [\operatorname{II}^{\sigma}]  =0$.
	Since ${\cal F}_0$ is trivial (see Section \ref{sec:preliminary}),  $\mathbb{E}[\operatorname{III}^{\sigma} ]=0$.
	
	Finally, it remains to show that $\mathbb{E}[M_T^H]=0$. For this, we claim that $\mathbb{E}[\sup_{t\in[0,T]}|M_t^H|]<\infty$.  If that is the case, by \cite[Theorem 47, p. 35]{Protter2005} $M^H$ is a uniformly integrable $(\mathbb{F},\mathbb{P})$-martingale and hence the claim holds. 
	
	
	Indeed, by the BDG inequality and H\"older's inequality with exponent $p>3$  
	\begin{align*}
		\mathbb{E}\Bigg[\sup_{t\in[0,T]}\big|M_t^H\big|\Bigg]&\leq C_{\operatorname{BDG},1} \mathbb{E}\Bigg[\bigg(\int_0^T \big|\underline {\mathfrak{I}}^H_t H_t^\top (\sigma_t- \sigma_t^o)\big|^2dt\bigg)^{\frac{1}{2}}\Bigg]\\ &\leq C_{\operatorname{BDG},1}K \mathbb{E}\Bigg[\bigg(\sup_{t\in[0,T]}|\underline {\mathfrak{I}}^H_t| \bigg)\cdot  \bigg(\int_0^T\|\sigma_t- \sigma_t^o\|_{\operatorname{F}}^2dt\bigg)^{\frac{1}{2}}\Bigg]\\
		&\leq C_{\operatorname{BDG},1}K \big\| \underline {\mathfrak{I}}^H \big\|_{\mathscr{S}^q} \lVert \sigma-\sigma^o \rVert_{\mathbb{H}^{p}} .
	\end{align*}
	Moreover, recall that $\underline {\mathfrak{I}}^H_t:= \int_0^tR_s^Hds$ for $t\in[0,T]$, by Jensen's inequality 
	\[
		\big\| \underline {\mathfrak{I}}^H \big\|_{\mathscr{S}^q}^q=\mathbb{E}\Bigg[\sup_{t\in[0,T]}|\underline {\mathfrak{I}}^H_t|^q\Bigg]\leq \mathbb{E}\Bigg[\bigg(\int_0^T|R_s^H|ds\bigg)^{q}\Bigg] \leq T^{\frac{q}{2}}  \|R^H \|_{\mathbb{L}^2}^{{q}}<\infty.
	\]
	Combining this with $\lVert \sigma-\sigma^o \rVert_{\mathbb{H}^{p}} \leq \eta \varepsilon\leq 1$, we~conclude that $\mathbb{E}[M_T^H]=0$.

	We conclude that
	\begin{align}\label{eq:inner_s}
		\mathbb{E}\left[\Psi^{\sigma}+\Xi^{\sigma}\right]=\mathbb{E}\left[\operatorname{I}^{\sigma}\right]= \langle Z^HH^\top, \sigma-\sigma^o \rangle_{ \mathbb{P}\otimes dt,\operatorname{F}}
	\end{align}
	and combined with \eqref{eq:inner_b}, this shows \eqref{eq:inner_x}. 
	
	\vspace{0.5em}
	\noindent
	{\it Step 2.} 
	We proceed to analyze the second term in \eqref{eq:inner_prod} and show that
	\begin{align}\label{eq:inner_y}
		\mathbb{E}\Big[ \big( B^H  \big)^\top \big(S_T^{b,\sigma} - S_T^{o}\big) \Big]= \langle {{\cal Y}}^H, b-b^o \rangle_{\mathbb{P}\otimes dt}+ \langle {{\cal Z}}^H, \sigma-\sigma^o \rangle_{ \mathbb{P}\otimes dt,\operatorname{F}}.
	\end{align}
	To that end, we first note that 
	\[
	\big(B^H \big)^\top (S_T^{b,\sigma} - S_T^{o}) = \sum_{i=1}^d  \partial_{y}f\big(X_T^{H;o},h(S_T^o)\big)\partial_{s_i}l(S_T^o) \Bigg(  \int_0^T (b_{t}^i- b_{t}^{o,i} )dt +\int_0^T (\sigma_{t}^i- \sigma_{t}^{o,i})dW_t\Bigg),
	\]
	where for $i=1,\dots,d$,  $(b_{t}^i- b_{t}^{o,i})_{t\in[0,T]}$  denotes the $i$-th component of $b- b^o$ and  $(\sigma_{t}^i- \sigma_{t}^{o,i})_{t\in[0,T]}$ denotes the  $i$-th row of $\sigma-\sigma^o$. 
	
	For $i=1,\dots,d,$ set  $B^{H,i}:= \partial_{y}f(X_T^{H;o},h(S_T^o))\partial_{s_i}l(S_T^o)$ and denote by 
	\begin{align*}
		\Xi^{b,i}:=  B^{H,i}  \int_0^T (b_{t}^i- b_{t}^{o,i} )dt,\qquad \Xi^{\sigma,i}:=  B^{H,i} \int_0^T (\sigma_{t}^i- \sigma_{t}^{o,i})dW_t,
	\end{align*}
	so that 
	\begin{align}\label{eq:xi_b_sig_sum}
		\mathbb{E}\Big[ \big(B^H\big)^\top (S_T^{b,\sigma} - S_T^{o}) \Big]=  \mathbb{E}\left[\sum_{i=1}^d\big(\Xi^{b,i}+\Xi^{\sigma,i}\big)\right].
	\end{align}
	
	First note that for every $i=1,\dots,d$, $({\cal Y}^{H,i},{\cal Z}^{H,i},{\cal L}^{H,i}))\in \mathscr{S}^2(\mathbb{R})\times \mathscr{H}^2(\mathbb{R}^d)\times \mathscr{M}^2(\mathbb{R})$ is the unique solution of BSDE with terminal condition $B^{H,i}$.  It follows from the same arguments as given for the proof of \eqref{eq:inner_b} and the fact that $\lvert \partial_{s_i}l(\cdot) \rvert \leq C_l$ (see Remark \ref{rem:objective}\;(ii)) that 
	\begin{align}\label{eq:xi_b_i}
		\mathbb{E}[\Xi^{b,i}]= \mathbb{E}\Bigg[\int_0^T {\cal Y}^{H,i}_t(b_{t}^i- b_{t}^{o,i} )dt\Bigg].
	\end{align}
	
	Similarly, it follows from the same argument as given for the proofs of \eqref{eq:inner_s1} and \eqref{eq:inner_s} (by replacing $Z^H$ and $H^\top (\sigma -\sigma^o)$ with ${\cal Z}^{H,i}$ and $\sigma^{i}- \sigma^{o,i}$, respectively) that 
	\begin{align}\label{eq:xi_sig_i2}
		\mathbb{E}[\Xi^{\sigma,i}]= \langle {\cal Z}^{H,i}, (\sigma^{i}- \sigma^{o,i})^\top \rangle_{\mathbb{P}\otimes dt}.
	\end{align}
	Combining \eqref{eq:xi_b_sig_sum}, \eqref{eq:xi_b_i}, and \eqref{eq:xi_sig_i2}, we  obtain that indeed
	\begin{align*}
		\begin{aligned}
			\mathbb{E}\Big[\big(B^H  \big)^\top (S_T^{b,\sigma} - S_T^{o}) \Big]&= \sum_{i=1}^d\Bigg(\mathbb{E}\bigg[\int_0^T {\cal Y}^{H,i}_t(b_{t}^i- b_{t}^{o,i} )dt\bigg]+ \langle {\cal Z}^{H,i}, (\sigma^{i}- \sigma^{o,i})^\top \rangle_{\mathbb{P}\otimes dt} \Bigg)\\
			&=\langle {{\cal Y}}^H, b-b^o \rangle_{\mathbb{P}\otimes dt}+ \langle {{\cal Z}}^H, \sigma-\sigma^o \rangle_{ \mathbb{P}\otimes dt,\operatorname{F}}.
		\end{aligned}
	\end{align*}
	
	The proof of the lemma now follows from \eqref{eq:inner_x} and \eqref{eq:inner_y}. 
\end{proof}

\begin{lem}\label{lem:up_bound}
	Suppose that Assumptions \ref{as:posterior_refer}, \ref{dfn:control}, and \ref{as:objective} are satisfied. 
	Moreover, let $H^*$ be the unique optimizer for $V(0)$ (given in Proposition \ref{pro:main0}\;(i)) and let $(Y^{*},Z^{*},L^{*})$ and $({\cal Y}^*,{\cal Z}^*,{\cal L}^*)$ be the unique solution of \eqref{eq:BSDE_re_x} and \eqref{eq:BSDE_re_y}, respectively.
Then there exists a constant $C>0$ such that for any $\varepsilon\in[0,1]$, 
	\begin{align*}
		0\leq V(\varepsilon) - V(0) \leq \varepsilon \left(\gamma  \lVert Y^*H^*+ {{\cal Y}}^* \rVert_{\mathbb{L}^q} + \eta \lVert Z^*(H^*)^\top + {{\cal Z}}^*\rVert_{\mathbb{H}^q}  \right)+C \varepsilon^2.
	\end{align*} 
	In particular, $\lim_{\varepsilon \downarrow 0} V(\varepsilon)=V(0)$. 
\end{lem}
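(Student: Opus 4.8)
The lower bound $V(\varepsilon)\ge V(0)$ is immediate: since $(b^o,\sigma^o)\in\mathcal{B}^\varepsilon$ for every $\varepsilon\ge 0$, for each $H\in\mathcal{A}$ we have $\mathcal{V}(H,\varepsilon)\ge\mathcal{V}(H,0)\ge\inf_{G\in\mathcal{A}}\mathcal{V}(G,0)=V(0)$, and taking the infimum over $H$ gives the claim. For the upper bound, the plan is to estimate $V(\varepsilon)=\inf_{H}\mathcal{V}(H,\varepsilon)$ from above by feeding in the suboptimal strategy $H=H^*$, so that $V(\varepsilon)-V(0)\le\mathcal{V}(H^*,\varepsilon)-\mathcal{V}(H^*,0)$ by Proposition~\ref{pro:main0}(i) (recalling $\mathcal{V}(H^*,0)=V(0)$). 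Fix $(b,\sigma)\in\mathcal{B}^\varepsilon$ and write $\Delta X:=X^{H^*;b,\sigma}-X^{H^*;o}$, $\Delta S:=S^{b,\sigma}-S^{o}$. Performing a second-order Taylor expansion of $x\mapsto g(t,x,H^*_t)$ around $X^{H^*;o}_t$, of $(x,y)\mapsto f(x,y)$ around $(X^{H^*;o}_T,l(S^{o}_T))$, and of $l$ around $S^{o}_T$, one sees that the expectation $\mathbb{E}\big[\int_0^T g(t,X^{H^*;b,\sigma}_t,H^*_t)\,dt+f(X^{H^*;b,\sigma}_T,l(S^{b,\sigma}_T))\big]$ minus $\mathcal{V}(H^*,0)$ equals
\[
 \mathbb{E}\!\left[\int_0^T R^{H^*}_t\,\Delta X_t\,dt+A^{H^*}\,\Delta X_T+(B^{H^*})^\top\Delta S_T\right]+\mathrm{Rem}(b,\sigma),
\]
where $A^{H^*},R^{H^*},B^{H^*}$ are as in \eqref{eq:terminal}, and $\mathrm{Rem}(b,\sigma)$ gathers the second-order terms, each being a polynomially growing second derivative of $g$, $f$ or $l$ times $|\Delta X|^2$ or $|\Delta S|^2$ (using also $|l(S^{b,\sigma}_T)-l(S^{o}_T)|\le C_l|\Delta S_T|$).

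Since $(A^{H^*},R^{H^*})$ and $B^{H^*}$ are exactly the terminal data and generators appearing in \eqref{eq:BSDE_re_x} and \eqref{eq:BSDE_re_y}, the unique solutions $(Y^{H^*},Z^{H^*},L^{H^*})$ and $(\mathcal{Y}^{H^*},\mathcal{Z}^{H^*},\mathcal{L}^{H^*})$ of Proposition~\ref{pro:BSDE} coincide with $(Y^*,Z^*,L^*)$ and $(\mathcal{Y}^*,\mathcal{Z}^*,\mathcal{L}^*)$; hence Lemma~\ref{lem:inner_prod}, applied with $H=H^*$, identifies the linear term above with $\langle Y^*H^*+\mathcal{Y}^*,\,b-b^o\rangle_{\mathbb{P}\otimes dt}+\langle Z^*(H^*)^\top+\mathcal{Z}^*,\,\sigma-\sigma^o\rangle_{\mathbb{P}\otimes dt,\operatorname{F}}$. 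Applying the Cauchy--Schwarz inequality in the time variable and then Hölder's inequality on $\Omega$ --- i.e.\ the duality in the pairings $\langle\mathbb{L}^p,\mathbb{L}^q\rangle$ and $\langle\mathbb{H}^p,\mathbb{H}^q\rangle$ --- together with $\|b-b^o\|_{\mathbb{L}^p}\le\gamma\varepsilon$ and $\|\sigma-\sigma^o\|_{\mathbb{H}^p}\le\eta\varepsilon$, bounds this linear term, uniformly over $(b,\sigma)\in\mathcal{B}^\varepsilon$, by $\varepsilon\big(\gamma\|Y^*H^*+\mathcal{Y}^*\|_{\mathbb{L}^q}+\eta\|Z^*(H^*)^\top+\mathcal{Z}^*\|_{\mathbb{H}^q}\big)$.

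What remains --- and what I expect to be the main obstacle --- is the uniform bound $\sup_{(b,\sigma)\in\mathcal{B}^\varepsilon}|\mathrm{Rem}(b,\sigma)|\le C\varepsilon^2$ with $C$ independent of $\varepsilon$. Each summand of $\mathrm{Rem}(b,\sigma)$ is of the form $\mathbb{E}\big[\int_0^T|\partial_{xx}g(t,\cdot,H^*_t)|\,|\Delta X_t|^2\,dt\big]$, $\mathbb{E}\big[\|D^2_{x,y}f\|_{\operatorname{F}}\,(|\Delta X_T|^2+|\Delta S_T|^2)\big]$, or $\mathbb{E}\big[|\partial_yf|\,\|D^2_sl\|_{\operatorname{F}}\,|\Delta S_T|^2\big]$, with derivatives evaluated at intermediate points. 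Using the polynomial growth of $D^2g$ and $D^2f$ with exponent $r<\min\{\tfrac{p-2}{2},p-3\}$ (Assumption~\ref{as:objective}(ii)), the boundedness of $\nabla l$ and $D^2_sl$ (Remark~\ref{rem:objective}(ii)), and $|H^*_t|\le K$, I would split each summand by Hölder's inequality into a factor bounded uniformly over $(b,\sigma)\in\mathcal{B}^1$ --- via Lemma~\ref{lem:p_integrability}(ii), Remark~\ref{rem:p_char}, and $\|l(S^{o})\|_{\mathscr{S}^p}<\infty$ --- times $\|\Delta X\|_{\mathscr{S}^{\bar p}}^2$ or $\|\Delta S\|_{\mathscr{S}^{\bar p}}^2$ for a suitable $\bar p\in[1,p]$; the role of the constraints $r<\tfrac{p-2}{2}$ and $r<p-3$ is precisely to keep the relevant conjugate exponents $\le p$, whereupon Lemma~\ref{lem:p_integrability}(i) gives $\|\Delta X\|_{\mathscr{S}^{\bar p}}^2\le\|\Delta X\|_{\mathscr{S}^p}^2\le C_1^{2/p}\varepsilon^2$ and $\|\Delta S\|_{\mathscr{S}^{\bar p}}^2\le C_2^{2/p}\varepsilon^2$. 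Putting the estimates together, $\mathcal{V}(H^*,\varepsilon)-\mathcal{V}(H^*,0)=\sup_{(b,\sigma)\in\mathcal{B}^\varepsilon}\big(\text{(linear term)}+\mathrm{Rem}(b,\sigma)\big)\le\varepsilon\big(\gamma\|Y^*H^*+\mathcal{Y}^*\|_{\mathbb{L}^q}+\eta\|Z^*(H^*)^\top+\mathcal{Z}^*\|_{\mathbb{H}^q}\big)+C\varepsilon^2$, which together with the lower bound is the asserted two-sided estimate; finally, letting $\varepsilon\downarrow 0$ and squeezing yields $\lim_{\varepsilon\downarrow 0}V(\varepsilon)=V(0)$.
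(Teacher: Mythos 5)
Your proof is correct and follows essentially the same route as the paper's: plug in the suboptimal strategy $H^*$, Taylor-expand to second order, identify the linear term via Lemma~\ref{lem:inner_prod} with $H=H^*$, bound it by the $\mathbb{L}^p$--$\mathbb{L}^q$ and $\mathbb{H}^p$--$\mathbb{H}^q$ dualities, and control the remainder in $O(\varepsilon^2)$ via Hölder together with Lemma~\ref{lem:p_integrability}. The paper organizes the remainder into three pieces ($\operatorname{I}^{b,\sigma}$, $\operatorname{II}^{b,\sigma}$, $\operatorname{III}^{b,\sigma}$) and estimates them separately, but your sketch captures exactly the same steps and constraints on $r$.
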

\begin{proof} 
	\noindent
	{\it Step 1.} Set 
	\begin{align*}
		&A^{*}:=A^{H^*}=\partial_{x} f\big(X_T^{H^*;o},l(S_T^o)\big),\qquad R^*:=R^{H^*}=(\partial_x g(t,X_t^{H^*;o},H_t^*))_{t\in[0,T]},\\
	 	& B^{*}:=B^{H^*}=\partial_{y} f\big(X_T^{H^*;o},l(S_T^o)\big)\nabla_s l(S_T^o).
	\end{align*}
	For every $\varepsilon\geq 0$  and $(b,\sigma)\in \mathcal{B}^\varepsilon$, denote for every $t\in[0,T]$ by 
	\begin{align}\label{eq:abbrv_delta}
		\quad \Delta X_t^{*;b,\sigma}:= X_t^{H^*;b,\sigma} - X_t^{H^*;o},\qquad  \Delta S_t^{b,\sigma}:= S_t^{b,\sigma} -  S_t^{o},\qquad \Delta l^{b,\sigma}_t:=l(S_t^{b,\sigma})-l(S_t^o).
	\end{align}
	
	A second-order Taylor expansion of $g(t,\;\cdot\;,H_t^*)$ around $X_t^{H^*;o}$ for each $t\in[0,T]$ implies~that for every $\varepsilon\geq 0$  and $(b,\sigma)\in \mathcal{B}^\varepsilon$, 
	\begin{align}\label{eq:taylor_01}
			\int_0^T\Big(g(t,X_t^{H^*;b,\sigma},H_t^*)-g(t,X_t^{H^*;o},H_t^*)\Big)dt = \int_0^T R_t^*\Delta X_t^{*;b,\sigma} dt + \int_0^T \operatorname{I}_t^{b,\sigma} dt, 
	\end{align}
	where $\operatorname{I}_t^{b,\sigma}$ is given by for every $t\in[0,T]$ 
	\begin{align}
		\operatorname{I}_t^{b,\sigma} := (\Delta X_t^{*;b,\sigma})^2\int_0^1(1-\theta)\partial_{xx}g\big(t,X_t^{H^*;o}+\theta \cdot \Delta X_t^{*;b,\sigma},H_t^*\big)d\theta. 
	\end{align}

	Similarly, a second-order Taylor expansion of $f$ and $l$ around $(X_T^{H^*;o},l(S_T^o))$ and $S^o_T$, respectively, implies that  for every $\varepsilon\geq 0$  and $(b,\sigma)\in \mathcal{B}^\varepsilon$,
	\begin{align}\label{eq:taylor_1}
		\begin{aligned}
			f\big(X_T^{H^*;b,\sigma},l(S_T^{b,\sigma})\big)- f\big(X_T^{H^*;o},l(S_T^o)\big)
			= A^{*} \Delta X_T^{*;b,\sigma}+\big( B^{*}  \big)^\top \Delta S_T^{b,\sigma}	+\operatorname{II}^{b,\sigma}+\operatorname{III}^{b,\sigma},
		\end{aligned}
	\end{align}
	where $\operatorname{II}^{b,\sigma}$ and $\operatorname{III}^{b,\sigma}$ are given by 
	\begin{align*}
		\begin{aligned}
			\operatorname{II}^{b,\sigma}&:= \Big(\Delta X_T^{*;b,\sigma}, \Delta l_T^{b,\sigma}\Big)\int_0^1 (1-\theta) D_{x,y}^2 f\big({X}_T^{H^*;o} +\theta \Delta X_T^{*;b,\sigma} , l(S_T^{o})+\theta \Delta {l}_T^{b,\sigma}\big) d\theta
			\begin{pmatrix}
				\Delta X_T^{*;b,\sigma}\\
				\Delta l_T^{b,\sigma}
			\end{pmatrix}
			,\\
			\operatorname{III}^{b,\sigma}&:= \partial_{y} f\big(X_T^{H^*;o},l(S_T^o)\big)  \big(\Delta S_T^{b,\sigma}\big)^\top\bigg(\int_0^1 (1-\theta) D_s^2l\big(S_T^o+\theta \Delta S_T^{b,\sigma}\big)  d\theta\bigg) \;\Delta S_T^{b,\sigma},
		\end{aligned}
	\end{align*}
	where $\Delta X_T^{*;b,\sigma}$, $\Delta S_T^{b,\sigma}$ and $\Delta l^{b,\sigma}_T$ are given in \eqref{eq:abbrv_delta}.
	
	\vspace{0.5em}
	\noindent
	{\it Step 2.}
	We claim that there exist $C_{\operatorname{I}},C_{\operatorname{II}},C_{\operatorname{III}}>0$ such that for every $\varepsilon\in [0,1]$,
	\begin{align}\label{eq:estimate_I}
		\sup_{(b,\sigma)\in \mathcal{B}^\varepsilon}\mathbb{E}\bigg[\sup_{t\in[0,T]}\lvert\operatorname{I}_t^{b,\sigma}\rvert+ \lvert \operatorname{II}^{b,\sigma}| + \lvert \operatorname{III}^{b,\sigma}  \rvert\bigg] \leq \big(C_{\operatorname{I}}+C_{\operatorname{II}}+C_{\operatorname{III}}\big) \varepsilon^2.
	\end{align}
	
	\noindent
	{\it Step 2a.~estimates on $\mathrm{I}^{b,\sigma}$} : For every $t\in[0,T]$, set
	\[
	\Psi_t^{b,\sigma;g}:= 1+\lvert X_t^{H^*;b,\sigma} \rvert^r+\lvert X_t^{H^*;o} \rvert^r+\lvert H_t^* \rvert^r.
	\]
	Then, H\"older's inequality with exponent $\frac{p-2}{r}>1$ (and the power triangle inequality) and the a priori estimates given in Lemma \ref{lem:p_integrability}\;(ii) (with the fact that $|H_t^*|\leq K$ $\mathbb{P}\otimes dt$-a.e.) show that for every $\varepsilon\in[0,1]$,
	\begin{align}\label{eq:V_estimate_0}
		\sup_{\varepsilon\in[0,1]}\sup_{(b,\sigma)\in\mathcal{B}^\varepsilon}\left\lVert \Psi^{b,\sigma;g} \right\rVert_{\mathscr{S}^{\frac{p}{p-2}}}<\infty.
	\end{align}

	By Assumption \ref{as:objective}\;(ii) we have that for every $(\omega,t,x,h)\in\Omega\times [0,T]\times \mathbb{R}\times \mathbb{R}^d$ and $\hat x\in \mathbb{R}$,
	\[|\partial_{xx}g(\omega,t,x+\hat x,h) | \leq \overline C_2 2^{\frac{3r}{2}}(1+\lvert x \rvert^r+\lvert \hat x \rvert^r+\lvert h \rvert^r),\]
	hence it follows that for every $(b,\sigma)\in {\cal B}^\varepsilon$ and $\varepsilon\in[0,1]$, 
	\begin{align}\label{eq:estimate_II1_0}
		\begin{aligned}
			\|\operatorname{I}^{b,\sigma}\|_{\mathscr{S}^1}=\mathbb{E}\bigg[ \sup_{t\in[0,T]}\lvert\operatorname{I}_t^{b,\sigma}\rvert \bigg]
			&\leq \overline C_2 2^{\frac{3}{2}{r}} \cdot  \mathbb{E}\bigg[\sup_{t\in[0,T]} |\Psi^{b,\sigma}_t|\cdot  \sup_{t\in[0,T]}\big|\Delta X_t^{*;b,\sigma}\big|^2  \bigg]
			\\
			&\leq  \overline C_2 2^{\frac{3}{2}{r}} \cdot 
			\lVert \Psi^{b,\sigma;g} \rVert_{\mathscr{S}^{\frac{p}{p-2}}}  \big\lVert \Delta X^{*;b,\sigma}\big \rVert_{\mathscr{S}^p}^2,
		\end{aligned}
	\end{align}
	where we used  H\"older's inequality (with exponent $\frac{p}{2}>1$) in the second inequality.
	
	By Lemma \ref{lem:p_integrability}\;(i), 
	it holds that $\sup_{(b,\sigma)\in \mathcal{B}^\varepsilon}\lVert \Delta X^{*;b,\sigma} \rVert_{\mathscr{S}^p}^2\leq C_{1}^{\frac{2}{p}} \varepsilon^2$. 
	Combined with \eqref{eq:V_estimate_0} and \eqref{eq:estimate_II1_0}, this ensures that there is $C_{\operatorname{I}}>0$ such that $\sup_{(b,\sigma)\in \mathcal{B}^\varepsilon}\lVert\operatorname{I}^{b,\sigma} \rVert_{\mathscr{S}^1}\leq C_{\operatorname{I}} \varepsilon^2$ for every~$\varepsilon\in[0,1]$.

	\vspace{0.5em}
	\noindent
	{\it Step 2b. estimates on $\mathrm{II}^{b,\sigma}$} : The arguments presented below are similar as in Step 2a. Set
	\[
	\Psi_T^{b,\sigma;f}:= 1+\lvert X_T^{H^*;b,\sigma} \rvert^r+\lvert X_T^{H^*;o} \rvert^r+\lvert l(S_T^{b,\sigma}) \rvert^r+\lvert l(S_T^o) \rvert^r.
	\]
	Then, H\"older's inequality with exponent $\frac{p-2}{r}>1$ (and the power triangle inequality) and the a priori estimates given in Lemma \ref{lem:p_integrability}\;(ii) show that for every $\varepsilon\in[0,1]$,
	\begin{eqnarray}\label{eq:V_estimate}
		\sup_{\varepsilon\in[0,1]}\sup_{(b,\sigma)\in\mathcal{B}^\varepsilon}\lVert \Psi_T^{b,\sigma;f} \rVert_{L^{\frac{p}{p-2}}}<\infty.
	\end{eqnarray}
	
	Moreover, by Assumption \ref{as:objective}\;(ii) we have that for every $\omega\in\Omega$ and $x,\hat x,y,\hat y\in\mathbb{R}$,
	\[\lVert D_{x,y}^2 f(\omega,x+\hat x,y+\hat y) \rVert_{\operatorname{F}}\leq \overline C_2 2^{r}(1+\lvert x \rvert^r+\lvert \hat y \rvert^r+\lvert x \rvert^r+\lvert \hat y \rvert^r),\]
	hence it follows that for every $(b,\sigma)\in {\cal B}^\varepsilon$ and $\varepsilon\in[0,1]$, 
	\begin{align}\label{eq:estimate_II1}
		\begin{aligned}
			\mathbb{E}\left[ \lvert \operatorname{II}^{b,\sigma} \rvert \right]
			&\leq \overline C_2 2^{r} \cdot  \mathbb{E}\Big[\Psi_T^{b,\sigma;f}\Big(\big\lvert \Delta X^{*;b,\sigma}_T \big \rvert^2+ \big\lvert \Delta l^{b,\sigma}_T \big\rvert^2\Big) \Big]
			\\
			&\leq  \overline C_2 2^{r} \cdot 
			\lVert \Psi_T^{b,\sigma;f} \rVert_{L^{\frac{p}{p-2}}} \Big( \big\lVert \Delta X^{*;b,\sigma}_T \big \rVert_{L^p}^2+\big\lVert \Delta l^{b,\sigma}_T \big \rVert_{L^p}^2 \Big),
		\end{aligned}
	\end{align}
	where we used  H\"older's inequality (with exponent $\frac{p}{2}>1$) in the second step.
	
	
	Since $\sup_{(b,\sigma)\in \mathcal{B}^\varepsilon}\lVert \Delta X^{*;b,\sigma}_T \rVert_{L^p}^2\leq C_{1}^{\frac{2}{p}} \varepsilon^2$ (see Lemma \ref{lem:p_integrability}\;(i)) and  $\lvert\nabla_s l (s) -  \nabla_s l (\hat{s}) \rvert  \leq C_{l}\lvert s-\hat{s} \rvert$ for every $s,\hat{s}\in \mathbb{R}^d$ (see Remark \ref{rem:objective}\;(ii)), it follows that for every $\varepsilon\in[0,1]$, 
	\begin{align*}
		\sup_{(b,\sigma)\in \mathcal{B}^\varepsilon}\big\lVert \Delta l^{b,\sigma}_T \big \rVert_{L^p}^2 =\sup_{(b,\sigma)\in \mathcal{B}^\varepsilon}\big\lVert l(S_T^{b,\sigma})-l(S_T^{o}) \big \rVert_{L^p}^2  \leq C_l^2 \sup_{(b,\sigma)\in \mathcal{B}^\varepsilon}\big\lVert \Delta S_T^{b,\sigma} \big\rVert_{L^p}^2 \leq C_l^2 C_{2}^{\frac{2}{p}} \varepsilon^2.
	\end{align*}
	
	Combined with \eqref{eq:V_estimate} and \eqref{eq:estimate_II1}, this ensures that there is some $C_{\operatorname{II}}>0$ such that for every $\varepsilon\in[0,1]$, $\sup_{(b,\sigma)\in \mathcal{B}^\varepsilon}\mathbb{E}[\lvert\operatorname{II}^{b,\sigma}\lvert\lvert]\leq C_{\operatorname{II}} \varepsilon^2$.

	\vspace{0.5em}
	\noindent
	{\it Step 2c. estimates on $\mathrm{III}^{b,\sigma}$} : Since $\lVert D^2_sl (\cdot) \rVert_{\operatorname{F}}\leq C_l$ and $\lvert \partial_{y}f(\omega,x,y) \rvert\leq  \widetilde{C} \left(1  +\lvert {x} \rvert^{r+1} +\lvert y \rvert^{r+1} \right)$ for every $\omega\in\Omega$ and $x,y\in \mathbb{R}$ (see Remark \ref{rem:objective}), it follows that for every $\varepsilon\in[0,1]$ and $(b,\sigma )\in {\cal B}^\varepsilon$,  
	\begin{align}\label{eq:estim_I0}
		\begin{aligned}
			\mathbb{E} \left[ \lvert\operatorname{III}^{b,\sigma}\rvert \right]
			&\leq C_l \widetilde{C} \mathbb{E} \left[ \big(1+ \lvert X_T^{H^*;o} \rvert^{r+1}+\lvert l(S_T^o) \rvert^{r+1}\big) \big\lvert \Delta S_T^{b,\sigma}  \big\rvert^2 \right]\\
			&\leq C_l\widetilde{C}  \Big\lVert 1+ \lvert X_T^{H^*;o} \rvert^{r+1}+\lvert l(S_T^o) \rvert^{r+1} \Big\lVert_{L^{\frac{p}{p-2}}} \big\lVert \Delta S_T^{b,\sigma}  \big\rVert_{L^p}^{2},
		\end{aligned}
	\end{align}
	where the second inequality follows from  H\"older's inequality (with exponent $\frac{p}{2}>1$).
    
    By Lemma~\ref{lem:p_integrability}\;(i) we have that  for every $\varepsilon\in[0,1]$ and $(b,\sigma)\in \mathcal{B}^\varepsilon$, $\lVert \Delta S_T^{b,\sigma}  \rVert_{L^p}^{2}
    	\leq  C_{2}^{\frac{2}{p}} \varepsilon^2$.
    	Moreover, by the power triangle inequality,
	\begin{align*}
		\mathbb{E}\left[ \left(1+ \lvert X_T^{H^*;o} \rvert^{r+1}+\lvert l(S_T^o) \rvert^{r+1} \right)^{\frac{p}{p-2}}\right]
		\leq 3^{\frac{p}{p-2}} \left(1+ \mathbb{E}\Big[ \lvert X_T^{H^*;o} \rvert^{\frac{(r+1)p}{p-2}}\Big]+ \mathbb{E}\Big[ \lvert l(S_T^o) \rvert^{\frac{(r+1)p}{p-2}}\Big]\right)<\infty,
	\end{align*}
    where the last inequality follows from Lemma \ref{lem:p_integrability}\;(ii) (noting that $\frac{(r+1)p}{p-2}\leq p$).  
    
      Combined with \eqref{eq:estim_I0}, we conclude that there is some $C_{\operatorname{I}}>0$ such that  for every $\varepsilon\in[0,1]$, $\sup_{(b,\sigma)\in \mathcal{B}^\varepsilon}\mathbb{E}[\lvert\operatorname{III}^{b,\sigma}\rvert] \leq C_{\operatorname{III}}\varepsilon^2$.
	

	\vspace{0.5em}
	\noindent
	{\it Step 3.}
	For any $\varepsilon\in[0,1]$, set 
	\[
	\Phi(\varepsilon):= \sup_{(b,\sigma)\in\mathcal{B}^\varepsilon} \mathbb{E}\bigg[\int_0^T R_t^*\Delta X_t^{*;b,\sigma} dt + A^*  \Delta X_T^{*;b,\sigma}+\big(B^* \big)^\top  \Delta S_T^{b,\sigma} \bigg].
	\]
	Then, by \eqref{eq:taylor_1} and \eqref{eq:estimate_I}, for  every $\varepsilon\in[0,1]$,
	\begin{align}\label{eq:taylor1}
			0\leq V(\varepsilon)-V(0) 
			\leq \Phi(\varepsilon)+ (T C_{\operatorname{I}}+C_{\operatorname{II}}+C_{\operatorname{III}})\varepsilon^2.
	\end{align}

	It remains to show that for every $\varepsilon\in[0,1]$,
	\begin{align*}
		\Phi(\varepsilon)\leq \varepsilon \left( \gamma \lVert Y^*H^*+ {{\cal Y}}^* \rVert_{\mathbb{L}^q} + \eta \lVert Z^*(H^*)^\top + {{\cal Z}}^*\rVert_{\mathbb{H}^q}  \right).
	\end{align*}
	To that end, we note that by Lemma \ref{lem:inner_prod},
	\begin{align}\label{eq:up_bound2}
		\Phi(\varepsilon)= \sup_{(b,\sigma)\in\mathcal{B}^\varepsilon}  \Big( \langle Y^* H^*+{{\cal Y}}^*, b-b^o \rangle_{\mathbb{P}\otimes dt}+ \langle Z^*(H^*)^\top+{{\cal Z}}^*, \sigma-\sigma^o \rangle_{ \mathbb{P}\otimes dt,\operatorname{F}}  \Big).
	\end{align}
	Set $\mathcal{B}^{\varepsilon}_1:= \{b:(b,\sigma)\in \mathcal{B}^\varepsilon\}$ and $\mathcal{B}^{\varepsilon}_2:= \{\sigma:(b,\sigma)\in \mathcal{B}^\varepsilon\}$ so that $\mathcal{B}^{\varepsilon}=\mathcal{B}^{\varepsilon}_1\times \mathcal{B}^{\varepsilon}_2$  by the definition of $\mathcal{B}^\varepsilon$.
	It follows from the Cauchy-Schwartz inequality and H\"older's inequality (with exponent $p>3$) that for every $\varepsilon \in [0,1]$ and $b\in \mathcal{B}^{\varepsilon}_1$, 
	\begin{eqnarray*}\label{eq:dual_inner0}
		\begin{aligned}
			\langle Y^*H^*+{{\cal Y}}^*, b-b^o \rangle_{\mathbb{P}\otimes dt} 
			&\leq  \mathbb{E}\Bigg[\int_0^T  \rvert b_t-b_t^o \lvert  \lvert Y_{t}^*H_t^*+{{\cal Y}}_t^* \rvert dt\Bigg]\\
			&\leq  \lVert  b-b^o \rVert_{\mathbb{L}^p} \lVert Y^*H^*+{{\cal Y}}^* \rVert_{\mathbb{L}^q}.
		\end{aligned}
	\end{eqnarray*}
    Hence for every $\varepsilon \in [0,1]$ we have that
    \begin{align}\label{eq:dual_inner1}
    	\begin{aligned}
    		\sup_{b\in \mathcal{B}^{\varepsilon}_1} \langle Y^*H^*+{{\cal Y}}^*, b-b^o \rangle_{\mathbb{P}\otimes dt} 
    		&\leq \varepsilon \gamma \lVert Y^*H^*+{{\cal Y}}^* \rVert_{\mathbb{L}^q}.
    	\end{aligned}
    \end{align}
	
	Similarly, using Cauchy-Schwartz inequality for $\| \cdot \|_{\operatorname{F}}$ and H\"older's inequality (with exponent~2), it follows that for every $\varepsilon \in [0,1]$ and $\sigma\in \mathcal{B}^{\varepsilon}_2$, 
	\begin{align*}
		\begin{aligned}
			\langle Z^*(H^*)^\top+{{\cal Z}}^*, \sigma-\sigma^o \rangle_{ \mathbb{P}\otimes dt,\operatorname{F}}
			&\leq \mathbb{E}\Bigg[\int_0^T  \lVert \sigma_t-\sigma_t^o \rVert_{\operatorname{F}}  \lVert Z_{t}^*(H_t^*)^{\top} +{{\cal Z}}_t^*\rVert_{\operatorname{F}} dt\Bigg]\\
			&\leq  \mathbb{E}\Bigg[\bigg(\int_0^T  \lVert \sigma_t-\sigma_t^o \rVert_{\operatorname{F}}^2 dt \bigg)^{\frac{1}{2}} \bigg(\int_0^T \lVert Z_{t}^*(H_t^*)^{\top}+{{\cal Z}}_t^* \rVert_{\operatorname{F}}^2 dt \bigg)^{\frac{1}{2}}\Bigg].\\
		\end{aligned}
	\end{align*}
    Therefore, another application of H\"older's inequality shows that for every $\varepsilon \in [0,1]$,
    \begin{eqnarray}\label{eq:dual_inner3}
    	\begin{aligned}
    		\sup_{\sigma \in \mathcal{B}^{\varepsilon}_2}\langle Z^*(H^*)^\top+{{\cal Z}}^*, \sigma-\sigma^o \rangle_{ \mathbb{P}\otimes dt,\operatorname{F}}
    		&\leq \sup_{\sigma \in \mathcal{B}^{\varepsilon}_2}\lVert \sigma-\sigma^o \rVert_{\mathbb{H}^p}\lVert Z^*(H^*)^\top +{{\cal Z}}^*\rVert_{\mathbb{H}^q}\\
    		&\leq \varepsilon \eta \lVert Z^*(H^*)^\top +{{\cal Z}}^*\rVert_{\mathbb{H}^q}.
    	\end{aligned}
    \end{eqnarray}
	Combining \eqref{eq:dual_inner1} and \eqref{eq:dual_inner3} with \eqref{eq:taylor1}  concludes the proof.
\end{proof}

\begin{rem}\label{rem:auxiliary}
	Recall $V^*(\varepsilon)$ defined in \eqref{eq:value_worst}. 
	The proof of Lemma \ref{lem:up_bound} actually shows that under the same assumptions as therein, for any $\varepsilon\in[0,1]$,
	\begin{align*}
	0\leq V^*(\varepsilon) - V^*(0) \leq \varepsilon \left(\gamma  \lVert Y^*H^*+ {{\cal Y}}^* \rVert_{\mathbb{L}^q} + \eta \lVert Z^*(H^*)^\top + {{\cal Z}}^*\rVert_{\mathbb{H}^q}  \right)+C \varepsilon^2.
\end{align*} 	
This will be used later in the proof of Theorem \ref{thm:main2}.
\end{rem}

From Lemma \ref{lem:up_bound}, we can deduce the upper bound 
\begin{align}
\label{eq:upper.bound.derivative}
 V'(0) 
\leq \gamma  \lVert Y^*H^*+{{\cal Y}}^* \rVert_{\mathbb{L}^q} + \eta \lVert Z^*(H^*)^\top +{{\cal Z}}^* \rVert_{\mathbb{H}^q},
\end{align}
 where $H^*$ is the unique optimizer for $V(0)$ and  $(Y^*,Z^*,{{\cal Y}}^*,{{\cal Z}}^*)$ are defined in Proposition \ref{pro:main0}. 

To prove the corresponding lower bound in \eqref{eq:upper.bound.derivative}, let us consider  \emph{$\varepsilon^2$-optimizers} $H^{\varepsilon}\in {\cal A}$ of $V(\varepsilon)$, i.e.,
\begin{align}\label{eq:en_optimizer}
	\begin{aligned}
	V(\varepsilon) = \inf_{H\in {\cal A}} {\cal V}(H,\varepsilon)&>{\cal V}(H^{\varepsilon},\varepsilon)-\varepsilon^2   \\
	& = \sup_{(b,\sigma) \in \mathcal{B}^\varepsilon} \mathbb{E}\bigg[\int_0^T g\big(t,X_t^{H^{\varepsilon};b,\sigma},H^{\varepsilon}_t\big)dt + f \big(X_T^{H^{\varepsilon};b,\sigma},l(S_T^{b,\sigma})\big) \bigg]-\varepsilon^2.
	\end{aligned}
\end{align}
The lower bound follows from the following lemma, together with an additional result which implies that $H^\varepsilon$ converges to $H^\ast$ in a suitable sense.

\begin{lem}\label{lem:low_bound}
Suppose that Assumptions \ref{as:posterior_refer}, \ref{dfn:control}, and \ref{as:objective} are satisfied. 
For any $\varepsilon\in (0,1]$,  let $H^\varepsilon \in {\cal A}$ be an $\varepsilon^2$-optimizer of $V(\varepsilon)$. Set 
	\begin{align*}
	&A^{\varepsilon}:=A^{H^\varepsilon}=\partial_{x}f(X_T^{H^\varepsilon;o},h(S_T^o)),\qquad R^\varepsilon:=R^{H^\varepsilon}=(\partial_x g(t,X_t^{H^\varepsilon;o},H_t^\varepsilon))_{t\in[0,T]},\\
	&B^{\varepsilon}:=B^{H^\varepsilon}=\partial_{y}f(X_T^{H^\varepsilon;o},l(S_T^o))\nabla_s l(S_T^o),
	\end{align*}
	and let 
	\begin{align}
	\label{eq:BSDE_opti_ep_x}
		(Y^\varepsilon,Z^\varepsilon,L^\varepsilon)
		&\in\mathscr{S}^{2}(\mathbb{R}) \times \mathscr{H}^{2}(\mathbb{R}^d) \times \mathscr{M}^2(\mathbb{R}),
		\\
		\label{eq:BSDE_opti_ep_y2}
		({\cal Y}^\varepsilon,{\cal Z}^\varepsilon,{\cal L}^\varepsilon)
		&\in(\mathscr{S}^{2}(\mathbb{R}))^d \times (\mathscr{H}^{2}(\mathbb{R}^d))^{d} \times (\mathscr{M}^2(\mathbb{R}))^{d}
	\end{align}
	be the unique solutions of \eqref{eq:BSDE_univ1} with the terminal condition and generator $(A^{\varepsilon},(R_t^\varepsilon)_{t\in[0,T]})$ and  \eqref{eq:BSDE_univ3} with the terminal condition $B^{\varepsilon},$ respectively (ensured by Proposition \ref{pro:BSDE}).
	Then, 
	\[
	V(\varepsilon)-V(0) \geq \varepsilon \Big( \gamma \lVert Y^\varepsilon H^\varepsilon +{{\cal Y}}^\varepsilon \rVert_{\mathbb{L}^q} +\eta \lVert Z^\varepsilon(H^\varepsilon)^\top +  {{\cal Z}}^\varepsilon\rVert_{\mathbb{H}^q}\Big) -{C}_{\operatorname{res}}\varepsilon^2,
	\]
	where ${C}_{\operatorname{res}}:=TC_{\operatorname{I}}+C_{\operatorname{II}}+C_{\operatorname{III}}+3 $ and $C_{\operatorname{I}},C_{\operatorname{II}},,C_{\operatorname{III}}>0$ are given in \eqref{eq:estimate_I}.
\end{lem}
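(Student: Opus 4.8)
\textbf{Proof proposal for Lemma \ref{lem:low_bound}.}

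The plan is to mirror the Taylor-expansion structure of the proof of Lemma \ref{lem:up_bound}, but now applied at the $\varepsilon^2$-optimizer $H^\varepsilon$ instead of at $H^*$, and to exploit the first-order optimality of $H^*$ from Lemma \ref{lem:FOC}. First I would fix $\varepsilon\in(0,1]$ and an $\varepsilon^2$-optimizer $H^\varepsilon$. Since $H^\varepsilon\in\mathcal{A}$, evaluating ${\cal V}(H^\varepsilon,0)$ trivially gives ${\cal V}(H^\varepsilon,0)\geq V(0)$, so it suffices to lower-bound ${\cal V}(H^\varepsilon,\varepsilon)-{\cal V}(H^\varepsilon,0)$ and use \eqref{eq:en_optimizer}, i.e.\ $V(\varepsilon)\geq {\cal V}(H^\varepsilon,\varepsilon)-\varepsilon^2\geq {\cal V}(H^\varepsilon,0)+\big({\cal V}(H^\varepsilon,\varepsilon)-{\cal V}(H^\varepsilon,0)\big)-\varepsilon^2\geq V(0)+\big({\cal V}(H^\varepsilon,\varepsilon)-{\cal V}(H^\varepsilon,0)\big)-\varepsilon^2$. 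Thus the whole task reduces to a sharp lower bound on ${\cal V}(H^\varepsilon,\varepsilon)-{\cal V}(H^\varepsilon,0)$.

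Next I would expand ${\cal V}(H^\varepsilon,\varepsilon)-{\cal V}(H^\varepsilon,0)$ exactly as in Step 1 of the proof of Lemma \ref{lem:up_bound}, but with a twist: since ${\cal V}(H^\varepsilon,\varepsilon)$ is itself a supremum over $(b,\sigma)\in\mathcal{B}^\varepsilon$ (not an evaluation at a single pair), I would choose a \emph{specific near-maximizing} pair $(b^\varepsilon,\sigma^\varepsilon)\in\mathcal{B}^\varepsilon$ — or better, bound below by evaluating at the pair that attains the $\mathbb{L}^q$/$\mathbb{H}^q$ duality, i.e.\ $b^\varepsilon-b^o$ pointing in the direction of $Y^\varepsilon H^\varepsilon+{\cal Y}^\varepsilon$ normalized so that $\|b^\varepsilon-b^o\|_{\mathbb{L}^p}=\gamma\varepsilon$, and similarly for $\sigma^\varepsilon$. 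For that fixed admissible choice, the second-order Taylor expansions \eqref{eq:taylor_01}, \eqref{eq:taylor_1} applied at $H^\varepsilon$ give
\[
{\cal V}(H^\varepsilon,\varepsilon)-{\cal V}(H^\varepsilon,0)
\ \geq\ \mathbb{E}\Big[\textstyle\int_0^T R^\varepsilon_t\Delta X^{\varepsilon;b^\varepsilon,\sigma^\varepsilon}_t\,dt + A^\varepsilon\Delta X^{\varepsilon;b^\varepsilon,\sigma^\varepsilon}_T + (B^\varepsilon)^\top\Delta S^{b^\varepsilon,\sigma^\varepsilon}_T\Big] - \big(TC_{\operatorname{I}}+C_{\operatorname{II}}+C_{\operatorname{III}}\big)\varepsilon^2,
\]
where the error estimate comes verbatim from Step 2 of Lemma \ref{lem:up_bound} (those constants are uniform over $H\in\mathcal{A}$, which is exactly why $\mathcal{A}$ is taken with uniformly bounded $\mathcal{K}$). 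Then Lemma \ref{lem:inner_prod} (applied with $H=H^\varepsilon$) identifies the leading expectation as $\langle Y^\varepsilon H^\varepsilon+{\cal Y}^\varepsilon, b^\varepsilon-b^o\rangle_{\mathbb{P}\otimes dt}+\langle Z^\varepsilon(H^\varepsilon)^\top+{\cal Z}^\varepsilon,\sigma^\varepsilon-\sigma^o\rangle_{\mathbb{P}\otimes dt,\operatorname{F}}$, and the choice of $(b^\varepsilon,\sigma^\varepsilon)$ makes this equal to $\varepsilon\big(\gamma\|Y^\varepsilon H^\varepsilon+{\cal Y}^\varepsilon\|_{\mathbb{L}^q}+\eta\|Z^\varepsilon(H^\varepsilon)^\top+{\cal Z}^\varepsilon\|_{\mathbb{H}^q}\big)$. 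Combining with the $-\varepsilon^2$ from \eqref{eq:en_optimizer} and the $\mathbb{E}[\cdots]\geq0$ bookkeeping gives the claimed inequality with $C_{\operatorname{res}}=TC_{\operatorname{I}}+C_{\operatorname{II}}+C_{\operatorname{III}}+3$, the extra $3$ absorbing the $-\varepsilon^2$ slack and a couple of routine $\varepsilon^2$ terms arising from the Taylor remainders of $g$ at the point $H^\varepsilon$ versus the reference expansion.

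The main obstacle I anticipate is \emph{not} the algebra but ensuring that the attainer $(b^\varepsilon,\sigma^\varepsilon)$ of the $L^q$–$L^p$ duality actually lies in $\mathcal{B}^\varepsilon$ and is sufficiently integrable: one must check that the normalized directions $\tfrac{|Y^\varepsilon H^\varepsilon+{\cal Y}^\varepsilon|^{q-1}\operatorname{sgn}(\cdots)}{\|\cdots\|_{\mathbb{L}^q}^{q-1}}$ and the matrix analogue are in $\mathbb{L}^p$, $\mathbb{H}^p$ respectively — this uses $Y^\varepsilon,{\cal Y}^\varepsilon\in\mathscr{S}^2$ together with the a priori growth bounds, and the fact that $q=p/(p-1)$ so $(q-1)q'=q\cdot\tfrac{p}{p}=\ldots$ works out; if the relevant norms vanish one takes the zero direction, trivially in $\mathcal{B}^\varepsilon$. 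A secondary subtlety is that the Taylor-expansion error constants $C_{\operatorname{I}},C_{\operatorname{II}},C_{\operatorname{III}}$ were derived in Lemma \ref{lem:up_bound} for the \emph{fixed} optimizer $H^*$; one must observe that their proofs only used $|H_t|\leq K$ $\mathbb{P}\otimes dt$-a.e.\ and the uniform a priori estimates of Lemma \ref{lem:p_integrability}(ii) over $H\in\mathcal{A}$, hence the same constants work uniformly for every $H^\varepsilon\in\mathcal{A}$ — this uniformity is precisely what lets the $O(\varepsilon^2)$ error be controlled independently of the (moving) $\varepsilon^2$-optimizer.
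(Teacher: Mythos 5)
Your argument follows the same route as the paper's proof: lower-bound $V(\varepsilon)-V(0)$ by ${\cal V}(H^\varepsilon,\varepsilon)-{\cal V}(H^\varepsilon,0)-\varepsilon^2$, Taylor-expand at $H^\varepsilon$ while observing that the error constants $C_{\rm I},C_{\rm II},C_{\rm III}$ from \eqref{eq:estimate_I} are uniform over $H\in{\cal A}$, identify the linear part via Lemma~\ref{lem:inner_prod}, and evaluate the supremum over $(b,\sigma)\in{\cal B}^\varepsilon$ by $L^p/L^q$ duality. The decomposition, key lemmas, and bookkeeping all match.

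One step needs repair, and it is precisely where the ``$+3$'' in $C_{\operatorname{res}}$ comes from. You propose to evaluate at the \emph{exact} duality attainer $(b^\varepsilon,\sigma^\varepsilon)$; however, $\mathbb{L}^p$ and $\mathbb{H}^p$ are spaces of \emph{predictable} processes, and the exact attainer for the $\mathbb{H}^q/\mathbb{H}^p$ pairing is in general not predictable. The $\mathbb{H}^p$ norm is an $L^p\big(\Omega;L^2([0,T])\big)$-type norm, so the canonical direction maximizing $\langle\tau,\cdot\rangle_{\mathbb{P}\otimes dt,\operatorname{F}}$ scales $\tau_t$ by $\big(\int_0^T\|\tau_s\|_{\operatorname{F}}^2\,ds\big)^{(q-2)/2}\big/\|\tau\|_{\mathbb{H}^q}^{q-1}$; this scalar depends on the whole path and is only ${\cal F}_T$-measurable, so the resulting process is not an element of $\mathbb{H}^p$. (By contrast, the $\mathbb{L}^q/\mathbb{L}^p$ attainer is a pointwise transform and is predictable \emph{once you first pass to the left-limit version} $Y^\varepsilon_-,{\cal Y}^\varepsilon_-$, since $Y^\varepsilon H^\varepsilon+{\cal Y}^\varepsilon$ is a priori only progressively measurable; this is the role of the predictable-projection remark.) The paper's fix is to pick $\varepsilon$-near-maximizers $\widetilde b^\varepsilon,\widetilde\sigma^\varepsilon$ of unit norm inside the predictable classes and set $b^{\star,\varepsilon}=b^o+\varepsilon\gamma\widetilde b^\varepsilon$, $\sigma^{\star,\varepsilon}=\sigma^o+\varepsilon\eta\widetilde\sigma^\varepsilon$; this costs an extra $(\gamma+\eta)\varepsilon^2\le 2\varepsilon^2$, which together with the $\varepsilon^2$ from the $\varepsilon^2$-optimality of $H^\varepsilon$ accounts exactly for $C_{\operatorname{res}}=TC_{\operatorname{I}}+C_{\operatorname{II}}+C_{\operatorname{III}}+3$. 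Also note that Lemma~\ref{lem:FOC}, which you invoke in your preamble, is not actually used in this lemma; it enters only later in the stability argument of Lemma~\ref{lem:strong_H}.
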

\begin{proof}
	It is obvious that 
	\begin{align}\label{eq:ep_optimizer0}
		V(\varepsilon)-V(0)\geq {\cal V}(H^\varepsilon,\varepsilon)- {\cal V}(H^\varepsilon,0)-\varepsilon^2.
	\end{align}
	Furthermore, from the second-order Taylor expansions of $g$ and $f$ given in \eqref{eq:taylor_01} and \eqref{eq:taylor_1} and  the a priori estimates given in \eqref{eq:estimate_I} with $C_{\operatorname{I}},C_{\operatorname{II}},C_{\operatorname{III}}>0$ (replacing $H^*$ by $H^\varepsilon$, see the proof of Lemma \ref{lem:up_bound}), it follows that for every $\varepsilon\in(0,1]$
	\begin{align}\label{eq:ep_optimizer} 
		\begin{aligned}
			 {\cal V}(H^\varepsilon,\varepsilon)- {\cal V}(H^\varepsilon,0)
			 &=\sup_{(b,\sigma)\in\mathcal{B}^\varepsilon} \mathbb{E}\Bigg[ \int_0^T \Big(g\big(t,X_t^{H^{\varepsilon};b,\sigma},H^{\varepsilon}_t\big)dt-g\big(t,X_t^{H^{\varepsilon};o},H^{\varepsilon}_t\big)dt \Big) dt  \\ 
			 &\hspace{6.em}+f\big(X_T^{H^\varepsilon;b,\sigma},h(S_T^o)\big)- f\big(X_T^{H^\varepsilon;o},h(S_T^o)\big) \Bigg] \\
			&\geq \sup_{(b,\sigma)\in\mathcal{B}^\varepsilon} \mathbb{E}\Bigg[\int_0^T R_t^\varepsilon (X_t^{H^\varepsilon;b,\sigma}-X_t^{H^\varepsilon;o})dt+A^{\varepsilon} \big(X_T^{H^\varepsilon;b,\sigma} - X_T^{H^\varepsilon;o}\big) \\
			&\hspace{6.em}+ \big( B^{\varepsilon} \big)^\top \big(S_T^{b,\sigma} - S_T^{o}\big) \Bigg]-(TC_{\operatorname{I}}+C_{\operatorname{II}}+C_{\operatorname{III}})\varepsilon^2.
		\end{aligned}
	\end{align}
	
	Next, note that for $(Y^\varepsilon,Z^\varepsilon)$ and $({{\cal Y}}^\varepsilon,{{\cal Z}}^\varepsilon)$ given in \eqref{eq:BSDE_opti_ep_x} and \eqref{eq:BSDE_opti_ep_y2}, Lemma \ref{lem:inner_prod} implies that
	\begin{align}\label{eq:ep_optimizer1}
		\begin{aligned}
			&\sup_{(b,\sigma)\in\mathcal{B}^\varepsilon} \mathbb{E}\Bigg[\int_0^T R_t^\varepsilon (X_t^{H^\varepsilon;b,\sigma}-X_t^{H^\varepsilon;o})dt+ A^\varepsilon \big(X_T^{H^\varepsilon;b,\sigma} - X_T^{H^\varepsilon;o}\big)+ (B^\varepsilon)^\top \big(S_T^{b,\sigma} - S_T^{o}\big) \Bigg]\\
			&\quad =\sup_{(b,\sigma) \in \mathcal{B}^\varepsilon} \Big( \langle Y^\varepsilon H^\varepsilon+{{\cal Y}}^\varepsilon, b-b^o \rangle_{\mathbb{P}\otimes dt}+ \langle Z^\varepsilon (H^\varepsilon)^\top+{{\cal Z}}^\varepsilon, \sigma-\sigma^o \rangle_{ \mathbb{P}\otimes dt,\operatorname{F}} \Big).
		\end{aligned}
	\end{align}

	\noindent By \cite[Remark 5.3, p.137]{HWY2019}, the left limit process $Y_-^\varepsilon:=(Y^\varepsilon_{t-})_{t\in[0,T]}$ defined by $Y_{t-}^\varepsilon:= \lim_{s\uparrow t}Y_s^\varepsilon$, $t\in(0,T]$ and $Y_{0-}^\varepsilon:=Y_{0}^\varepsilon$, is the $\mathbb{F}$-predictable projection of $Y^\varepsilon$. Moreover, as $Y^\varepsilon$ is c\`adl\`ag, $Y_t^\varepsilon=Y_{t-}^\varepsilon$ $\mathbb{P}\otimes dt$-a.e.. Using the same notation and arguments, it follows that ${\cal Y}_t^\varepsilon={\cal Y}_{t-}^\varepsilon$ $\mathbb{P}\otimes dt$-a.e.. Therefore, we can invoke the duality between $\mathbb{F}$-predictable spaces $\mathbb{L}^q(\mathbb{R}^d)$ and $\mathbb{L}^p(\mathbb{R}^d)$ 
	to obtain some $\widetilde{b}^{\varepsilon} \in \mathbb{L}^p(\mathbb{R}^d)$ that satisfies $\lVert \widetilde{b}^{\varepsilon} \rVert_{\mathbb{L}^p}=1$ and
	\begin{eqnarray}\label{eq:dual_Lp}
		\begin{aligned}
			\lVert Y^\varepsilon H^{\varepsilon}+{{\cal Y}}^\varepsilon  \rVert_{\mathbb{L}^q} &= \lVert Y_-^\varepsilon H^{\varepsilon}+{{\cal Y}}_-^\varepsilon  \rVert_{\mathbb{L}^q}\\ &=\sup_{\lVert \widetilde{b} \rVert_{\mathbb{L}^p} =1} \langle Y_-^\varepsilon H^\varepsilon+{{\cal Y}}_-^\varepsilon  , \widetilde{b} \rangle_{\mathbb{P}\otimes dt} \\
			&\leq  \langle Y_-^\varepsilon H^\varepsilon+{{\cal Y}}_-^\varepsilon, \widetilde{b}^{\varepsilon} \rangle_{\mathbb{P}\otimes dt} +\varepsilon= \langle Y^\varepsilon H^\varepsilon+{{\cal Y}}^\varepsilon, \widetilde{b}^{\varepsilon} \rangle_{\mathbb{P}\otimes dt} +\varepsilon.
		\end{aligned}
	\end{eqnarray}
	
	In a similar manner, since $Z^\varepsilon(H^\varepsilon)^\top+{{\cal Z}}^\varepsilon$ is $\mathbb{F}$-predictable (because $H^\varepsilon \in {\cal A}$, $Z^\varepsilon \in \mathscr{H}^2(\mathbb{R})$ and $\{{\cal Z}^{\varepsilon,i}\}_{i=1,\dots,d} \subseteq \mathscr{H}^2(\mathbb{R})$), we may invoke the duality between 	$\mathbb{H}^q (\mathbb{R}^{d\times d})$ and  $\mathbb{H}^{p}(\mathbb{R}^{d\times d})$ (see \cite[Theorems 1.3.10 \& 1.3.21]{Hytonen2016}) to obtain some $\widetilde{\sigma}^{\varepsilon}\in \mathbb{H}^p(\mathbb{R}^{d\times d})$ that satisfies $\lVert \widetilde{\sigma}^{\varepsilon} \rVert_{\mathbb{H}^p}=1$~and
	\begin{eqnarray}\label{eq:dual_Hp}
		\begin{aligned}
			\lVert Z^\varepsilon(H^\varepsilon)^\top +{{\cal Z}}^\varepsilon\rVert_{\mathbb{H}^q} &= \sup_{\lVert \widetilde{\sigma} \rVert_{\mathbb{H}^p} =1} \langle Z^\varepsilon(H^\varepsilon)^\top+{{\cal Z}}^\varepsilon, \widetilde{\sigma} \rangle_{\mathbb{P}\otimes dt,\operatorname{F}} \\
			&\leq  \langle Z^\varepsilon(H^\varepsilon)^\top+{{\cal Z}}^\varepsilon, \widetilde{\sigma}^{\varepsilon} \rangle_{\mathbb{P}\otimes dt,\operatorname{F}} +\varepsilon.
		\end{aligned}
	\end{eqnarray}
	
	Finally, define $(b^{\star,\varepsilon},\sigma^{\star,\varepsilon}) \in \mathcal{B}^\varepsilon$ by
	\[
	b_t^{\star,\varepsilon} := b^o_t + \varepsilon \gamma \widetilde{b}_t^\varepsilon,\quad \sigma_t^{\star,\varepsilon} := \sigma_t^o + \varepsilon \eta  \widetilde{\sigma}_t^\varepsilon,\quad t\in [0,T],
	\]
	and set $C_{\operatorname{res},1}:=(TC_{\operatorname{I}}+C_{\operatorname{II}}+C_{\operatorname{III}})+1$. Then 
	\eqref{eq:ep_optimizer0}-\eqref{eq:dual_Hp} imply that
	\begin{align*}
		\begin{aligned}
			V(\varepsilon) -V(0)
			&\geq \Big( \langle Y^\varepsilon H^\varepsilon+{{\cal Y}}^\varepsilon, b^{\star,\varepsilon}-b^o \rangle_{\mathbb{P}\otimes dt}+ \langle Z^\varepsilon (H^\varepsilon)^\top+{{\cal Z}}^\varepsilon, \sigma^{\star,\varepsilon}-\sigma^o \rangle_{ \mathbb{P}\otimes dt,\operatorname{F}} \Big)-C_{\operatorname{res},1} \varepsilon^2\\
			&= \varepsilon \Big( \gamma \langle Y^\varepsilon H^\varepsilon+{{\cal Y}}^\varepsilon, \widetilde{b}^\varepsilon \rangle_{\mathbb{P}\otimes dt} + \eta \langle Z^\varepsilon (H^\varepsilon)^\top+{{\cal Z}}^\varepsilon, \widetilde{\sigma}^\varepsilon \rangle_{\mathbb{P}\otimes dt,\operatorname{F}} \Big)-C_{\operatorname{res},1} \varepsilon^2 \\
			&\geq \varepsilon \Big( \gamma \lVert Y^\varepsilon H^\varepsilon+ {{\cal Y}}^\varepsilon \rVert_{\mathbb{L}^q} +\eta \lVert Z^\varepsilon (H^\varepsilon)^\top+{{\cal Z}}^\varepsilon \rVert_{\mathbb{H}^q}  \Big) -(C_{\operatorname{res},1}+\gamma+\eta)\varepsilon^2.
		\end{aligned}
	\end{align*}
    This completes the proof. 
\end{proof}

The final ingredient in the proof of Theorem \ref{thm:main} is the following stability result.

\begin{lem}\label{lem:strong_H}
	Suppose that Assumptions \ref{as:posterior_refer}, \ref{dfn:control}, and \ref{as:objective} are satisfied.
	Let $(\varepsilon_n)_{n\in \mathbb{N}}\subseteq (0,1]$ with $\lim_{n }\varepsilon_n=0$ be such that $\lim_{n} \frac{V(\varepsilon_n)-V(0)}{\varepsilon_n}=\liminf_{\varepsilon \downarrow 0} \frac{V(\varepsilon)-V(0)}{\varepsilon}$. Moreover, let $H^*$ be the optimizer for $V(0)$ (see Proposition \ref{pro:main0}\;(i)). Then for any sequence of $\varepsilon_n^2$-optimizers $H^{\varepsilon_n}$, the following hold:
	\begin{itemize}
		\item [(i)] For every $\beta \geq 1$,
		\begin{eqnarray*}\label{eq:strong_H_eps}
			\lVert H_t^{\varepsilon_n}-H_t^*\rVert_{\mathbb{L}^\beta} \rightarrow 0\quad \mbox{\mbox{as $n\rightarrow \infty$}}.
		\end{eqnarray*}
		\item [(ii)] Denote for each $n\in\mathbb{N}$ by $(Y^{\varepsilon_n},Z^{\varepsilon_n},L^{\varepsilon_n})$ the unique solution of \eqref{eq:BSDE_univ1} with the~terminal condition and generator $(A^{H^{\varepsilon_n}},R^{H^{\varepsilon_n}})=(\partial_{x}f(X_T^{H^{\varepsilon_n};o},l(S_T^o)),(\partial_x g(t,X_t^{H^{\varepsilon_n}},H^{\varepsilon_n}_t))_{t\in[0,T]})$, and by $(Y^{*},Z^{*},L^{*})$ the unique solution of \eqref{eq:BSDE_re_x}. Then, as $n\rightarrow \infty$,
		\begin{eqnarray*}\label{eq:strong_BSDE_eps_re_x}
			\lVert Y^{\varepsilon_n} - Y^* \rVert_{\mathscr{S}^2} + \lVert Z^{\varepsilon_n} - Z^* \rVert_{\mathscr{H}^2}+\lVert L^{\varepsilon_n}- L^* \rVert_{\mathscr{M}^2} \rightarrow 0.     	
		\end{eqnarray*}
		\item [(iii)]
		Denote for each $n\in\mathbb{N}$ by $
		({\cal Y}^{\varepsilon_n},{\cal Z}^{\varepsilon_n},{\cal L}^{\varepsilon_n})$
		the unique solution of \eqref{eq:BSDE_univ3} with the terminal condition $B^{H^{\varepsilon_n}}=\partial_{y}f(X_T^{H^{\varepsilon_n};o},h(S_T^o))\nabla_s l(S_T^o)$, and by $({\cal Y}^*,{\cal Z}^* ,{\cal L}^*)$ the unique solution of \eqref{eq:BSDE_re_y}. Then, for every $i=1,\dots,d$, as $n\rightarrow \infty$,
		\begin{eqnarray*}\label{eq:strong_BSDE_eps_re_y}
			\quad\quad\lVert {\cal Y}^{\varepsilon_n,i} - {\cal Y}^{*,i} \rVert_{\mathscr{S}^2} + \lVert {\cal Z}^{\varepsilon_n,i} - {\cal Z}^{*,i} \rVert_{\mathscr{H}^2}+\lVert {\cal L}^{\varepsilon_n,i}- {\cal L}^{*,i} \rVert_{\mathscr{M}^2} \rightarrow 0. 
		\end{eqnarray*}
	\end{itemize}
\end{lem}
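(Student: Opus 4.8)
Here is the plan. Part (i) is the crux; once it is available, parts (ii) and (iii) follow immediately from the stability result Lemma \ref{lem:stability_1}. For (i) the plan proceeds in three steps: first show that $(H^{\varepsilon_n})_{n\in\mathbb{N}}$ is a minimizing sequence for the baseline problem $V(0)$; then use the strong convexity of $f$ in its first variable together with the convexity of $g$ to deduce that the terminal controlled wealths $X_T^{H^{\varepsilon_n};o}$ converge to $X_T^{H^*;o}$ in $L^2$; and finally invoke Lemma \ref{lem:stability} to upgrade this to $\mathbb{L}^\beta$-convergence of the controls themselves. Note that the particular choice of the sequence $(\varepsilon_n)_{n}$ in the statement plays no role: the argument works verbatim for any null sequence in $(0,1]$.

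For the first step, since $(b^o,\sigma^o)\in\mathcal{B}^{\varepsilon_n}$ for every $n$, one has ${\cal V}(H,\varepsilon_n)\geq {\cal V}(H,0)$ for all $H\in{\cal A}$. Combining this with the defining property \eqref{eq:en_optimizer} of the $\varepsilon_n^2$-optimizer $H^{\varepsilon_n}$ and the optimality of $H^*$ gives the sandwich
\begin{align*}
	V(0)={\cal V}(H^*,0)\leq {\cal V}(H^{\varepsilon_n},0)\leq {\cal V}(H^{\varepsilon_n},\varepsilon_n)< V(\varepsilon_n)+\varepsilon_n^2 .
\end{align*}
By Lemma \ref{lem:up_bound} we have $V(\varepsilon_n)\to V(0)$, so letting $n\to\infty$ yields ${\cal V}(H^{\varepsilon_n},0)\to V(0)$. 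For the second step, set $G^n:=\tfrac12\big(H^{\varepsilon_n}+H^*\big)\in{\cal A}$, which is admissible because ${\cal K}(\omega,t)$ is convex (Assumption \ref{dfn:control}\;(ii)). By linearity of $H\mapsto (H\cdot S^o)$ one has $X_t^{G^n;o}=\tfrac12 X_t^{H^{\varepsilon_n};o}+\tfrac12 X_t^{H^*;o}$ and $G_t^{n}=\tfrac12 H_t^{\varepsilon_n}+\tfrac12 H_t^*$. Using convexity of $g(\omega,t,\cdot,\cdot)$ and the $\underline C_2$-strong convexity of $x\mapsto f(\omega,x,y)$ (Assumption \ref{as:objective}\;(iv)), and taking expectations (all terms being finite by Remark \ref{rem:objective}\;(i) and Lemma \ref{lem:p_integrability}), one obtains
\begin{align*}
	{\cal V}(G^{n},0)\leq \tfrac12{\cal V}(H^{\varepsilon_n},0)+\tfrac12{\cal V}(H^*,0)-\tfrac{\underline C_2}{8}\,\mathbb{E}\Big[\big|X_T^{H^{\varepsilon_n};o}-X_T^{H^*;o}\big|^2\Big].
\end{align*}
Since ${\cal V}(G^{n},0)\geq V(0)={\cal V}(H^*,0)$, rearranging gives
\begin{align*}
	\tfrac{\underline C_2}{8}\,\mathbb{E}\Big[\big|X_T^{H^{\varepsilon_n};o}-X_T^{H^*;o}\big|^2\Big]\leq \tfrac12\big({\cal V}(H^{\varepsilon_n},0)-V(0)\big)\longrightarrow 0,
\end{align*}
so $X_T^{H^{\varepsilon_n};o}\to X_T^{H^*;o}$ in $L^2({\cal F}_T;\mathbb{R})$, and in particular in $\mathbb{P}$-probability.

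Part (i) then follows by applying Lemma \ref{lem:stability} to the sequence $(H^{\varepsilon_n})_{n}$ with $H^\star=H^*$. For parts (ii) and (iii), the in-probability convergence of $X_T^{H^{\varepsilon_n};o}$ to $X_T^{H^*;o}$ allows us to apply Lemma \ref{lem:stability_1}\;(i) and (ii) with $H^n=H^{\varepsilon_n}$; since \eqref{eq:BSDE_re_x} and \eqref{eq:BSDE_re_y} are precisely \eqref{eq:BSDE_univ1} and \eqref{eq:BSDE_univ3} for the data attached to $H^*$, uniqueness identifies the limiting triplets with $(Y^*,Z^*,L^*)$ and $({\cal Y}^*,{\cal Z}^*,{\cal L}^*)$, and the claimed convergences in $\mathscr{S}^2\times\mathscr{H}^2\times\mathscr{M}^2$ follow. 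The only genuinely non-routine point is the second step: it is exactly where strong convexity of $f$ (rather than mere convexity) is used, and without it one could not pass from convergence of the values ${\cal V}(H^{\varepsilon_n},0)\to V(0)$ to convergence of the optimizers. Everything else is bookkeeping built on results already established.
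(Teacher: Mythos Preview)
Your proof is correct, and its overall architecture matches the paper's: first show $X_T^{H^{\varepsilon_n};o}\to X_T^{H^*;o}$ in probability, then feed this into Lemmas \ref{lem:stability} and \ref{lem:stability_1} to obtain (i)--(iii). Where you differ is in how you extract the $L^2$-convergence of the terminal wealths. The paper performs a second-order Taylor expansion of $g$ and $f$ around $(X^{H^*;o},H^*)$, invokes the first-order optimality condition (Lemma \ref{lem:FOC}) to kill the linear terms, and uses convexity of $g$ plus strong convexity of $f$ on the quadratic remainder to obtain
\[
V(\varepsilon_n)-V(0)\geq \tfrac{1}{2}\underline C_2\,\mathbb{E}\big[\lvert X_T^{H^{\varepsilon_n};o}-X_T^{H^*;o}\rvert^2\big]-\varepsilon_n^2.
\]
You instead use the midpoint trick: set $G^n=\tfrac12(H^{\varepsilon_n}+H^*)$ and exploit the strong-convexity inequality $f(\tfrac{x_1+x_2}{2},y)\leq \tfrac12 f(x_1,y)+\tfrac12 f(x_2,y)-\tfrac{\underline C_2}{8}(x_1-x_2)^2$ together with ordinary convexity of $g$, bypassing Lemma \ref{lem:FOC} entirely. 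Your route is slightly more elementary in that it does not require the separate verification of the first-order condition (and the dominated-convergence bookkeeping behind it); the paper's route gives a marginally sharper constant and connects more directly to the variational structure, but both are standard devices for showing that minimizing sequences of strongly convex problems converge. Your remark that the particular choice of $(\varepsilon_n)$ is irrelevant here is also correct and applies equally to the paper's argument.
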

\begin{proof}
	We start by proving (i). Since $H^*$ is optimal for $V(0)$ and $H^{{\varepsilon}_n}$ is $\varepsilon_n^2$-optimal for $V(\varepsilon_n)$ (see \eqref{eq:en_optimizer}). Then for every $t\in[0,T]$ set
	\begin{align*}
		\Delta X_t^{\varepsilon_n}:=X_t^{H^{\varepsilon_n};o}-X_t^{H^{*};o},\qquad \Delta H^{\varepsilon_n}_t :=H^{\varepsilon_n}_t-H^{*}_t.
	\end{align*}
	The second-order Taylor expansions of $g(t,\,\cdot\,,\,\cdot)$ around $(X_t^{H^*;o},H_t^*)$ for each $t\in[0,T]$ and of $f(\cdot\,,l(S_T^o))$ around $X_T^{H^*;o}$ show that
	\begin{align*}
		\begin{aligned}
			&V(\varepsilon_n) - V(0) + \varepsilon_n^2 \\
			&\;  \geq \mathbb{E}\bigg[\int_0^T\Big( g(t,X_t^{H^{\varepsilon_n};o},H^{\varepsilon_n}_t)-g(t,X_t^{H^{*};o},H^{*}_t)\Big)dt+f\big(X_T^{H^{\varepsilon_n};o},l(S_T^o)\big)-f\big(X_T^{H^*;o},l(S_T^o)\big) \bigg] \\
			&\; =   \mathbb{E}\bigg[\int_0^T (\nabla_{x,h}g(t,X_t^{H^{*};o},H^{*}_t))^\top 
			\begin{pmatrix}
				\Delta X_t^{\varepsilon_n}\\
				 \Delta H^{\varepsilon_n}_t
			\end{pmatrix}dt  + \partial_{x}f\big(X_T^{H^*;o},l(S_T^o)\big) \Delta X_T^{\varepsilon_n} + \int_0^T \operatorname{IV}_tdt  +\operatorname{V} \bigg],
		\end{aligned}
	\end{align*}
	where for each $t\in[0,T]$ $\operatorname{IV}_t$ and $\operatorname{V}$ are given by 
	\begin{align*}
		\operatorname{IV}_t&:= (\Delta X_t^{\varepsilon_n},(\Delta H^{\varepsilon_n}_t)^\top)\int_0^1 (1-\theta) D_{x,h}^2g\big(t,X_t^{H^*;o}+\theta \Delta X_t^{\varepsilon_n} , H_t^*+\theta \Delta H_t^{\varepsilon_n}\big) d\theta \begin{pmatrix}
			\Delta X_t^{\varepsilon_n}\\
			\Delta H^{\varepsilon_n}_t
		\end{pmatrix},\\
		\operatorname{V}&:= \big\lvert \Delta X_T^{\varepsilon_n}\big\rvert^2 \int_0^1 (1-\theta) \partial_{xx} f\left(X_T^{H^*;o}+\theta \Delta X_T^{\varepsilon_n},l(S_T^o) \right)d\theta.
	\end{align*}
	By the convexity of $g$ in $(x,h)$ and the strong convexity of $f$ in $x$ (see Assumption \ref{as:objective}\;(iv)), 
	it holds that $\int_0^T \operatorname{IV}_tdt \geq 0$ and $\operatorname{V}\geq  \frac{1}{2}\underline C_{2}\big\lvert \Delta X_T^{\varepsilon_n}\big\rvert^2$.
	
	Moreover,  using the first-order optimality of $H^*$ (see Lemma \ref{lem:FOC}),
	\[
	V(\varepsilon_n) - V(0)\geq \frac{1}{2}\underline C_{2}\mathbb{E}\Big[ \lvert \Delta X_T^{\varepsilon_n} \rvert^2 \Big] -\varepsilon_n^2.
	\]
	
	Since $V(\varepsilon_n)\rightarrow V(0)$ as $n\rightarrow \infty$ (by Lemma \ref{lem:up_bound}), we have that $\mathbb{E}[ \lvert  \Delta X_T^{\varepsilon_n} \rvert^2]\rightarrow 0$ as $n\rightarrow \infty$; in particular
	\[
		\big\lvert  \Delta X_T^{\varepsilon_n}  \big\rvert=\big\lvert X_T^{H^{\varepsilon_n};o} - X_T^{H^*;o} \big\rvert\xrightarrow[]{\mathbb{P}} 0\quad  \mbox{as $n\rightarrow \infty$}.
	\]
	Thus, an application of Lemma \ref{lem:stability} implies (i). 
	
	Finally, Lemma \ref{lem:stability_1} ensures that (ii) and (iii) hold, which completes the proof.
\end{proof}

\begin{proof}[Proof of Theorem \ref{thm:main}] 
	First note that Lemma \ref{lem:up_bound} immediately implies the upper bound
	\[
	\limsup_{\varepsilon\downarrow 0} \frac{V(\varepsilon)-V(0)}{\varepsilon}\leq \gamma \lVert Y^*H^*+ {{\cal Y}}^* \rVert_{\mathbb{L}^q} + \eta \lVert Z^*(H^*)^\top +  {{\cal Z}}^*\rVert_{\mathbb{H}^q}.
	\]
	Thus, all that is left to do is to prove the corresponding lower bound.
	That will be achieved in three steps.
	
	\vspace{0.5em}
	\noindent
	\emph{Step 1:}	
	Let $(\varepsilon_n)_{n\in \mathbb{N}} \subseteq (0,1]$ satisfy that $\lim_{n}\varepsilon_n=0$ and $\lim_{n} \frac{V(\varepsilon_n)-V(0)}{\varepsilon_n}=\liminf_{\varepsilon \downarrow 0} \frac{V(\varepsilon)-V(0)}{\varepsilon}$.  Moreover, for each $n\in\mathbb{N}$, let $H^{\varepsilon_n}$ be an $\varepsilon_n^2$-optimizer of $V(\varepsilon_n)$. Let $(Y^{\varepsilon_n},Z^{\varepsilon_n},L^{\varepsilon_n})$ and $({\cal Y}^{\varepsilon_n},{\cal Z}^{\varepsilon_n},{\cal L}^{\varepsilon_n})$ be the unique solutions of \eqref{eq:BSDE_univ1} and \eqref{eq:BSDE_univ3}, 	respectively, with $(A^{H^{\varepsilon_n}},R^{H^{\varepsilon_n}})=(\partial_{x}f(X_T^{H^{\varepsilon_n};o},l(S_T^o)),(\partial_x g(t,X_t^{H^{\varepsilon_n};o},H_t^{\varepsilon_n}))_{t\in[0,T]})$ and $B^{H^{\varepsilon_n}}=\partial_{y}f(X_T^{H^{\varepsilon_n};o},l(S_T^o))\nabla_s l(S_T^o)$.
	
	Then, by Lemma \ref{lem:low_bound},
	\begin{align*}
		\begin{aligned}
			\lim_{n\rightarrow \infty} \frac{V(\varepsilon_n)-V(0)}{\varepsilon_n}
			&\geq \liminf_{n\rightarrow \infty}\left(\gamma \lVert Y^{\varepsilon_n} H^{\varepsilon_n}+{{\cal Y}}^{\varepsilon_n} \rVert_{\mathbb{L}^q} +\eta \lVert Z^{\varepsilon_n} (H^{\varepsilon_n})^\top+{{\cal Z}}^{\varepsilon_n} \rVert_{\mathbb{H}^q} \right)\\
			&\geq \gamma \lVert Y^{*} H^{*} +{{\cal Y}}^{*}\rVert_{\mathbb{L}^q} +\eta \lVert Z^{*} (H^{*})^\top +{{\cal Z}}^{*}\rVert_{\mathbb{H}^q}\\
			&\quad - \limsup_{n\rightarrow \infty}\Big(\gamma \lVert  Y^{\varepsilon_n} H^{\varepsilon_n} -Y^{*} H^{*} \rVert_{\mathbb{L}^q} + \eta \lVert Z^{\varepsilon_n} (H^{\varepsilon_n})^\top-Z^{*} (H^{*})^\top \rVert_{\mathbb{H}^q} \Big.\\
			&\hspace{6.em}+\Big.\gamma \lVert {{\cal Y}}^{\varepsilon_n}-{{\cal Y}}^{*}\rVert_{\mathbb{L}^q} + \eta \lVert {{\cal Z}}^{\varepsilon_n}-{{\cal Z}}^{*} \rVert_{\mathbb{H}^q} \Big).
		\end{aligned}
	\end{align*}
	Therefore, it remains to show that 
	\begin{align}\label{eq:sup_conv_zero_x}
		\limsup_{n\rightarrow \infty}\Big(\gamma \lVert  Y^{\varepsilon_n} H^{\varepsilon_n} -Y^{*} H^{*} \rVert_{\mathbb{L}^q} + \eta \lVert Z^{\varepsilon_n} (H^{\varepsilon_n})^\top-Z^{*} (H^{*})^\top \rVert_{\mathbb{H}^q} \Big)&=0,\\
		\label{eq:sup_conv_zero_y}
		\limsup_{n\rightarrow \infty}\left(\gamma \lVert {{\cal Y}}^{\varepsilon_n}-{{\cal Y}}^{*}\rVert_{\mathbb{L}^q} + \eta \lVert {{\cal Z}}^{\varepsilon_n}-{{\cal Z}}^{*} \rVert_{\mathbb{H}^q} \right)&=0.
	\end{align}
	
	\vspace{0.5em}
	\noindent \emph{Step 2: proof of \eqref{eq:sup_conv_zero_x}.}
	Recall that $\lvert H_t^* \rvert, \lvert H_t^{\varepsilon_n}\rvert  \leq K$ $\mathbb{P}\otimes dt$-a.e..\
	Hence, by the triangle inequality (and using that $\gamma,\eta\leq 1$),
	\begin{align*}
			&\gamma \lVert Y^{\varepsilon_n} H^{\varepsilon_n}- {Y}^* {H}^* \rVert_{\mathbb{L}^q} +\eta \lVert Z^{\varepsilon_n} (H^{\varepsilon_n})^\top - Z^*(H^*)^\top \rVert_{\mathbb{H}^q} \\
			&\leq \lVert  ( Y^{\varepsilon_n } -{Y}^*) {H}^*  \rVert_{\mathbb{L}^q}+ \lVert Y^{\varepsilon_n}  (H^{\varepsilon_n}- {H}^*) \rVert_{\mathbb{L}^q} +\lVert  ( Z^{\varepsilon_n } -{Z}^*) ({H}^*)^\top  \rVert_{\mathbb{H}^q}+ \lVert Z^{\varepsilon_n}  (H^{\varepsilon_n}- {H}^*)^\top \rVert_{\mathbb{H}^q} \\
			& \leq  K \Big( \lVert Y^{\varepsilon_n}- Y^* \rVert_{\mathscr{S}^q}+  \lVert Z^{\varepsilon_n } -{Z}^* \rVert_{\mathscr{H}^q}\Big)+ \mathbb{E}\left[ \sup_{t\in [0,T]} \lvert Y_t^{\varepsilon_n} \rvert^q \int_0^T \lvert H_t^{\varepsilon_n} - {H}_t^* \rvert^q dt \right]^{\frac{1}{q}}+ \\
			&\quad + \lVert Z^{\varepsilon_n}  (H^{\varepsilon_n}- {H}^*)^\top \rVert_{\mathbb{H}^q}
	\\
	&=: \mathrm{I}^n + \mathrm{ II}^n + \mathrm{III}^n.
	\end{align*}
	We will show that $\mathrm{I}^n, \mathrm{II}^n, \mathrm{III}^n$ vanish as $n\rightarrow \infty$. 
	
	\vspace{0.5em}
		\noindent
\emph{Step 2, limit of $\mathrm{I}^n$} : 
	Note that $1<q=\frac{p}{p-1}<2$ since $p>3$.
	Hence  Lemma \ref{lem:strong_H}\;(ii)  implies that $\mathrm{I}^n\to 0$ as $n\rightarrow \infty$, as claimed.
	
	\vspace{0.5em}
		\noindent
\emph{Step 2, limit of $\mathrm{II}^n$} :
	Let $v>1$ satisfy $1<vq<2$. 
	Then by H\"older's inequality (with exponent $v>1$) and Jensen's inequality (noting that $x\rightarrow |x|^{\frac{v}{v-1}}$ is convex), 
	\begin{align*}
		\begin{aligned}
			\mathrm{II}^n &\leq  \left\| Y^{\varepsilon_n} \right\|_{\mathscr{S}^{vq}} \mathbb{E} \Bigg[\Big(\int_0^T \lvert H_t^{\varepsilon_n} - {H}_t^* \rvert^{q} dt \Big)^{\frac{v}{v-1}}\Bigg]^{\frac{v-1}{qv}}\\
			&\leq  \left\| Y^{\varepsilon_n} \right\|_{\mathscr{S}^{vq}} T^{\frac{1}{qv}} \lVert H^{\varepsilon_n} - {H}^* \rVert_{\mathbb{L}^{\frac{qv}{v-1}}}.
		\end{aligned}
	\end{align*}
    Lemma \ref{lem:strong_H}\;(i) ensures that $\lVert H^{\varepsilon_n} - {H}^* \rVert_{\mathbb{L}^{\frac{qv}{v-1}}}\rightarrow 0$ $\mbox{as $n\rightarrow \infty$}.$     
    Furthermore, by the triangle inequality, for every $n\in\mathbb{N}$,
	\[
	\left\| Y^{\varepsilon_n} \right\|_{\mathscr{S}^{vq}}\leq \left\| Y^{\varepsilon_n}-Y^* \right\|_{\mathscr{S}^{vq}}+	\left\| Y^* \right\|_{\mathscr{S}^{vq}}.
	\]
	Note that $	\left\| Y^* \right\|_{\mathscr{S}^{vq}}<\infty$ since $Y^*\in \mathscr{S}^2(\mathbb{R})$ and $vq<2$. 
	Moreover, by Lemma~\ref{lem:strong_H}\;(ii) and H\"older's inequality (with exponent $\frac{2}{vq}>1$), we have that 
	\[
	0 \leq  \lim_{n\rightarrow \infty}\left\| Y^{\varepsilon_n}-Y^* \right\|_{\mathscr{S}^{vq}} \leq  \lim_{n\rightarrow \infty}\lVert Y^{\varepsilon_n}-Y^* \rVert_{\mathscr{S}^2}=0.
	\]
	Therefore, we conclude that  $\mathrm{ II}^n\to 0$ as $n\to\infty$. 
	

	\vspace{0.5em}
		\noindent
\emph{Step 2. limit of $\mathrm{III}^n$} : 
	By the triangle inequality and Jensen's inequality (noting that $x\rightarrow |x|^{\frac{2}{q}}$ is convex), 
	\begin{align}\label{eq:estimate_6}
	   \begin{aligned}
	   		\mathrm{III}^n
	   		&\leq  \lVert (Z^{\varepsilon_n}-Z^*)  (H^{\varepsilon_n}- {H}^*)^\top \rVert_{\mathbb{H}^q} +  \lVert Z^{*}  (H^{\varepsilon_n}- {H}^*)^\top \rVert_{\mathbb{H}^q}\\
	   		& \leq  \mathbb{E}\Bigg[\int_0^T \lvert  Z_t^{\varepsilon_n }- Z_t^*\rvert^2 \lvert H_t^{\varepsilon_n} - H_t^* \rvert^2 dt  \Bigg]^{\frac{1}{2}} +  \mathbb{E}\Bigg[\int_0^T \lvert  Z_t^{*}\rvert^2 \lvert H_t^{\varepsilon_n} - H_t^* \rvert^2 dt \Bigg]^{\frac{1}{2}}.
	   \end{aligned}
   \end{align}
   Moreover, since  $\lvert H_t^* \rvert, \lvert H_t^{\varepsilon_n}\rvert  \leq K$ $\mathbb{P}\otimes dt$-a.e., it follows that
	\begin{align*}
		\begin{aligned}
			\mathbb{E}\left[\int_0^T \lvert  Z_t^{\varepsilon_n }- Z_t^*\rvert^2 \lvert H_t^{\varepsilon_n} - H_t^* \rvert^2 dt  \right]^{\frac{1}{2}}
			&\leq  2K \lVert Z^{\varepsilon_n}-Z^*\rVert_{\mathscr{H}^2}
			\to 0,
		\end{aligned}
	\end{align*}
	as $n\rightarrow\infty$, where we used Lemma \ref{lem:strong_H}\;(ii) in the last step.
	Therefore, it remains to show that the last term in \eqref{eq:estimate_6} vanishes when $n\rightarrow \infty$. 
	To that end, we note that since $\lVert H^{\varepsilon_n} - H^* \rVert_{\mathbb{L}^2}\rightarrow 0$ as $n\rightarrow \infty$ (by Lemma \ref{lem:strong_H}\;(i)), the continuous mapping theorem implies that
	$$
	\lvert Z_t^*\rvert^2 \lvert H_t^{\varepsilon_n} - H_t^* \rvert^2  \xrightarrow[]{\mathbb{P}\otimes dt} 0\quad \mbox{as $n\rightarrow \infty$}.
	$$
	Finally since $Z^*\in \mathscr{H}^2(\mathbb{R}^d)$ and $\lvert H^{\varepsilon_n}_t-H^*_t \rvert \leq 2K$ $\mathbb{P}\otimes dt$-a.e., the dominated convergence theorem guarantees that $\mathbb{E}[\int_0^T \lvert  Z_t^{*}\rvert^2 \lvert H_t^{\varepsilon_n} - H_t^* \rvert^2 dt ]^{\frac{1}{2}}\rightarrow 0$ as $n\rightarrow \infty$. 
	
	\vspace{0.5em}
		\noindent
\emph{Step 3: proof of \eqref{eq:sup_conv_zero_y}.}
    By H\"older's inequality (with exponent $\frac{2}{q}>1$), 
	\begin{align*}
		\begin{aligned}
			& \lVert {{\cal Y}}^{\varepsilon_n}-{{\cal Y}}^{*}\rVert_{\mathbb{L}^q}+ \lVert {{\cal Z}}^{\varepsilon_n}-{{\cal Z}}^{*} \rVert_{\mathbb{H}^q} \\
			&\quad = \mathbb{E}\Bigg[\int_0^T \Big(\sum_{i=1}^d\big({\cal Y}_t^{\varepsilon_n,i}- {\cal Y}_t^{*,i} \big)^2\Big)^{\frac{q}{2}} dt  \Bigg]^{\frac{1}{q}}+ \mathbb{E}\Bigg[ \Big(\int_0^T\sum_{i=1}^d\big\lvert  {\cal Z}_t^{\varepsilon_n,i }- {\cal Z}_t^{*,i} \big \rvert^2 dt \Big)^{\frac{q}{2}} \Bigg]^{\frac{1}{q}} \\
			&\quad \leq  T^{\frac{1}{q}-\frac{1}{2}} \mathbb{E}\Bigg[\sum_{i=1}^d \int_0^T \big({\cal Y}_t^{\varepsilon_n,i}- {\cal Y}_t^{*,i} \big)^2 dt  \Bigg]^{\frac{1}{2}}+ \mathbb{E}\Bigg[ \sum_{i=1}^d\int_0^T\big\lvert {\cal Z}_t^{\varepsilon_n,i}- {\cal Z}_t^{*,i} \big \rvert^2 dt  \Bigg]^{\frac{1}{2}}
			=:\mathrm{IV}^n. 
		\end{aligned}
	\end{align*}
	Moreover, it follows from the power triangle inequality that
	\begin{eqnarray*}
		\begin{aligned}
			\mathrm{IV}^n&\leq T^{\frac{1}{q}-\frac{1}{2}} d^{\frac{1}{2}} \sum_{i=1}^d \Bigg(  \mathbb{E}\Bigg[ \int_0^T \big( {\cal Y}_t^{\varepsilon_n,i}- {\cal Y}_t^{*,i} \big)^2 dt   \Bigg]^{\frac{1}{2}}+  \lVert {\cal Z}^{\varepsilon_n,i}- {\cal Z}^{*,i} \rVert_{\mathscr{H}^2}\Bigg)\\
			&\leq T^{\frac{1}{q}-\frac{1}{2}} d^{\frac{1}{2}}\sum_{i=1}^d  \left( T^{\frac{1}{2}}\lVert {\cal Y}^{\varepsilon_n,i}- {\cal Y}^{*,i}  \rVert_{\mathscr{S}^2} +  \lVert {\cal Z}^{\varepsilon_n,i}- {\cal Z}^{*,i} \rVert_{\mathscr{H}^2} \right).  
		\end{aligned}
	\end{eqnarray*}
	Combined with Lemma \ref{lem:strong_H}\;(iii), this estimate ensures that  \eqref{eq:sup_conv_zero_y} holds. 
	The proof is complete.
\end{proof}

\section{Remaining proofs}
\label{sec:remaining.proofs}

\begin{proof}[Proof of Lemma \ref{lem:posterior_refer2}]
	We start by proving \textrm{(i).} Since $\lvert b_t^o \rvert +\lVert \sigma_t^o \rVert_{\operatorname{F}} \leq C_{{b},{\sigma}}$ $\mathbb{P}\otimes dt$-a.e., 
	\[
	\lVert b^o \rVert_{\mathbb{L}^p}^p = \mathbb{E}\left[\int_0^T \lvert b_t^o \rvert^p dt\right] \leq (C_{b,\sigma})^p T, \quad \lVert \sigma^o \rVert_{\mathbb{H}^p}^p = \mathbb{E}\Bigg[\Bigg(\int_0^T\lVert \sigma_t^o \rVert_{\operatorname{F}}^2dt\Bigg)^{\frac{p}{2}}\Bigg] \leq (C_{b,\sigma})^p T^{\frac{p}{2}}.
	\] 
	This ensures that Assumption \ref{as:posterior_refer}\;(i) holds. 
	
	Next, we claim that Assumption \ref{as:posterior_refer}\;(ii) holds. \;Note that from the uniform ellipticity condition on $(\sigma^o)^\top \sigma^o$, it follows that there exists some constant $C_{{c}}>0$ such that $y^\top (\sigma_t^o)^\top \sigma_t^o y \geq \frac{1}{C_{{c}}}\lvert y \rvert^2$ $\mathbb{P}\otimes dt$-a.e. for every $y\in \mathbb{R}^d$, hence
	$$
	\lvert (\sigma_t^o)^{-1} y \rvert \leq  \sqrt{C_{{c}}} \lvert y \rvert\;\;\mbox{$\mathbb{P}\otimes dt$-a.e.},\;\; \mbox{for every $y\in\mathbb{R}^d.$}
	$$
	In particular, using the uniform boundedness of $b^o$, it follows that
	\begin{align}\label{eq:bdd_novikov}
		\frac{1}{2}\int_0^T \lvert (\sigma^o_u)^{-1}{b}_u^o \rvert^2 du\leq \frac{1}{2}\int_0^T C_{{c}} \lvert {b}_u^o\rvert^2 du \leq \frac{T}{2} C_{{b},\sigma}^2 C_{{c}}<\infty\;\;\mbox{$\mathbb{P}$-a.s.}.
	\end{align}
	Thus, $\int_0^\cdot((\sigma^o_u)^{-1}{b}_u^o )^\top d{W}_u$ is well-defined and ${\cal D}$ satisfies ${\cal D}_\cdot=1-\int_0^\cdot{\cal D}_u(( \sigma^o_u)^{-1}{b}_u^o )^\top d{W}_u$ showing that ${\cal D}$ is a continuous, $(\mathbb{F},\mathbb{P})$-local martingale.
	
	Moreover, \eqref{eq:bdd_novikov} clearly implies that $\mathbb{E}[\exp(\frac{1}{2}\int_0^T \lvert (\sigma^o_u)^{-1}{b}_u^o \rvert^2 du)]<\infty.$ 
	Hence, Novikov's condition in \cite[Proposition 3.5.12, p.198]{KS1991} ensures that ${\cal D}$ is a strictly positive $(\mathbb{F},\mathbb{P})$-martingale.
	
	We proceed to show that ${\cal D}_T\in L^\beta({\cal F}_T;\mathbb{R})$ for every $\beta \geq 1$. 
	Indeed, using \eqref{eq:bdd_novikov}, we have that for every $\beta \geq1$, 
	\begin{align}\label{eq:holder_exp}
		\begin{aligned}
			\mathbb{E}[{\cal D}_T ^\beta] &= \mathbb{E}\Bigg[{\cal E} \left(-\beta \big((\sigma^o)^{-1}{b}^o\big) \cdot {W}\right)_T\exp\bigg(\bigg(\frac{\beta^2}{2}-\frac{\beta}{2}\bigg)\int_0^T \lvert (\sigma^o_t)^{-1}{b}_t^o \rvert^2 dt  \bigg)\Bigg]\\
			&\leq \mathbb{E}\Bigg[{\cal E} \left(-\beta \big((\sigma^o)^{-1}{b}^o\big) \cdot {W}\right)_T\Bigg]\exp\bigg(\bigg(\frac{\beta^2}{2}-\frac{\beta}{2}\bigg){T} C_{{b},\sigma}^2 C_{{c}}  \bigg).
		\end{aligned}
	\end{align}
	Since ${\cal E} (-\beta ((\sigma^o)^{-1}{b}^o) \cdot {W})_T$ is a nonnegative continuous $(\mathbb{F},\mathbb{P})$-local martingale, it is an $(\mathbb{F},\mathbb{P})$-supermartingale and hence integrable.
		 This together with \eqref{eq:holder_exp} ensures that ${\cal D}_T\in L^\beta({\cal F}_T;\mathbb{R})$, for every $\beta \geq 1$. 
	
	\vspace{0.5em}
	Now let us prove \textrm{(ii).} Since $b_t^o= \widetilde{b}^o(t,S_t^o),$ $\sigma_t^o= \widetilde{\sigma}^o(t,S_t^o)$ $\mathbb{P}\otimes dt$-a.e., $S^o$ given in \eqref{eq:posterior_semi_ito} is driven by the following stochastic differential equation (SDE):
	\begin{align}\label{eq:posterior_SDE}
		S_t^{o}= s_0+ \int_0^t\widetilde{b}^o(u,S_u^o) du + \int_0^t \widetilde{\sigma}^o(u,S_u^o) dW_u\quad \mbox{$\mathbb{P}$-a.s., $t\in [0,T]$}.
	\end{align}
	Using the Lipschitz and linear growth conditions on $(\widetilde{b}^o,\widetilde{\sigma}^o)$, an application of \cite[Theorem~2.3.1]{Mao2007} shows that the SDE \eqref{eq:posterior_SDE} has a unique solution.  %
	
	Furthermore, since $S_0^o=s_0\in \mathbb{R}$, an application of \cite[Theorem~2.4.1]{Mao2007} shows that there is some $C>0$ (depending on $s_0\in\mathbb{R}$, $p> 3$ and $T>0$) which satisfies
	\[
	\mathbb{E} \left[\lvert S_t^o \rvert^p\right] \leq C, \quad \mbox{for every $t\in[0,T]$}.
	\]
	Therefore, the linear growth condition on $\widetilde{b}^o$ (with the constant $C_{\widetilde{b},\widetilde{\sigma}}>0$), and the power triangle inequality implies
	\[
	\lVert b^o \rVert_{\mathbb{L}^p}^p
	\leq (C_{\widetilde{b},\widetilde{\sigma}}2)^p\mathbb{E}\bigg[\int_0^T (1+\lvert S_t^o  \rvert^p) dt \bigg] \leq (C_{\widetilde{b},\widetilde{\sigma}}2)^p T  \left(1 + C\right)<\infty.
	\]
	
	Furthermore, using the same arguments as above and Jensen's inequality (noting that $x\rightarrow |x|^{\frac{p}{2}}$ is convex), we have
	\begin{align*}
			\quad \lVert \sigma^o \rVert_{\mathbb{H}^p}^p \leq (C_{\widetilde{b},\widetilde{\sigma}}2)^p \mathbb{E}\Bigg[\bigg(\int_0^T (1+\lvert S_t^o  \rvert^2) dt\bigg)^{\frac{p}{2}} \Bigg]\leq (C_{\widetilde{b},\widetilde{\sigma}}2)^p T^{\frac{p}{2}-1} 2^{\frac{p}{2}} \mathbb{E}\Bigg[\int_0^T (1+\lvert S_t^o  \rvert^p) dt \Bigg]<\infty.
	\end{align*}
	Hence Assumption \ref{as:posterior_refer}\;(i) holds. 
	
	Assumption \ref{as:posterior_refer}\;(ii) follows from the Bene\v{s} condition \cite{Benes1971}, i.e., $(\sigma^o_t)^{-1} {b}_t^o= \theta(t,{W})$ $\mathbb{P}\otimes dt$-a.e.. Indeed, by \cite[Corollary 3.5.16]{KS1991}, ${\cal D}$ is a strictly positive $(\mathbb{F},\mathbb{P})$-martingale. Moreover, the condition that ${\cal D}_T\in L^\beta({\cal F}_T;\mathbb{R})$, for every $\beta \geq 1$ follows from \cite[Corollary 2]{GM2003}. This completes the proof.
\end{proof}

\begin{proof}[Proof of Lemma \ref{lem:set_value_ex}]
	We start by proving (i). For every $(\omega,t,v)\in\Omega\times[0,T]\times \mathbb{R}^d$, set 
		\[
		\mathfrak{F}(\omega,t,v):= \big |c_t^{\operatorname{center}}(\omega)-v \big|- r^{\operatorname{radius}}_t(\omega).
		\]
		Then $\mathfrak{F}(\cdot,\cdot,v)$ is $\mathbb{F}$-predictable for every $v\in \mathbb{R}^d$ because $(c^{\operatorname{center}}_t)_{t\in[0,T]}$ and $(r^{\operatorname{radius}}_t)_{t\in[0,T]}$ are $\mathbb{F}$-predictable. Moreover, $\mathfrak{F}$ is continuous in $v\in \mathbb{R}^d$. Therefore, $\mathfrak{F}:\Omega\times[0,T]\times \mathbb{R}^d\to \mathbb{R}$ is a Carath\'eodory function. Hence, an application of \cite[Proposition~14.33]{rockafellar2009variational} ensures that ${\cal K}^{\operatorname{Ball}}$ is closed-valued and weakly $\mathbb{F}$-{predictably} measurable.
		
		The convexity of the values of ${\cal K}^{\operatorname{Ball}}$ is obvious by its definition and the uniformly-boundedness follows directly from the fact that  $(c^{\operatorname{center}}_t)_{t\in[0,T]}$ and $(r^{\operatorname{radius}}_t)_{t\in[0,T]}$ have bounded values.
		
		\vspace{0.5em}
		\noindent Now let us prove (ii). For every $i=1,\dots,d$, define ${\cal LB}^i: \Omega \times [0,T]\ni (\omega,t)\twoheadrightarrow {\cal LB}^i(\omega,t)\subseteq\mathbb{R}^d$~by
		\begin{align*}
			{\cal LB}^i(\omega,t):= \left\{v=(v^1,\dots,v^d)^\top\in\mathbb{R}^d\,\big|\, \underline{a}^i_t(\omega)-v^i\leq 0\right\}.
		\end{align*}
		Similarly, define by  ${\cal UB}^i: \Omega \times [0,T]\ni (\omega,t)\twoheadrightarrow {\cal UB}^i(\omega,t)=\{v\in\mathbb{R}^d| v^i-\overline{a}^i_t(\omega)\leq 0\}\subseteq\mathbb{R}^d$. Then we have for every $(\omega,t)\in \Omega\times [0,T]$ that
		\begin{align}\label{eq:intersect_Cara}
			\big(\cap^d_{i=1}{\cal LB}^i(\omega,t)\big) \cap \big(\cap^d_{i=1}{\cal UB}^i(\omega,t)\big) ={\cal K}^{\operatorname{Box}}(\omega,t).
		\end{align}
		
		We note that by the $\mathbb{F}$-predictability of $(\underline a_t^i)_{t\in[0,T]}$ and $(\overline a_t^i)_{t\in[0,T]}$, the same arguments as presented for the proof of (i) can be used to show that for every $i=1,\dots,d$ both ${\cal LB}^i$ and ${\cal UB}^i$ are closed-valued and weakly $\mathbb{F}$-{predictably} measurable correspondences.
		
		Moreover, by \eqref{eq:intersect_Cara}, an application of \cite[Proposition 14.11\;(i)]{rockafellar2009variational} ensures that ${\cal K}^{\operatorname{Box}}$ is closed-valued and weakly $\mathbb{F}$-{predictably} measurable. Both the convex-valuedness and uniformly-boundedness of ${\cal K}^{\operatorname{Box}}$  follow from its definition and the bounded values of $(\underline a_t^i)_{t\in[0,T]}$,~$(\overline a_t^i)_{t\in[0,T]}$.
\end{proof}

\begin{proof}[Proof of Lemma \ref{lem:explicit.Y.Z}]
	The expressions for $Y^*$ and ${\cal Y}^*$ follow from the definition of the Galtchouk-Kunita-Watanabe decompositions given in \eqref{eq:BSDE_re_x} and \eqref{eq:BSDE_re_y} by taking conditional expectations.
	
	As for the expressions of $Z^\ast$, recall that   $L^*\in \mathscr{M}^2(\mathbb{R})$ is strongly orthogonal with $W^{i}$ for every $i=1,\dots,d$, see  Proposition \ref{pro:main0}\;(ii) and Lemma \ref{lem:BSDE}.
	Hence, for every $t\in[0,T]$, $\mathbb{P}$-a.s.,
	\begin{align*}
		\begin{aligned}
			d(\langle Y^*, W^{1} \rangle_t,\dots,\langle Y^*, W^{d} \rangle_t)^\top =  Z_{t}^{*} dt+ d(\langle L^*, W_{1}\rangle_t,\dots,\langle L^*, W_{d}\rangle_t)^\top=Z_{t}^* dt,
		\end{aligned}
	\end{align*}
	proving the claimed expression of $Z^*$.
	
	The proof for the expression of ${\cal Z}^{*,i}$ follows from the same arguments (replacing $Z^*$ with ${\cal Z}^{*,i}$ and $Y^*$ with ${\cal Y}^{*,i}$).
\end{proof}

\begin{proof}[Proof of Corollary \ref{cor:regularity}] 
By the regularity assumption on $J$, i.e., $J\in  \mathcal{C}^{1,2,2}$, an application of It\^o's formula ensures that for every $t\in[0,T]$,
	\begin{align*}
		\begin{aligned}
			J(t,X_t^{H^*;o},S_t^o)&=\partial_{x}f\big(X_T^{H^*;o},l(S_T^o)\big)-\int_t^T\big({\cal L}_{r}^1+{\cal L}_{r}^2+{\cal L}_{r}^3  \big)dr\\
			&\quad-\int_t^T\Big(\partial_xJ(r,X_r^{H^*;o},S_r^o)(H_r^*)^\top +(\nabla_{{s}}J)^\top(r,X_r^{H^*;o},S_r^o) \Big)  \sigma_r^o dW_r,
		\end{aligned}
	\end{align*}
	where 
	\begin{itemize}[leftmargin=2em]
		\item [$\;$] ${\cal L}^{1}_r:=\partial_t J(r,X_r^{H^*;o},S_r^o)+\frac{1}{2} \partial_{xx} J (r,X_r^{H^*;o},S_r^o) \lvert (\sigma_r^o)^\top H_r^*   \rvert^2+  \partial_x J(r,X_r^{H^*;o},S_r^o) (H_r^*)^\top b_r^o$;
		\item [$\;$] ${\cal L}^{2}_r:= \sum_{i=1}^d \partial_{x,s_i}J(r,X_r^{H^*;o},S_r^o) ((H_r^*)^\top \sigma_r^o)(\sigma_{r}^{o,i})^\top$;
		\item [$\;$] ${\cal L}^{3}_r:=\frac{1}{2} \operatorname{tr}\big((\sigma_{r}^o)(\sigma_{r}^o)^\top D^2_sJ(r,X_r^{H^*;o},S_r^o)  \big)+ (\nabla_sJ)^\top(r,X_r^{H^*;o},S_r^o) b_r^o $;
	\end{itemize} 
	with $(\sigma_{t}^{o,i})_{t\in[0,T]}$, $i=1,\dots,d$, denoting the $i$-th row vector process of $\sigma^o$, $D^2_sJ$ denoting the Hessian of $J$ with respect to $s$. 
	
	Hence, since $Y_t^* = J\big(t,X_t^{H^*;o},S_t^o\big)$ holds for every $t\in[0,T]$, $\mathbb{P}$-a.s.\;(see \eqref{eq:condi_forms}), by Lemma~\ref{lem:explicit.Y.Z} and the regularity on $J$, we have that for every $t\in[0,T)$, $\mathbb{P}$-a.s.,
	\begin{align*}
		\begin{aligned}
			Z_t^*&= \frac{d}{dt}\Big(\langle J(\;\cdot\;,X^{H^*;o},S^o),W^1\rangle_t,\cdots,\langle  J(\;\cdot\;,X^{H^*;o},S^o),W^{d}\rangle_t\Big)^\top\\
			&=(\sigma_t^o)^\top [\partial_xJ(t,X_t^{H^*;o},S_t^o)H_t^* + \nabla_s J(t,X_t^{H^*;o},S_t^o) ].
		\end{aligned}
	\end{align*}
	
	We can use the same arguments (replacing $Y^*=J(\cdot,X^{H^*;o},S^o)$ with ${Y}^{*,i}={\cal J}^i(\;\cdot\;,X^{H^*;o},S^o)$ and using the regularity assumption on ${\cal J}^i$) to show that the property for ${\cal Z}^{*,i}$ holds for every $i=1,\dots,d$. 
\end{proof}

	

\begin{rem}\label{rem:regularity}
	The regularity assumptions in Corollary \ref{cor:regularity} (see \eqref{eq:condi_forms}) are satisfied if $g$ and\;$f$ are deterministic (hence we can assume that $g$ and $f$ are defined on $[0,T]\times \mathbb{R}\times\mathbb{R}^d$ and~$\mathbb{R}^2$, respectively) and the following conditions (i),\,(ii)-a,\,(iii)-a or (i),\,(ii)-b,\,(iii)-b hold (see \cite[Theorem~3.2]{pardoux2005backward} and \cite[Theorem 5.7.6 and Remark~5.7.8]{KS1991}): 
	\begin{itemize}
		\item [(i)] $\mathbb{F}$ is the completion of the filtration generated by the Brownian motion $W$;
		\item [(ii)-a.] For every $t\in[0,T]$, $\partial_xg(t,\cdot,\cdot)$, $\partial_xf(\cdot,l(\cdot))$ and $\partial_yf(\cdot,l(\cdot))\partial_{s_i}l(\cdot)$, $i=1,\dots,d$, are three times continuously differentiable and their derivatives of order less than or equal to 3 grow at most like a polynomial function of the variable at infinity; 
		\item [(ii)-b.] $(t,x,h)\to \partial_xg(t,x,h)$ is continuous; 
		\item [(iii)-a.] Let $\hat{b}^o:\mathbb{R}^d\rightarrow \mathbb{R}^d$ and $\hat{\sigma}^o:\mathbb{R}^d\rightarrow \mathbb{R}^{d\times d}$ be three times continuously differentiable with bounded first, second, and third derivatives. Assume that $H^*$ given in Proposition~\ref{pro:main0}\;(i) is constant, and $b^o$ and $\sigma^o$ given in \eqref{eq:posterior_semi_ito} satisfy $\mathbb{P}\otimes dt$-a.e., 
		\begin{align*}
			\quad\quad b^o_t=\hat{b}^o(S_t^o),\quad \sigma^o_t=\hat{\sigma}^o(S_t^o);
		\end{align*}
		\item [(iii)-b.] Let $\bar{b}^o:[0,T]\times \mathbb{R}^d\rightarrow \mathbb{R}^d$ be bounded, H\"older-continuous. Moreover, let $\bar{\sigma}^o:[0,T]\times \mathbb{R}^d\rightarrow \mathbb{R}^{d\times d}$ be bounded and satisfy that $\bar{\sigma}^o(\bar{\sigma}^o)^\top:[0,T]\times \mathbb{R}^d\rightarrow \mathbb{R}^{d\times d}$ is H\"older-continuous and satisfies a uniform ellipticity condition, i.e., there is $C_\sigma>0$ such that for every $v\in \mathbb{R}^d$ and $(t,x)\in [0,T]\times \mathbb{R}^d$, $v^\top\bar{\sigma}^o(\bar{\sigma}^o)^\top(t,x)v\geq C_\sigma |v|^2.$ 
		Assume that $H^*$ is deterministic, H\"older-continuous in $[0,T]$, and satisfy 
		$|H_t^*|>0$ for every $t\in[0,T]$, and $b^o$ and $\sigma^o$ satisfy $\mathbb{P}\otimes dt$-a.e., 
		\[
		\quad\quad b^o_t=\bar{b}^o(t,S_t^o),\quad \sigma^o_t=\bar{\sigma}^o(t,S_t^o).
		\]
	\end{itemize}
\end{rem}

\begin{proof}[Proof of Theorem \ref{thm:main2}]
	Since $V^*(0)=V(0)$ and 
	\[V'(0) 
	= \gamma \lVert Y^*H^*+\mathcal{ Y}^* \rVert_{\mathbb{L}^q}   +  \eta \lVert Z^* (H^*)^\top+\mathcal{ Z}^* \rVert_{\mathbb{H}^q}\] (see Theorem \ref{thm:main}), the inequality 
	\[
	V^\ast(\varepsilon)\leq V(0)+\varepsilon V'(0)+O(\varepsilon^2)=V(\varepsilon)+O(\varepsilon^2)
	\]
	as $\varepsilon\downarrow 0$, follows directly from Remark \ref{rem:auxiliary}.~The reverse inequality follows by using the same arguments as for the proof of Lemma \ref{lem:low_bound}, but replacing $H^\varepsilon$ by $H^*$.
\end{proof}



\bibliographystyle{abbrv}
\bibliography{references}
     
\end{document}